\newtheorem{theorem}{Theorem}[section]
\newtheorem{lemma}[theorem]{Lemma}
\newtheorem{corollary}[theorem]{Corollary}
\newtheorem{proposition}[theorem]{Proposition}
\newtheorem{thmx}{Theorem}
\theoremstyle{remark}
\newtheorem{remark}[theorem]{Remark}
\theoremstyle{definition}
\newtheorem{definition}[theorem]{Definition}
\newcommand{\Aut}{\mathrm{Aut}}
\newcommand{\Stab}{\mathrm{Stab}}
\newcommand{\ord}{\mathrm{ord}}
\newcommand{\Fix}{\mathrm{Fix}}
\newcommand{\Cl}{\mathrm{Cl}}
\newcommand{\Alb}{\mathrm{Alb}}
\newcommand{\im}{\mathrm{Im}}
\newcommand{\Sing}{\mathrm{Sing}}
\newcommand{\Inn}{\mathrm{Inn}}      
\newcommand{\Orb}{\mathrm{Orb}}
\title{On Semi-isogenous mixed surfaces} 																											
\author[N. Cancian]{Nicola Cancian}
\author[D. Frapporti]{Davide Frapporti}
\thanks{The authors thank F. Catanese, C. Glei\ss ner and R. Pignatelli for inspiring 
conversations and discussions.\\
Some of the results contained in this paper were developed in Spring 2015 when the 
first author was visiting the University of Bayreuth, he is deeply grateful for the friendly and stimulating atmosphere he found.
The present work took mainly place in the realm of the DFG
Forschergruppe 790 ``Classification of algebraic
surfaces and compact complex manifolds''.
The second author is member of  G.N.S.A.G.A. of I.N.d.A.M.
and acknowledges support of the ERC-advanced Grant 340258-TADMICAMT}
\keywords{Surfaces of general type, finite group actions} 
\subjclass[2000]{14J29, 14L30, 14Q10} 
\date{\today}
\begin{document}
\begin{abstract}
Let $C$ be a smooth projective curve and $G$ a finite subgroup
of $\Aut(C)^2\rtimes \mathbb Z_2$  whose action is \textit{mixed}, i.e.~there are elements in $G$ exchanging the two
isotrivial fibrations of $C\times C$.  
Let  $G^0\triangleleft G$ be the index two subgroup $G\cap\Aut(C)^2$.
If $G^0$ acts freely, then $X:=(C\times C)/G$ is smooth and we call it \textit{semi-isogenous mixed surface}.
In this paper we give an algorithm to determine semi-isogenous mixed surfaces
with given geometric genus, irregularity and self-intersection of the canonical class.
As an application we classify irregular semi-isogenous mixed surfaces with $K^2>0$ and  geometric genus equal to the irregularity; the regular case is subjected to some computational restrictions.

In this way we construct new examples of surfaces of general type with $\chi=1$.
We provide an  example of a minimal surface of general type with $K^2=7$ and $p_g=q=2$.
\end{abstract}
\maketitle

\section*{Introduction}

The study of compact complex projective surfaces of general type is a classical and long-standing research topic.
Despite the intensive effort made to improve our knowledge about surfaces of general type, their classification is still
an open problem. Nevertheless, there are some important inequalities, holding for minimal surfaces of general type $S$, 
 that allow us to deal with them more easily: 
$K_S^2\geq 1$, $\chi(\mathcal O_S)\geq 1$,  the Bogomolov-Miyaoka-Yau inequality $K_S^2\leq 9\chi(\mathcal O_S)$, 
and the Noether inequality $K_S^2\geq 2\chi(\mathcal O_S)-6$;
here and in the following we use the standard notation of the theory of the complex 
surfaces, as in \cite{Beau83, BHPV}.
 For motivation and for the state of the art (few years ago)  we suggest to the reader the survey \cite{CSGT}.

Since up to now a complete classification  seems out of reach, one 
 tries first to understand and classify the boundary cases, which usually are more interesting,
 for example $\chi(\mathcal O_S)=1$.
If this is the case, we have $1=\chi(\mathcal O_S)=1-q(S)+p_g(S)$ and so $p_g(S)=q(S)$.

By the main theorem of \cite{be82}, if $S$ is a minimal surface of general type
with $p_g(S)=q(S)\geq4$, then $S$ is the product of two genus 2 curves and $p_g(S)=q(S)=4$.
The case $p_g(S)=q(S)=3$ is classified too (see \cite{CCML98, Pir02, HP02}):
here either $S$ is  the symmetric square of a genus three curve or 
$S=(F_2 \times F_3)/\nu$ where $F_g$ is a curve of genus $g$ and $\nu$  is an involution  
acting on $F_2$ as an elliptic involution and freely on $F_3$.
It seems that the classification  becomes more complicated as the value of $p_g$ decreases, 
and the case $p_g = q \leq 2$ is still rather unknown.

 In the last years there has been growing interest in those surfaces
birational to the quotient of the product of two curves by the action of a  finite group
and  several new surfaces of general type with $p_g=q$ have been constructed in this way: see
\cite{BC04, BCG08, BCGP12, BP12, BP15,Frap13, FP15} for $p_g=0$, 
\cite{CP09, Pol07, Pol09, MP10,Frap13, FP15} for $p_g=1$,
\cite{pen11, zuc03} for $p_g=2$.  

In all of these articles the authors assume  that the group action  is free outside of a finite set of points.
We call this case {\it quasi-\'etale} since the induced map onto the quotient is quasi-\'etale in the sense of \cite{Cat07}.
 This includes the \textit{unmixed} case, which means that the group action on the product is diagonal,
induced by actions on the factors.

In the present paper we work in a different framework, without the quasi-\'etale assumption.
We consider the following situation.
Let $C$ be a smooth projective curve of genus $g(C)$ and $G$ a finite subgroup
of $\Aut(C)^2\rtimes \mathbb Z_2$  whose action is \textit{mixed}, i.e.~there are elements in $G$ exchanging the two
natural isotrivial fibrations of $C\times C$.  
The quotient surface $X:=(C\times C)/G$ is a \textit{mixed quotient}, and its 
minimal resolution of the singularities $S\to X$ is a \textit{mixed surface}.
We denote by  $G^0\triangleleft G$ the index two subgroup $G\cap\Aut(C)^2$, 
i.e.~the subgroup consisting of those elements that do not exchange the factors.

In general the singularities of $X$ are rather complicated. Assuming
that $G^0$ acts freely, i.e.~$(C\times C)/G^0$ is a surface 
isogenous to a product (cf.~\cite{Cat00}),  then $X$ is smooth  and we call it a \textit{semi-isogenous mixed surface}.
One aim of this paper is to give a systematic way  to classify these surfaces.

Following the strategies of the above mentioned papers, our classification method combines geometry and 
group theory. To each semi-isogenous mixed surface we can associate the 
group $G$ and a generating vector for  $G^0$ (see Definition \ref{gv}). The idea is that 
the geometry of $X$ is encoded in this pair of algebraic data, hence
the problem of constructing surfaces is translated into the problem of finding pairs (groups, generating vector)
subjected to conditions of combinatorial type.

The main result of this paper is an algorithm which, once the integers $p_g$, $q$ and $K^2$ are given, produces all 
semi-isogenous mixed surfaces with those invariants. 
We implemented the  algorithm using the computer algebra software
MAGMA \cite{MAGMA}; the script is available from
\[\mbox{\url{http://www.science.unitn.it/~cancian/}}\,\] 

As an application, running the program for all possible positive values of $K^2$ and $p_g=q$,  we obtained the following Theorems \ref{thmA}, \ref{thmB} and \ref{thmC}.
Note that the program  works for arbitrary values of $K^2$, $p_g$ and $q$, so more surfaces may be produced with it.

\begin{thmx}\label{thmA}
Let $X:=(C\times C)/G$ be a  semi-isogenous mixed surface  with  $p_g(X)=q(X)=0$ and $K^2_X>0$,
such that $|G^0|\leq 2000, \neq 1024 $.
Then either $X$ is $\mathbb P^2$, or $X$  belongs to one of the  $15$ families 
collected in Table $\ref{q0}$ and is of general type.
\end{thmx}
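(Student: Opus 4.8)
The plan is to reduce the geometric classification to the finite combinatorial search performed by the algorithm. By the dictionary established above, every semi-isogenous mixed surface $X=(C\times C)/G$ is encoded by the pair consisting of $G$ and a generating vector for $G^0$ (Definition \ref{gv}), and conversely every admissible such pair produces a surface whose invariants $\chi(\mathcal O_X)$, $q(X)$, $p_g(X)$ and $K^2_X$ are computed from the algebraic data. Accordingly, the first step is to fix the target invariants $p_g=q=0$, hence $\chi(\mathcal O_X)=1$, together with $K^2_X>0$, and to translate them into constraints on the pair $(G,\text{generating vector})$.

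Second, I would bound the search space. Since $\chi(\mathcal O_X)=1$, the surface $X$ is either rational or of general type, and by Bogomolov--Miyaoka--Yau $0<K^2_X\le 9$. Writing $Y:=(C\times C)/G^0$, which is isogenous to a product with $\chi(\mathcal O_Y)=(g(C)-1)^2/|G^0|$ and $K_Y^2=8\chi(\mathcal O_Y)$, the surface $X$ is the quotient of $Y$ by the involution $\iota$ induced by a mixing element. As $X$ is smooth, $\iota$ has no isolated fixed points, and its fixed locus is a smooth curve whose preimage in $C\times C$ is a union of graphs of automorphisms of $C$, each isomorphic to $C$. The invariants of $X$ are therefore explicit functions of $g(C)$, $|G|$ and the signature data, via the double-cover formulas for $\chi(\mathcal O_X)$ and $K^2_X$ together with the Chevalley--Weil computation of $q$ and $p_g$. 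Imposing $\chi(\mathcal O_X)=1$ and $0<K^2_X\le 9$ then forces $(g(C)-1)^2/|G^0|$, and with it $g(C)$ and the admissible Riemann--Hurwitz signatures, into a finite range. The hypothesis $|G^0|\le 2000$, $\ne 1024$, makes the list of candidate groups finite, as they are retrieved from the Small Groups library, and for each group and signature there are only finitely many generating vectors.

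Third comes the enumeration. For each group $G^0$ of order at most $2000$ (other than $1024$), each compatible signature, and each generating vector, I would reconstruct $G$ with its mixing element, verify that $G^0$ acts freely on $C\times C$ (which guarantees that $X$ is smooth and semi-isogenous), and then compute the invariants: $H^0(\Omega^1_X)=(V\oplus V)^G$ and $H^0(\Omega^2_X)=(V\otimes V)^G$ with $V=H^0(C,\Omega^1)$ are obtained from the $G$-representation on $V$, so $q=p_g=0$ becomes a representation-theoretic condition, while $K^2_X$ is read off from the formula relating it to $|G|$, $g(C)$ and the branch data. Keeping exactly the pairs with $p_g=q=0$ and $K^2_X>0$, and grouping the surviving data into classes up to deformation equivalence, one obtains the non-general-type surface $\mathbb P^2$ together with the $15$ families recorded in Table \ref{q0}.

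I expect the main obstacle to be computational rather than conceptual: the number of (group, generating vector) pairs to be tested is enormous, so the real difficulty lies in pruning the search efficiently---exploiting the bounds on $g(C)$ and on the signature to discard most candidate groups and vectors before the costly character computation---and in correctly collating the output into genuine families, that is, recognising when distinct algebraic data yield deformation equivalent, or even isomorphic, surfaces. This is precisely what the MAGMA implementation handles, and the exclusion of $|G^0|=1024$ is forced only by the size of that portion of the Small Groups library, without affecting the structure of the argument.
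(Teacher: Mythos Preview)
Your proposal is essentially the paper's own strategy: encode $X$ by the algebraic data $(G,\text{generating vector})$, bound the search space, and run the MAGMA algorithm. Two small points where the paper proceeds more directly: (i) rather than invoking Chevalley--Weil to compute $q$ and $p_g$, the paper uses the elementary identity $q(X)=g(C/G^0)$ (Proposition~\ref{irreg}) together with Noether's formula, and replaces your BMY bound $K_X^2\le 9$ by the sharper $K_X^2\le 8\chi$ coming from $8\chi-K^2=\delta(\mathcal B)\ge 0$; (ii) the surface $\mathbb P^2$ does \emph{not} emerge from the algorithm---it arises from the separate case $g(C)\le 1$ treated in Section~\ref{class01}, while the algorithm only handles $g(C)\ge 2$, so you should split off that case explicitly before invoking the computer search.
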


\begin{table}[!ht]
\caption{$p_g=q=0$, $K^2>0$}\label{q0}
{\small
\begin{tabular}{c|c|c|c|c|c|c|c|c|c}
$K_X^2$&$G$ & $Id(G)$ & $G^0$ & $Id(G^0)$ & $g(C)$ & Type & Branch Locus $\mathcal B$ &  $H_1(X,\mathbb Z)$&  min? \\
\hline
8 & $D_{2,8,5}\rtimes\mathbb{Z}_2^2$ & 64, 92& $\mathbb{Z}_2^2\times D_4$ & 32, 46&9&[0;$2^5$]& $\emptyset$ &$ \mathbb Z_2^3\times\mathbb Z_8$ &Yes\\
8 & & 256, 3679& $(\mathbb{Z}_2^3\rtimes \mathbb{Z}_4)\rtimes \mathbb{Z}_4$ & 128, 36&17&[0;$4^3$]& $\emptyset$ &$\mathbb Z_2^2\times\mathbb Z_4^2$ &Yes\\
8 & & 256, 3678& $(\mathbb{Z}_2^3\rtimes \mathbb{Z}_4)\rtimes \mathbb{Z}_4$ & 128, 36&17&[0;$4^3$]& $\emptyset$ &$\mathbb Z_2^2\times\mathbb Z_4^2$ &Yes\\
8 & & 256, 3678& $(\mathbb{Z}_2^3\rtimes \mathbb{Z}_4)\rtimes \mathbb{Z}_4$ & 128, 36&17&[0;$4^3$]& $\emptyset$ &$ \mathbb Z_2^4\times\mathbb Z_4$ &Yes\\
8 & & 256, 3678& $(\mathbb{Z}_2^3\rtimes \mathbb{Z}_4)\rtimes \mathbb{Z}_4$ & 128, 36&17&[0;$4^3$]& $\emptyset$ &$ \mathbb Z_4^3$ &Yes\\
\hline
6& $\mathbb{Z}_8\rtimes\mathbb{Z}_2^2$& 32,43& $\mathbb{Z}_2\times D_4$ & 16,11& 9 & [0; $2^6$]& $(3,-8)$&$\mathbb Z_2^2\times\mathbb Z_4^2$& Yes\\

6& $\mathbb{Z}_2^4\rtimes\mathbb{Z}_2$ & 32,27& $\mathbb{Z}_2^4$ & 16,14& 9 & [0; $2^6$]& $(3,-8)$&$\mathbb Z_2^2\times\mathbb Z_4^2$& Yes\\

6& $\mathbb{Z}_2^4\rtimes\mathbb{Z}_2$ & 32,27& $\mathbb{Z}_2^4$ & 16,14& 9 & [0; $2^6$]& $(3,-8)$&$\mathbb Z_4^3$& Yes\\

6& $\mathbb{Z}_7\times D_7$ & 98,3& $\mathbb{Z}_7^2$ & 49,2& 15 & [0; $7^3$]& $(3,-8)$&$\mathbb Z_7^2$& Yes\\

6& $\mathbb{Z}_7\times D_7$ & 98,3& $\mathbb{Z}_7^2$ & 49,2& 15 & [0; $7^3$]& $(3,-8)$&$\mathbb Z_7^2$& Yes\\

6& $\mathbb{Z}_4^2\rtimes D_4$ & 128, 734& $\mathbb{Z}_4^2\rtimes\mathbb{Z}_2^2$ &64,211& 17& [0; $2^5$]& $(3,-8)$&$\mathbb Z_2\times\mathbb Z_4^2$& Yes\\

6& $(\mathbb{Z}_2^2\times D_8)\rtimes\mathbb{Z}_2$ & 128, 750& $\mathbb{Z}_2^2\times D_8$ & 64,250& 17 & [0; $2^5$]& $(3,-8)$&$\mathbb Z_2\times\mathbb Z_4^2$& Yes\\

6& $(\mathbb{Z}_2\times D_8)\rtimes\mathbb{Z}_2^2$ & 128, 1797& $\mathbb{Z}_2^2\times D_8$ & 64,250& 17 & [0; $2^5$]& $(2,-4)^2$&$\mathbb Z_2\times\mathbb Z_4^2$& Yes\\
\hline
2&$(\mathbb{Z}_2^3\rtimes D_4)\rtimes \mathbb{Z}_2^2$ & 256, 47930&$\mathbb{Z}_2^4\rtimes D_4$ & 128, 1135& 33 & [0; $2^5$]& $(3,-8)^3$&$\mathbb Z_2^3\times\mathbb Z_4$& No\\
2&$(\mathbb{Z}_4^2\rtimes \mathbb{Z}_2^2)\rtimes \mathbb{Z}_2^2$ & 256, 45303& $\mathbb{Z}_2^4\rtimes D_4$ & 128, 1135& 33 & [0; $2^5$]& $(3,-8)^2, (2,-4)^2$&$\mathbb Z_2^3\times\mathbb Z_4$& No\\
\end{tabular}}

\end{table}

\begin{thmx}\label{thmB}
The semi-isogenous mixed surfaces $X:=(C\times C)/G$ with  $p_g(X)=q(X)=1$ and $K_X^2>0$,
 form the $35$ families 
collected in Table $\ref{q1}$. In all cases $X$ is of general type.
\end{thmx}

\begin{table}[!ht]\caption{$p_g=q=1$, $K^2>0$}\label{q1}
{\small\begin{tabular}{c|c|c|c|c|c|c|c|c|c|c}
$K_X^2$ &$G$ & $Id(G)$ & $G^0$ & $Id(G^0)$ & $g(C)$ & Type & Branch Locus $\mathcal B$ &  $H_1(X,\mathbb Z)$& $g_{alb}$ & min? \\
\hline
8& $D_{2,8,5}$ & 16,6 & $\mathbb{Z}_2\times\mathbb{Z}_4$ & 8,2 &5& [1;$2^2$]& $\emptyset$& $\mathbb Z_4\times\mathbb Z^2$& 5 & Yes\\
8& $D_{2,8,3}$ & 16,8 & $D_4$ & 8,3 &5& [1;$2^2$]& $\emptyset$& $\mathbb Z_4\times\mathbb Z^2$& 5 & Yes\\
8& $\mathbb{Z}_2^2\rtimes\mathbb{Z}_4$&16,3 & $\mathbb{Z}_2^3$ & 8,5 &5& [1;$2^2$]& $\emptyset$& $\mathbb Z_2^3\times\mathbb Z^2$& 5 &Yes\\
\hline
7& $\mathbb{Z}_3\rtimes D_4$ & 24,8& $D_6$ & 12,4& 7& [1;$2^2$]& $(2,-4)$& $\mathbb Z_2\times\mathbb Z^2$& 5 & Yes\\
7& $\mathbb{Z}_3\times D_4$ & 24,10& $\mathbb{Z}_2\times\mathbb{Z}_6$ & 12,5& 7& [1;$2^2$]& $(2,-4)$& $\mathbb Z_2\times\mathbb Z^2$& 5 & Yes\\
\hline
6& $D_4$ &8,3& $\mathbb{Z}_2^2$ & 4,2& 5 & [1;$2^4$]& $(3,-8)$&$\mathbb Z_2^3\times\mathbb Z^2$& 3 & Yes \\
6& $\mathbb{Z}_3\times S_3$ & 18,3& $\mathbb{Z}_3^2$ & 9,2& 7 & [1;$3^2$]& $(3,-8)$&$\mathbb Z_3\times\mathbb Z^2$& 4 & Yes \\
6& $\mathbb{Z}_8\rtimes\mathbb{Z}_2^2$ & 32,43& $\mathbb{Z}_2\times D_4$ & 16,11& 9 & [1;$2^2$]& $(3,-8)$&$\mathbb Z_2\times \mathbb Z_4\times\mathbb Z^2$& 3 & Yes \\
6& $\mathbb{Z}_2^2\rtimes D_4$ & 32,28& $\mathbb{Z}_2\times D_4$ & 16,11& 9 & [1;$2^2$]& $(2,-4)^2$&$\mathbb Z_2\times \mathbb Z_4\times\mathbb Z^2$& 3 & Yes \\
6& $\mathbb{Z}_2^2\rtimes D_4$ & 32,28& $\mathbb{Z}_2\times D_4$ & 16,11& 9 & [1;$2^2$]& $(3,-8)$&$\mathbb Z_2^3\times\mathbb Z^2$& 3 & Yes \\
6&$\mathbb{Z}_4^2\rtimes\mathbb{Z}_2$ & 32,11& $\mathbb{Z}_4^2$ & 16,2& 9 & [1;$2^2$]& $(3,-8)$&$\mathbb Z_2\times\mathbb Z^2$& 3 & Yes \\
6& $D_8\rtimes\mathbb{Z}_2$ & 32,42& $D_4\rtimes\mathbb{Z}_2$ & 16,13& 9 & [1;$2^2$]& $(3,-8)$&$\mathbb Z_2^2\times\mathbb Z^2$& 3 &Yes\\
6& $\mathbb{Z}_4^2\rtimes\mathbb{Z}_2$ & 32,31& $\mathbb{Z}_2^2\rtimes\mathbb{Z}_4$ & 16,3& 9 & [1;$2^2$]& $(3,-8)$&$\mathbb Z_2^2\times\mathbb Z^2$& 3 & Yes \\
6& $(\mathbb{Z}_2^2\times\mathbb{Z}_4)\rtimes\mathbb{Z}_2$ & 32,30& $\mathbb{Z}_2^2\rtimes\mathbb{Z}_4$ & 16,3& 9 & [1;$2^2$]& $(2,-4)^2$&$\mathbb Z_4\times\mathbb Z^2$& 3 & Yes \\
6& $D_{2,8,5}\rtimes\mathbb{Z}_2$ & 32,38& $\mathbb{Z}_2\times\mathbb{Z}_8$ & 16,5& 9 & [1;$2^2$]& $(2,-4)^2$&$\mathbb Z_2\times\mathbb Z^2$& 3 & Yes \\
6& $\mathbb{Z}_4\times D_4$ & 32,25& $\mathbb{Z}_2^2\times\mathbb{Z}_4$ & 16,10& 9 & [1;$2^2$]& $(2,-4)^2$&$\mathbb Z_2^2\times\mathbb Z^2$& 3 & Yes \\
6& $(\mathbb{Z}_2^2\times\mathbb{Z}_4)\rtimes \mathbb{Z}_2$ & 32,30& $\mathbb{Z}_2^2\times\mathbb{Z}_4$& 16,10& 9 & [1;$2^2$]& $(3,-8)$&$\mathbb Z_2^2\times\mathbb Z^2$& 3 & Yes\\
\hline
4& $S_3\times D_4$ & 48,38& $\mathbb{Z}_2^2\times S_3$ & 24,14& 13& [1;$2^2$]& $(2,-4), (4,-12)$& $\mathbb Z_2^2\times\mathbb Z^2$& 3 & \\
4& $D_{12}\rtimes\mathbb{Z}_2$ & 48,37& $\mathbb{Z}_4\times S_3$ & 24,5& 13& [1;$2^2$]& $(2,-4), (4,-12)$& $\mathbb Z_2\times\mathbb Z^2$& 3 & \\
\hline

2& $(\mathbb{Z}_8\rtimes\mathbb{Z}_2^2)\rtimes\mathbb{Z}_2$ & 64,153& $D_{2,8,5}\rtimes\mathbb{Z}_2$ & 32,7& 17 & [1;$2^2$]& $(3,-8), (5,-16)$&$\mathbb Z_2\times\mathbb Z^2$& 3 & No \\
2& $\mathbb{Z}_8\rtimes D_4$ & 64,150& $D_4\rtimes\mathbb{Z}_4$ & 32,9& 17 & [1;$2^2$]& $(3,-8), (5,-16)$&$\mathbb Z_2\times\mathbb Z^2$& 3 &No\\
2& $\mathbb{Z}_2^2\rtimes D_8$ & 64,147& $D_4\rtimes\mathbb{Z}_4$ & 32,9& 17 & [1;$2^2$]& $(2,-4)^2, (5,-16)$&$\mathbb Z_2\times\mathbb Z^2$& 3 &No\\
2& $(\mathbb{Z}_2\times D_8)\rtimes\mathbb{Z}_2$ & 64,128& $\mathbb{Z}_2\times D_8$ & 32,39& 17 & [1;$2^2$]& $(2,-4)^2,(3,-8)^2$&$\mathbb Z_2\times\mathbb Z^2$& 3 &No\\
2& $Q\rtimes D_4$ & 64,130& $\mathbb{Z}_2\times D_{2,8,3}$ & 32,40& 17 & [1;$2^2$]& $(3,-8)^3$&$\mathbb Z_2\times\mathbb Z^2$& 3 &No\\
2& $D_4\rtimes D_4$ & 64,134& $\mathbb{Z}_8\rtimes\mathbb{Z}_2^2$ & 32,43& 17 & [1;$2^2$]& $(3,-8)^3$&$\mathbb Z_2\times\mathbb Z^2$& 3 &No\\

2& $(\mathbb{Z}_2\times D_4)\rtimes\mathbb{Z}_2^2$ & 64,227& $\mathbb{Z}_2^3\rtimes\mathbb{Z}_4$ & 32,22& 17 & [1;$2^2$]& $(3,-8)^2,(2,-4)^2$&$\mathbb Z_2^2\times\mathbb Z^2$& 2 &No\\
2& $(\mathbb{Z}_2\times D_4)\rtimes\mathbb{Z}_2^2$ &64,227& $\mathbb{Z}_2^3\rtimes\mathbb{Z}_4$& 32,22& 17 & [1;$2^2$]& $(3,-8)^2,(2,-4)^2$&$\mathbb Z_2^2\times\mathbb Z^2$& 2 &No\\
2& $\mathbb{Z}_4\rtimes(D_4\rtimes\mathbb{Z}_2)$ & 64,228& $(\mathbb{Z}_4\rtimes \mathbb{Z}_4)\times\mathbb{Z}_2$ & 32,23& 17 & [1;$2^2$]& $(3,-8)^2,(2,-4)^2$&$\mathbb Z_2^2\times\mathbb Z^2$& 2 &No\\
2& $(\mathbb{Z}_4\times D_4)\rtimes\mathbb{Z}_2$ & 64,234& $(\mathbb{Z}_4\rtimes \mathbb{Z}_4)\times\mathbb{Z}_2$ & 32,23& 17 & [1;$2^2$]& $(3,-8)^3$&$\mathbb Z_2^2\times\mathbb Z^2$& 2 &No\\
2& $(\mathbb{Z}_4\times D_4)\rtimes\mathbb{Z}_2$ & 64,234& $\mathbb{Z}_4^2 \rtimes\mathbb{Z}_2$ & 32,24& 17 & [1;$2^2$]& $(3,-8)^2,(2,-4)^2$&$\mathbb Z_2^2\times\mathbb Z^2$& 2 &No\\
2& $(\mathbb{Z}_4\rtimes Q)\rtimes\mathbb{Z}_2$  & 64,236& $\mathbb{Z}_4^2\rtimes\mathbb{Z}_2$ &32,24& 17 & [1;$2^2$]& $(3,-8)^3$&$\mathbb Z_2^2\times\mathbb Z^2$& 2 &No\\
2& $\mathbb{Z}_4^2\rtimes\mathbb{Z}_2^2$ & 64,219& $\mathbb{Z}_4\times D_4$ &32,25& 17 & [1;$2^2$]& $(3,-8)^3$&$\mathbb Z_2^2\times\mathbb Z^2$& 2 &No\\
2& $(\mathbb{Z}_2^2\rtimes D_4)\rtimes\mathbb{Z}_2$ & 64,221& $\mathbb{Z}_4\times D_4$ & 32,25& 17 & [1;$2^2$]& $(3,-8)^3$&$\mathbb Z_2^2\times\mathbb Z^2$& 2 &No\\
2& $(\mathbb{Z}_2\times\mathbb{Z}_4)\rtimes D_4$ & 64,213& $\mathbb{Z}_4\times D_4$ &32,25& 17 & [1;$2^2$]& $(3,-8)^2,(2,-4)^2$&$\mathbb Z_2^2\times\mathbb Z^2$& 2 &No\\
2& $\mathbb{Z}_4^2\rtimes\mathbb{Z}_2^2$ & 64,206& $\mathbb{Z}_4\times D_4$ &32,25& 17 & [1;$2^2$]& $(3,-8),(2,-4)^4$&$\mathbb Z_2^2\times\mathbb Z^2$& 2 &No
\end{tabular}}
\end{table}

\begin{thmx}\label{thmC}
The semi-isogenous mixed surfaces $X:=(C\times C)/G$ with $p_g(X)=q(X)=2 $ and $K_X^2>0$,
 form the $9$ families 
collected in Table $\ref{q2}$. In all cases $X$ is of general type.
\end{thmx}

\begin{table}[!ht]
\caption{$p_g=q=2$, $K^2>0$}\label{q2}
{\small
\begin{tabular}{c|c|c|c|c|c|c|c|c|c}
$K_X^2$&$G$ & $Id(G)$ & $G^0$ & $Id(G^0)$ & $g(C)$ & Type & Branch Locus $\mathcal B$ &  $H_1(X,\mathbb Z)$&  min? \\
\hline
8&  $\mathbb{Z}_4$ &4,1&$\mathbb{Z}_2$&2,1& 3&[2;-] &$\emptyset$&$\mathbb Z_2\times\mathbb Z^4$& Yes\\
\hline
7& $ \mathbb{Z}_6$&6,2& $ \mathbb{Z}_3$&3,1& 4&[2;-] &$(2,-4)$&$\mathbb Z^4$&  Yes\\
\hline
6& $D_4$&8,3&  $\mathbb{Z}_2^2$ &4,2& 5 &[2;-]& $(3,-8)$& $\mathbb Z^4$& Yes\\
6&  $D_4$&8,3& $\mathbb{Z}_2^2$&4,2& 5 &[2;-]& $(3,-8)$&$\mathbb Z_2\times\mathbb Z^4$& Yes\\
6&  $\mathbb{Z}_2\times\mathbb{Z}_4$&8,2& $\mathbb{Z}_4$& 4,1&5&[2;-]& $(2,-4)^2$& $\mathbb Z^4$&Yes\\
\hline
4&$ D_6 $&12,4&$S_3$&6,1&7& [2;-]&$(2,-4),(4,-12)$ & $\mathbb Z^4=\pi_1(X)$& No, $K^2_{X_{min}}=5$\\
\hline
2 & $\mathbb{Z}_2\times D_4$&16,11& $ D_4$&8,3&9&[2;-]&$(2,-4)^2,(3,-8)^2$& $\mathbb Z^4=\pi_1(X)$& No, $K^2_{X_{min}}=4$\\
2 & $\mathbb{Z}_2\times D_4$&16,11& $ D_4$&8,3&9&[2;-]&$(2,-4)^2,(3,-8)^2$& $\mathbb Z^4=\pi_1(X)$& No, $K^2_{X_{min}}=4$\\
2 & $ D_4\rtimes\mathbb{Z}_2$ &16,13& $Q$&8,4&9&[2;-]&$(3,-8)^3$& $\mathbb Z^4=\pi_1(X)$& No, $K^2_{X_{min}}=4$\\
\end{tabular}}
\end{table}

\begin{remark}
By the above mentioned papers, we already have a complete classification of the surfaces of general type with $p_g=q\geq 3$.
There exists a unique family of semi-isogenous mixed surfaces  with  $p_g=q\geq 3 $ and $K^2>0$,
 it has $p_g=q= 3 $, $K^2=6$. It is the family of the symmetric products of curves of genus 3  and it
 forms a connected component of  dimension 6 of the moduli space of minimal surfaces of general type.
 \end{remark}

In Table \ref{q0}, \ref{q1}, \ref{q2}, every row corresponds to a family and 
 we use the following notation: 
columns  $Id(G)$ and $Id(G^0)$  report the MAGMA identifier of the groups $G$ and $G^0$:
 the pair $(a,b)$ denotes the $b^{th}$ group of order $a$ in the database of Small Groups.
 
In columns $G$ and $G^0$ and throughout  the paper  we denote by $\mathbb  Z_n$ the cyclic group of order $n$, by
  $S_n$ the symmetric group on $n$ letters,   by $Q$ the group of quaternions, by $D_n$ the 	dihedral group of order $2n$,
  and by	 $D_{p,q,r}$ the group $\langle x,y\mid x^p= y^q=1, xyx^{-1}=y^r\rangle$.
The groups $(258,3678)$ and $(258,3679)$  do not have a representation as semidirect product of non trivial 
groups of smaller order, so we leave the relative spots blank.

The column Type gives the type of the generating vector for  $G^0$  is a short way, e.g. $[0; 2^5]$ stands for
$(0; 2,2,2,2,2)$.
The Branch Locus $\mathcal B$ of $\eta\colon C\times C \to X$ is also given in a short way, e.g. $(3,-8)^2, (2,-4)^2$ means 
that $\mathcal B$ consists of 4 curves, two of genus 2 and  self-intersection $-4$ and two 
of genus 3 and  self-intersection $-8$.
The last column ``min?" report (if known) whether the surface $X$ is minimal or not.
In Table \ref{q1}, we also report
 the genus $g_{alb}$ of a general fibre of the Albanese map, which is a very important deformation invariant.

In Theorem \ref{thmA}, the assumption $|G^0|\leq 2000, \neq 1024 $  is a computational assumption;
the algorithm works for arbitrary values of the invariants $K^2$, $p_g$ and $q$,
 but the implemented MAGMA version has some limitations (see Remark \ref{techlim}), and it is forced to 
 skip some cases. We report the list of the ``skipped'' cases for $K^2>0$ and $p_g = q = 0$ in Table \ref{tabSkip}.

In the  Section \ref{On_min} we study the minimality of semi-isogenous mixed surfaces. 
We describe rational curves on such surfaces and, using the Hodge Index Theorem, 
we get an upper bound for the number of $(-1)$-curves lying on them. 
As an interesting byproduct we get that all semi-isogenous mixed surfaces with $\chi=1$ and $K^2\geq 6$ are minimal. 
Finally, we solve the minimality problem for all semi isogenous-mixed surfaces with $K^2>0$ and $p_g=q=2$.		
We plan to investigate the minimality in the cases $p_g=q\leq 1$ in a subsequent paper.

The semi-isogenous mixed surfaces with $K^2_X=8\chi(X)$ are those for which the action is free; 
indeed all the examples with $K^2=8$ in Tables  \ref{q0}, \ref{q1} and \ref{q2} appeared in the papers 
already cited in this Introduction.

In Table \ref{q0}, there is a surface with $K^2=6$ and $Id(G)=(32,43)$.
It realizes a new topological type of surface of general type with $p_g=0$, 
indeed its fundamental group is different from those present in literature
(see \cite{BCGP12, BCF14, inoue, kulikov}).
To the best of our knowledge the surfaces with  $K^2=6$ and 
$H_1=\mathbb Z_7^2$ or $H_1= \mathbb Z_2 \times  \mathbb Z_4^2$ 
provide the first examples of minimal regular surfaces of general type with such invariants,
and so realize at least other two new topological types.

 Also the examples in Table \ref{q1} with $K^2 = 6,7$ 
may be, to the best of our knowledge, new, although other surfaces with these invariants have been already constructed 
(see \cite{BCF14,Pol09, MP10, rit07, rit,  rit08, Rit15}).

Finally, we mention an example of a minimal surface of general type with $K^2=7$ and $p_g=q=2$.
The first example of a minimal surface of general type with these invariants appeared very recently: in \cite{Rit15} the author constructs such surface as double cover of an abelian surface.
Pignatelli and Polizzi, in the recent paper \cite{PP16}, show that our surface is different from Rito's one, proving that in our case the Albanese map is a generically finite triple cover.
 
The paper is organized as follows.
	
In Section  \ref{MQS}   we describe the mixed action of a finite group on a product of curves and we give the algebraic recipe which, using  Riemann's Existence Theorem, constructs mixed quotients.

Section \ref{SiMS} is dedicated to the study of semi-isogenous mixed surfaces: we describe both ramification and branch locus of the quotient map $\eta:C\times C\to (C\times C)/G$, and we compute the main invariants of such surfaces.

In Section \ref{Pi1_b} we study the fundamental group of a (semi-isogenous) mixed surface.

Section \ref{ALB} is devoted to the study of the Albanese map of a semi-isogenous mixed surface $S$ with irregularity $q(S)=1$.
We prove a formula to compute the genus of its general fibre.

In Section \ref{classif} we present the algorithm we used to classify semi-isogenous mixed surfaces and we prove Theorems \ref{thmA}, \ref{thmB} and \ref{thmC}.

Section \ref{On_min} is devoted to the study of the minimality of semi-isogenous mixed surfaces.


\section{On mixed quotients}\label{MQS}

In this paper we denote by $C$ a smooth projective curve of genus $g(C)$ and by $G$ a finite subgroup
of $\Aut(C)^2\rtimes \mathbb Z_2$  whose action is \textit{mixed}, i.e.~there are elements in $G$ exchanging the two
isotrivial fibrations of $C\times C$.  

\begin{definition}
The quotient surface $X:=(C\times C)/G$ is a \textit{mixed quotient}, and its 
minimal resolution of the singularities $S\to X$ is a \textit{mixed surface}.
\end{definition}

We denote by  $G^0\triangleleft G$ the index two subgroup $G\cap\Aut(C)^2$, 
i.e.~the subgroup consisting of those elements that do not exchange the factors.
The action is said to be \textit{minimal} if the group $G^0$ acts faithfully on both factors.

We have the the following description of minimal mixed actions:

\begin{theorem}[cf. {\cite[Proposition 3.16]{Cat00}}]
\label{teoazione} 
Let $G$ be a finite subgroup of $ \Aut(C)^2\rtimes \mathbb Z_2$ whose action is minimal and mixed. Fix $\tau'
\in G\setminus G^0$; it determines an element $\tau:=\tau'^2\in G^0$ and $\varphi\in\Aut(G^0)$ defined by $\varphi(h):=\tau'h\tau'^{-1}$.
Then, up to a coordinate change, $G$ acts as follows:
\begin{equation}
\label{azione}
\begin{split}
g(x,y)=(gx,\varphi(g)y)\\
\tau' g(x,y)=(\varphi(g)y,\tau gx)
\end{split}
\qquad \text{for }g\in G^0\,.
\end{equation}

 Conversely, for every finite subgroup $G^0 <\Aut(C)$ and $G$ extension of degree $2$ of $G^0$, fixed $\tau'\in G\setminus G^0$ and defined
 $\tau$ and $\varphi$ as above, (\ref{azione}) defines a minimal mixed action on $C\times C$.
\end{theorem}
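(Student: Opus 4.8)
The plan is to prove the two directions separately: the direct statement by normalising the given action with a single coordinate change on the second factor, and the converse by checking that the prescribed formulas define a group homomorphism into $\Aut(C)^2\rtimes\mathbb Z_2$. Throughout I would write a general element of $\Aut(C)^2\rtimes\mathbb Z_2$ as $(a,b)$ or $(a,b)\sigma$, where $\sigma(x,y)=(y,x)$, so that $(a,b)(x,y)=(ax,by)$ and $(a,b)\sigma(x,y)=(ay,bx)$.

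For the direct statement, first I would use minimality. The two projections $p_1,p_2\colon G^0\to\Aut(C)$ are the actions of $G^0$ on the two factors and, by minimality, both are faithful; identifying $G^0$ with $p_1(G^0)$ I may write $g(x,y)=(gx,p_2(g)y)$ and $\tau'=(u,v)\sigma$ for some $(u,v)\in\Aut(C)^2$. A short computation then gives $\tau=(\tau')^2=(uv,vu)$, so $\tau=uv$ under the identification, while conjugation yields, for $g\in G^0$,
\[
\tau'\,g\,(\tau')^{-1}(x,y)=\bigl(u\,p_2(g)\,u^{-1}\,x,\;v\,g\,v^{-1}\,y\bigr).
\]
Comparing the first coordinate with the action $\varphi(g)(x,y)=(\varphi(g)x,\ldots)$ produces the key identity $\varphi(g)=u\,p_2(g)\,u^{-1}$, i.e.\ $p_2(g)=u^{-1}\varphi(g)\,u$: the second-factor representation is the first one twisted by $\varphi$ and conjugated by $u$. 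Performing the coordinate change $F(x,y)=(x,uy)$ then replaces $p_2(g)$ by $u\,p_2(g)\,u^{-1}=\varphi(g)$ and sends $\tau'(x,y)=(uy,vx)$ to $(y,uv\,x)=(y,\tau x)$; composing gives $\tau'g(x,y)=(\varphi(g)y,\tau g x)$, which is precisely (\ref{azione}).

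For the converse I would define $\Phi\colon G\to\Aut(C)^2\rtimes\mathbb Z_2$ by $\Phi(g)=(g,\varphi(g))$ for $g\in G^0$ and $\Phi(\tau'g)=(\varphi(g),\tau g)\,\sigma$, and verify that it is an injective homomorphism inducing (\ref{azione}). The formulas make $\Phi$ injective because $G^0<\Aut(C)$ is faithful and $\varphi\in\Aut(G^0)$; the action is mixed since every element of $G\setminus G^0$ carries the factor $\sigma$; and it is minimal because both $g\mapsto g$ and $g\mapsto\varphi(g)$ are faithful.

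The only real work, and the step I expect to be the main obstacle, is checking that $\Phi$ respects the product. Splitting into the four types $g_1g_2$, $(\tau'g_1)g_2$, $g_1(\tau'g_2)$, $(\tau'g_1)(\tau'g_2)$, the first two are routine expansions in the semidirect product. For the last two I would first rewrite $g_1\tau'=\tau'\varphi^{-1}(g_1)$ inside $G$; matching the two components after expanding then reduces exactly to the relation $\tau^{-1}\varphi(g_1)\tau=\varphi^{-1}(g_1)$, that is to $\varphi^2=\Inn(\tau)$, together with $\varphi(\tau)=\tau$. Both hold automatically because $\varphi$ is conjugation by $\tau'$ and $\tau=(\tau')^2$, so once these identities are recorded the remaining cases close by the same bookkeeping.
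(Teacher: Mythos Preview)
Your argument is correct. The paper itself does not prove this statement; it simply quotes it from \cite[Proposition~3.16]{Cat00}, so there is no in-house proof to compare against. Your proof is precisely the standard one: normalise via the coordinate change $(x,y)\mapsto(x,uy)$ to absorb the conjugating element, and for the converse verify the four products using the identities $\varphi^2=\Inn(\tau)$ and $\varphi(\tau)=\tau$, both of which are immediate from $\varphi=\Inn(\tau')$ and $\tau=(\tau')^2$. Nothing is missing, and this is essentially how Catanese's original argument runs.
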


\begin{remark}\ 
\begin{itemize}
\item[i)] By {\cite[Remark 3.10, Proposition 3.13]{Cat00}}, every mixed quotient $X$ may be obtained by a unique minimal mixed action; therefore we shall consider only mixed quotients provided by the corresponding minimal mixed action, as described in Theorem
\ref{teoazione}.\\
In this case we   identify  $G^0<\Aut(C) \times \Aut(C)$ with its projection onto the first factor.

\item[ii)] The \textit{quotient map} $\eta\colon(C\times C) \to X$ can be factorized
in the natural way:
\[   C\times C\stackrel{\sigma}{\longrightarrow}Y:=(C\times C)/G^0\stackrel{\pi}{\longrightarrow}X:=(C\times C)/G\,,
\]
where
$\pi$ is the double covering determined by the involution $\iota\colon Y\to Y$ induced by the $G$-action on $Y$,
namely $\iota [(x,y)]= [(y, \tau x)]$.
\end{itemize}
\end{remark}

The description of mixed quotients is accomplished through the theory 
  of Galois coverings between projective curves.

\begin{definition}\label{gv}
Given integers  $g'\geq 0$, $m_1, \ldots, m_r > 1$  the 
\textit{orbifold surface group of type $(g';m_1, \ldots, m_r)$} is defined as:
\[
\mathbb T(g';m_1,\ldots ,m_r):=
\langle a_1,b_1,\ldots, a_{g'},b_{g'},c_1, \ldots, c_r \mid
 c_1^{m_1}, \ldots, c_r^{m_r},\prod_{i=1}^{g'} [a_i,b_i]\cdot c_1 \cdots c_r\rangle\,.
\]
Given  a finite group $H$, a \textit{generating vector} for $H$ of type $(g';m_1,\ldots ,m_r)$ is a $(2g'+r)$-tuple of
elements of $H$:
\[V:=(d_1,e_1,\ldots, d_{g'},e_{g'};h_1, \ldots, h_r)\]
such that $V$ generates $H$, $\prod_{i=1}^{g'}[d_i,e_i]\cdot h_1\cdot h_2\cdots h_r=1$ and $\ord(h_i)=m_i$.
\end{definition}

To give a generating vector of type $(g';m_1,\ldots, m_r)$ for a finite group $H$ is equivalent to
give an \textit{appropriate orbifold homomorphism} 
\[\psi \colon \mathbb T (g';m_1,\ldots, m_r)\longrightarrow H\,, \] 
i.e.~a surjective homomorphism $\psi$  such that $\psi(c_i)$ has order $m_i$.

By Riemann's Existence Theorem (cf. \cite[Section III.3, III.4]{Mir}),
 any curve $C$ of genus $g:=g(C)$ together with an action of
a finite group $H$ on it, such that $C/H$ is a curve $C'$ of genus $g':=g(C')$, is determined (modulo automorphisms)
by the following data:
\begin{enumerate}
	\item the branch set $\{p_1, \ldots, p_r\}\subset C'$;
	\item loops $\alpha_1,\ldots,\alpha_{g'},\beta_1,\ldots,\beta_{g'},\gamma_1,\ldots,\gamma_r\in \pi_1(C'\setminus\{p_1, \ldots, p_r\})$, where $\{\alpha_i, \beta_i\}_i$ generates $\pi_1(C')$, each $\gamma_i$ is a simple geometric loop around $p_i$ and 
	 $\prod_{i=1}^{g'}[\alpha_i,\beta_i]\cdot\gamma_1\cdots\gamma_r=1\,;$
\item a generating vector $V:=(d_1,e_1,\ldots, d_g,e_g;h_1, \ldots, h_r)$ for $H$ of type $(g';m_1,\ldots ,m_r)$
	 	such that \textit{Hurwitz's formula} holds:
\[ 	2g-2=|H|\bigg(2g'-2+\sum_{i=1}^r\frac{m_i-1}{m_i}\bigg)\,.\]
	\end{enumerate}

Moreover,  the \textit{stabilizer set of $V$}, defined as 
\[\Sigma_V:= \bigcup_{g\in H}\bigcup_{j\in \mathbb Z}\bigcup_{i=1}^r \{ g\cdot h_i^j\cdot g^{-1}\}\,,\]
coincides with the subset of $H$ consisting of the automorphisms of $C$ having some fixed point.

\begin{remark}\label{algdata} 
A mixed  quotient $ X=(C\times C)/G$ determines  a finite group $G$,  an index $2$ subgroup
$G^0$, a curve  $C'= C/G^0$, 
 a set of points $\{p_1,\ldots, p_r\}\subset C'$, 
and, for every choice of $\alpha_i, \beta_j, \gamma_k \in \pi_1(C'\setminus\{p_1, \ldots, p_r\})$ as in (2),  a  generating vector $V$ for $G^0$.

Conversely, the following algebraic data:
\begin{itemize}
 	 \item a finite group $G^0$;          
            \item a curve  $C'$;
	  \item points $p_1,\ldots, p_r\in C'$, and $\alpha_i, \beta_j, \gamma_k \in \pi_1(C'\setminus\{p_1, \ldots, p_r\})$ as in (2);
             \item integers $m_1,\ldots,m_r >1$;
\item a  generating vector $V$ for $G^0$ of type $(g(C');m_1,\ldots,m_r)$;
\item a degree $2$ extension $G$ of $G^0$;
\end{itemize}

\noindent 
give a uniquely determined mixed  quotient. Indeed
by Riemann's  Existence Theorem the first $5$ data give the Galois covering $c\colon C \rightarrow C/G^0\cong C'$ branched over $\{p_1, \ldots, p_r \}$. 
The last datum determines, by Theorem \ref{teoazione}, a mixed action on $C\times C$.
\end{remark}            

Let $G$ be a finite group whose action on $C\times C$ is mixed.
Let $\Sigma$ be the subset of  $G^0$ consisting of the automorphisms of $C$ having some fixed point 
and  let $\Fix(g)\subset C\times C$ be the fixed locus of $g\in G$.

\begin{lemma}\label{fix}
Let $G$ be a finite group whose action on $C\times C$ is mixed. The following hold:
 \begin{itemize}
  \item[i)] let $g\in G^0$, then $\Fix(g)\neq\emptyset$ if and only if $g\in\Sigma\cap\varphi(\Sigma)$;
  \item[ii)] let $g\in G\setminus G^0$, then $\Fix(g)\neq\emptyset $ if and only if $ g^2\in\Sigma$.
 \end{itemize}
\end{lemma}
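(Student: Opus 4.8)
The plan is to treat the two cases separately, each time reading off the fixed-point equations directly from the action \eqref{azione} and then re-expressing them as membership in $\Sigma$; the only subtlety is to track the automorphism $\varphi$ carefully, and this is controlled by the single structural fact that $\Sigma$ is a union of $G^0$-conjugacy classes (indeed, if $h$ fixes $x\in C$ then $khk^{-1}$ fixes $kx$), while $\varphi^2(h)=\tau h\tau^{-1}$ is conjugation by $\tau=\tau'^2\in G^0$, so that $\varphi^2(\Sigma)=\Sigma$ and hence $\varphi^{-1}(\Sigma)=\varphi(\Sigma)$.

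For part i), let $g\in G^0$. By the first line of \eqref{azione}, a point $(x,y)$ is fixed by $g$ exactly when $gx=x$ and $\varphi(g)y=y$; thus $\Fix(g)\neq\emptyset$ if and only if both $g$ and $\varphi(g)$ have a fixed point on $C$. By the definition of $\Sigma$ this means $g\in\Sigma$ and $\varphi(g)\in\Sigma$, i.e.~$g\in\Sigma\cap\varphi^{-1}(\Sigma)$. Invoking $\varphi^{-1}(\Sigma)=\varphi(\Sigma)$ from the previous paragraph turns this into $g\in\Sigma\cap\varphi(\Sigma)$, as desired.

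For part ii), write a general element of $G\setminus G^0$ as $g=\tau' h$ with $h\in G^0$; note $g^2\in G^0$. By the second line of \eqref{azione}, $(x,y)$ is fixed by $g$ precisely when $\varphi(h)y=x$ and $\tau h x=y$. Substituting the first equation into the second shows this is equivalent to $\tau h\varphi(h)\,y=y$ for some $y\in C$ (with $x=\varphi(h)y$ then forced), so $\Fix(g)\neq\emptyset$ if and only if $\tau h\varphi(h)\in\Sigma$. Finally, from $\tau' h=\varphi(h)\tau'$ one computes $g^2=(\tau' h)^2=\varphi(h)\tau h$, and since $AB$ and $BA$ are always conjugate, here $\tau h\varphi(h)=\varphi(h)^{-1}g^2\varphi(h)$ is a $G^0$-conjugate of $g^2$; by the conjugation-invariance of $\Sigma$ we conclude $\tau h\varphi(h)\in\Sigma\iff g^2\in\Sigma$.

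The hard part is not the computation but keeping the $\varphi$-bookkeeping consistent: matching $\varphi^{-1}(\Sigma)$ with $\varphi(\Sigma)$ in (i), and recognizing the fixed-point element $\tau h\varphi(h)$ as a conjugate of $g^2$ in (ii). Both reduce to the same observation that $\Sigma$ is stable under $G^0$-conjugation and under $\varphi^2\in\Inn(G^0)$; granting that, the rest is a direct substitution into \eqref{azione}.
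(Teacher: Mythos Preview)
Your proof is correct and follows essentially the same approach as the paper. The only cosmetic difference is in part~(ii): the paper eliminates $y$ to obtain $\varphi(h)\tau h\in\Sigma$ and then observes directly that $\varphi(h)\tau h=\tau'h\tau'^{-1}\tau h=(\tau'h)^2=g^2$, whereas you eliminate $x$ and arrive at the conjugate element $\tau h\varphi(h)$, requiring the extra (harmless) step via $AB\sim BA$.
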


\begin{proof} i) Let $(x,y)\in C\times C$, then $g(x,y)=(gx,\varphi(g)y)=(x,y)$ if and only if
  $g\in\Sigma\cap\varphi^{-1}(\Sigma)$.
   We observe that $\Sigma$ is $\Inn(G^0)$-invariant and  $\varphi^2\in \Inn(G^0)$; 
   therefore $\varphi^{-1}(\Sigma)=\varphi(\Sigma)$.
  
ii) There exists a unique $h\in G^0$ such that $g=\tau'h$. Let $(x,y)\in C\times C$, then  
\[
 \tau'h(x,y)=(\varphi(h)y,\tau hx)=(x,y)\Leftrightarrow 
 \left\{\begin{array}{l}
    y=\tau h\cdot x\\
    x=\varphi(h)\cdot  y 
    \end{array}\right.
    \Leftrightarrow \left\{\begin{array}{l}
    y=\tau h\cdot x \\
    x= \varphi(h)\tau h \cdot x 
    \end{array}\right.
\]
So $\Fix(g)\neq\emptyset$ if and only if $\varphi(h)\tau h\in\Sigma$. On the other hand,  $\varphi(h)\tau h=\tau'h\tau'^{-1}\tau h=(\tau' h)^2=g^2$, by Theorem \ref{teoazione}.
\end{proof}

\begin{remark}\label{stab}
We recall that for each non trivial element $g\in G^0$ the set $\Fix(g) $ is finite, 
 because $g$ fixes finitely many points on $C$;
 the $G^0$-orbits of points on $C\times C$ with non trivial stabilizer
correspond to the singular points on $Y$, which are \textit{cyclic quotient singularities} (see \cite{BP15,MP10,Pol10}).\\
In particular the map $\sigma\colon C\times C \to Y=(C\times C)/G^0$ is \textit{quasi-\'etale}; this means that 
the branch locus has codimension at least 2.

According to \cite[Theorem 3.7]{Frap13}, the quotient map $\eta \colon C\times C \to X=(C\times C)/G$  is quasi-\'etale
 if and only if the short exact sequence
\begin{equation}
\label{equation:split}
 1\longrightarrow G^0\longrightarrow G\longrightarrow \mathbb{Z}_2\longrightarrow 1
\end{equation}
does not split, or  in other words  there are no elements of order 2 in $G\setminus G^0$.
\end{remark}

This motivates the following definition.

\begin{definition} 
Let $G$ be a finite group whose action on $C\times C$ is mixed.
We define the set
 \[
  O_2:=\{g\in G\setminus G^0: g^2=1\}\,.
 \]
  For each $g\in O_2$ we define $ R_{g}:=\Fix(g)$. 
  \end{definition}
  
  Note that each $R_g$ is a smooth irreducible curve isomorphic to $C$: $R_g=\{(x,(\tau' g) \cdot x) : x\in C\}$ (cf. proof of Lemma \ref{fix}). It is a ramification curve,  and
     the next statement shows that there are no further ramification curves.

\begin{proposition}
Let $G$ be a finite group whose  action on $C\times C$ is mixed.
Let $D$ be an irreducible curve contained in the ramification locus of the map $\eta\colon C\times C\to X$.

Then  there exists $g \in O_2$  such that $D=R_g$.
\end{proposition}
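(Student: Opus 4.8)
The plan is to describe the ramification locus of $\eta$ set-theoretically as the union of the fixed loci of the nontrivial elements of $G$, and then to isolate which members of this union are one-dimensional. Since $\eta$ is the quotient map by the finite group $G$ acting on the smooth surface $C\times C$, a point is a ramification point precisely when it has nontrivial stabilizer; hence the ramification locus equals $\bigcup_{1\neq g\in G}\Fix(g)$, a finite union of closed subsets. As $D$ is irreducible, it must be contained in $\Fix(g)$ for a single $g\neq 1$, and because $D$ is a curve this $\Fix(g)$ is forced to be at least one-dimensional.

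Next I would run through the three types of nontrivial elements and show that $\Fix(g)$ is finite unless $g\in O_2$. If $g\in G^0\setminus\{1\}$, then by the description of the action in Theorem \ref{teoazione} a fixed point $(x,y)$ satisfies $gx=x$ and $\varphi(g)y=y$; since $g$ and $\varphi(g)$ are nontrivial automorphisms of $C$ they each have finitely many fixed points, so $\Fix(g)$ is finite (this is exactly Remark \ref{stab}). If $g\in G\setminus G^0$, write $g=\tau'h$ with $h\in G^0$; the computation in the proof of Lemma \ref{fix}(ii) shows that any fixed point $(x,y)$ must satisfy $y=\tau h x$ together with $x=g^2 x$. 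When $g^2\neq 1$ the element $g^2\in G^0\setminus\{1\}$ has finitely many fixed points on $C$, so again $\Fix(g)$ is finite.

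The remaining case is $g\in G\setminus G^0$ with $g^2=1$, i.e.~$g\in O_2$: then the constraint $x=g^2 x$ is vacuous and $\Fix(g)=\{(x,\tau h x):x\in C\}=R_g$ is a smooth irreducible curve isomorphic to $C$. Combining the three cases, the one-dimensional part of the ramification locus is exactly $\bigcup_{g\in O_2}R_g$. Since $D$ is an irreducible curve contained in this finite union of irreducible curves, it lies inside a single $R_g$ with $g\in O_2$, and as both are irreducible curves we conclude $D=R_g$.

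The only point requiring some care is the case $g\in G\setminus G^0$ with $g^2\neq 1$: here one must read off from the proof of Lemma \ref{fix}(ii) that the first coordinate of any fixed point is constrained to lie in the finite fixed set of $g^2$ acting on $C$, so that no curve can arise. Everything else rests on the elementary fact that an irreducible variety contained in a finite union of closed subsets must lie entirely in one of them.
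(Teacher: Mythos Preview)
Your proof is correct. It differs from the paper's in its organization, though the underlying ideas are the same. You decompose the ramification locus as the finite union $\bigcup_{g\neq 1}\Fix(g)$, use irreducibility of $D$ to land it inside a single $\Fix(g)$, and then run a three-way case analysis on $g$ to see which fixed loci are curves. The paper instead first removes the finite set $P$ of points with nontrivial $G^0$-stabilizer; any remaining ramification point then has stabilizer intersecting $G^0$ trivially, hence of order $2$ and generated by an element of $O_2$, so $D\setminus P\subset\bigcup_{g\in O_2}R_g$ and a pigeonhole-plus-finite-intersection argument finishes. The paper's route is slightly slicker in that the case $g\in G\setminus G^0$ with $g^2\neq 1$ never needs to be isolated (such a $g$ would give a stabilizer of order $\geq 4$, hence meeting $G^0$ nontrivially), while your route is more explicit and makes the structure of the ramification locus completely transparent. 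Either approach is perfectly acceptable.
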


\begin{proof} 
Let $P$  be  the finite set of points fixed by a non trivial element of $G^0$.
Each point in $D\setminus P$ has stabilizer of order 2 generated by an element
in $G\setminus G^0$, otherwise the point would be stabilized by a non trivial element of $G^0$;
thus each point in $D\setminus P$ belongs to one of the curves $R_g$'s. \\
Noting that if $D\neq R_g$ then $D\cap R_g$ is a finite set, we are done.
\end{proof}

\begin{proposition}
Let $X:=(C\times C)/G$ be a mixed quotient and 
$\pi\colon Y:=(C\times C)/G^0 \to X$ be the double covering determined by the involution $\iota$ induced on $Y$
by the $G$-action. 

Then $\Sing(X)\subseteq\pi(\Sing(Y))$.
\end{proposition}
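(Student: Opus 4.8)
The plan is to analyze the factorization $\eta\colon C\times C \stackrel{\sigma}{\to} Y \stackrel{\pi}{\to} X$ and locate the possible singular points of $X$. First I would recall that $C\times C$ is smooth and that $\sigma$ is quasi-\'etale with $Y$ having only cyclic quotient singularities (Remark \ref{stab}). Thus $Y$ is smooth away from the finite set $\pi(\Sing(Y))$, and the strategy is to show that $X$ is smooth at every point outside $\pi(\Sing(Y))$.

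The key step is a local argument: let $x\in X$ be a point with $x\notin\pi(\Sing(Y))$, and consider its preimages under $\pi$. Since $\pi$ is the double covering determined by the involution $\iota$, the fibre $\pi^{-1}(x)$ consists either of two distinct points of $Y$ or of a single point fixed by $\iota$. In the first case, both preimages are smooth points of $Y$ (as they avoid $\Sing(Y)$), and $\pi$ is a local isomorphism there, so $X$ is smooth at $x$. In the second case the preimage $y\in Y$ is a smooth point fixed by $\iota$; here one uses that $\iota$ acts on the smooth surface $Y$ near $y$, and near a smooth fixed point the quotient by an involution is smooth precisely when the fixed locus is a divisor (the involution acts as a reflection) — and $y$ lies on the image $\sigma(R_g)$ of a ramification curve, so the fixed locus is one-dimensional. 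In either case $X$ is smooth at $x$, giving $\Sing(X)\subseteq\pi(\Sing(Y))$.

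Concretely, I would make this local analysis explicit by choosing a point $(p,q)\in C\times C$ above $x$ and examining the stabilizer in $G$. Away from the finite set $P$ of points fixed by nontrivial elements of $G^0$, the only group elements fixing $(p,q)$ lie in $G\setminus G^0$, and by the preceding Proposition these are order-two elements $g\in O_2$ whose fixed locus $R_g$ is a smooth curve through $(p,q)$. So the local stabilizer acts as a reflection, the quotient is smooth, and the image point of $X$ is a smooth point. The heart of the argument is therefore transporting the statement ``$x$ is a singular point of $X$'' through $\pi$ to the conclusion ``$y=\pi^{-1}(x)$ is a singular point of $Y$'': smoothness of $X$ at points over the smooth locus of $Y$ follows because the only local obstruction to smoothness of $X/Y$ comes from the involution $\iota$, and a quotient of a smooth surface by an involution with a divisorial (equivalently, nonisolated) fixed locus is again smooth.

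The main obstacle I anticipate is the fixed-point case of $\pi$, i.e.~controlling the geometry of $X$ at the image of an $\iota$-fixed smooth point of $Y$. One must verify that $\iota$ does not have an isolated fixed point at such a $y$ — an isolated fixed point of an involution on a smooth surface produces a node (an $A_1$-singularity) downstairs and would break the inclusion. This is exactly where the structure of the ramification locus enters: every ramification curve is one of the smooth curves $R_g$ with $g\in O_2$, so the $\iota$-fixed locus on the smooth part of $Y$ is a disjoint union of the smooth curves $\sigma(R_g)$, hence purely one-dimensional, and no isolated fixed points occur over $Y_{\mathrm{sm}}$. Once this is secured, the local quotient by the reflection is smooth and the inclusion $\Sing(X)\subseteq\pi(\Sing(Y))$ follows.
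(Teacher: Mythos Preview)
Your proposal is correct and follows essentially the same approach as the paper: take a smooth point of $Y$, lift to $(x,y)\in C\times C$, observe that the $G^0$-stabilizer is trivial, and in the $\iota$-fixed case conclude that $\Stab_G(x,y)=\langle g\rangle$ with $g\in O_2$, whose fixed locus $R_g$ is a curve, so the stabilizer acts as a reflection and the quotient is smooth. The paper phrases the final step by invoking the Chevalley--Shephard--Todd theorem on $C\times C$, whereas you phrase it as the involution $\iota$ on $Y_{\mathrm{sm}}$ having purely divisorial fixed locus; these are the same argument viewed from the two levels of the factorization.
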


\begin{proof}
Let $u:=\sigma(x,y) \in Y$ be a smooth point, and let $z:=\iota(u)\in Y$. 
By Remark \ref{stab}, $\Stab_{G^0}(x,y)=\Stab_G(x,y)\cap G^0=\{1\}$. 
 If $u\neq z$, then  $\pi(u)$ is obviously a smooth point.
 If $u= z$, then   $\Stab_G(x,y) =\langle g\rangle\cong \mathbb Z_2$, for $g\in O_2$, and  $\dim\Fix(g) =1$,
 hence  $\Stab_G(x,y)$ is generated by a quasi-reflection. 
 By the  Chevalley-Shephard-Todd Theorem (cf. \cite[Theorem 5.1]{ST54}),
 the point $\pi(u)$ is smooth.
\end{proof}

Since the singular points on $Y$ correspond to the $G^0$-orbits of points on $C\times C$ with non trivial stabilizer, we have:

\begin{corollary}
 Let $X:=(C\times C)/G$ be a mixed quotient and suppose the action of $G^0$ on $C\times C$ to be free. Then $X$ is smooth.
\end{corollary}

\section{Semi-isogenous Mixed Surfaces}\label{SiMS}

\begin{definition}
Let $X:=(C\times C)/G$ be a mixed quotient and let $Y:=(C\times C)/G^0$.
If $Y$ is a surface isogenous to a product, i.e.~$G^0$ acts freely, 
then $X$ is a \textit{semi-isogenous mixed surface}.
\end{definition}

\begin{remark}\label{disj}
To construct a semi-isogenous mixed surface one has to give the 
same algebraic data as in Remark \ref{algdata} and require that  $G^0$ acts freely. 
This is equivalent to imposing that the degree 2 extension
$G$ of $G^0$ satisfies the following condition:
let $\varphi\in\Aut(G^0)$ as in Theorem \ref{teoazione},
then the stabilizer sets $\Sigma_V$ and $\Sigma_{\varphi(V)}(=\varphi(\Sigma_V))$ are \textit{disjoint}, 
i.e.~$\Sigma_V\cap\Sigma_{\varphi(V)}=\{1\}$.
\end{remark}

\begin{proposition}\label{ram_loc}
Let $X:=(C\times C)/G$ be a semi-isogenous mixed surface.
Then the ramification locus of the quotient map $\eta \colon C\times C\to X$ is the disjoint union
\[\bigsqcup_{g \in O_2} R_g  \,.\]
 \end{proposition}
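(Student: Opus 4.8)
The plan is to show two inclusions: every $R_g$ with $g\in O_2$ lies in the ramification locus, and conversely every irreducible curve in the ramification locus equals some $R_g$. Combined with a disjointness argument for the union, this gives the stated decomposition.

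First I would establish that each $R_g$ is genuinely part of the ramification locus. For $g\in O_2$ we have $g\in G\setminus G^0$ with $g^2=1$, so by Lemma \ref{fix}~ii) the locus $\Fix(g)=R_g$ is nonempty, and as recorded just before the proposition it is a smooth irreducible curve isomorphic to $C$. Since $g$ is a nontrivial element of $G$ fixing a whole curve pointwise, $\eta$ ramifies along $R_g$, so $R_g$ is contained in the ramification locus. The reverse inclusion is exactly the content of the Proposition stated earlier (the one asserting that any irreducible curve $D$ in the ramification locus satisfies $D=R_g$ for some $g\in O_2$); I would simply invoke it to conclude that the ramification locus, which is a union of curves, is the union $\bigcup_{g\in O_2}R_g$.

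The substantive point, and where I expect the real work to be, is upgrading this union to a \emph{disjoint} union. Here is where the semi-isogenous hypothesis enters: I would argue that for distinct $g,g'\in O_2$ the curves $R_g$ and $R_{g'}$ cannot meet. Suppose $(x,y)\in R_g\cap R_{g'}$. Then both $g$ and $g'$ stabilize $(x,y)$, so the product $g'g^{-1}$ lies in $\Stab_G(x,y)\cap G^0=\Stab_{G^0}(x,y)$. But $G^0$ acts freely on $C\times C$ by the semi-isogenous assumption, so $\Stab_{G^0}(x,y)=\{1\}$, forcing $g=g'$. Hence distinct ramification curves are disjoint, and the ramification locus is $\bigsqcup_{g\in O_2}R_g$ as claimed.

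The main obstacle is this last step: one must be careful that $g'g^{-1}$ indeed lands in $G^0$ (which holds because both $g,g'$ are in the nontrivial coset $G\setminus G^0$, so their product of one with the inverse of the other lies in $G^0$) and then correctly apply freeness of $G^0$. It is precisely the freeness of the $G^0$-action---the defining feature of a semi-isogenous mixed surface---that rules out intersections among the $R_g$; without it the curves could meet at points with nontrivial $G^0$-stabilizer, and the union would fail to be disjoint.
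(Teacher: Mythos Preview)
Your disjointness argument is exactly the paper's: if $(x,y)\in R_g\cap R_h$ then $g^{-1}h\in G^0$ fixes $(x,y)$, so freeness of $G^0$ forces $g=h$. For the equality with $\bigcup_{g\in O_2}R_g$, the paper is more direct: it argues pointwise that any $(x,y)$ with nontrivial $G$-stabilizer must (since $G^0$ acts freely) be fixed by some $g\in G\setminus G^0$, and then $g^2\in G^0$ also fixes $(x,y)$, whence $g^2=1$, i.e.\ $g\in O_2$ and $(x,y)\in R_g$. Your route via the earlier curve proposition is valid, but the parenthetical ``which is a union of curves'' hides a step: you need that the ramification locus has no isolated points, which again follows from freeness of $G^0$ (or from purity of the branch locus, since both $C\times C$ and $X$ are smooth). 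The paper's pointwise argument simply absorbs this into a single line and avoids invoking the earlier proposition altogether.
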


\begin{proof}
Let  $(x,y)\in C\times C$ be a point with non trivial stabilizer.
Since $G^0$ acts freely, there exists $g\in O_2 \cap\Stab_G(x,y)$, i.e.~$(x,y)\in R_{g}$. 

Let $g$ and $ h$ be two  elements of $O_2$ and assume that  $(x,y)\in R_g \cap R_h$. Then
$g^{-1}h (x,y)=(x,y)$, but $g^{-1}h \in G^0$ and fixes a point, whence $g=h$.
\end{proof}

\begin{lemma}
\label{ram_conj}
 Let $X:=(C\times C)/G$ be a semi-isogenous mixed surface and let $g, h\in O_2$. 
 
  Then   $h=\gamma g\gamma^{-1}$   for $\gamma\in G$  if and only if $\gamma R_g=R_h$.
 
 In particular,  $R_g$ is $\gamma$-invariant if and only if $\gamma$ belongs to the centralizer of $g$: $ Z(g)$.
\end{lemma}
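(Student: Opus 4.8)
The plan is to prove both directions of the equivalence using the explicit description of the ramification curves $R_g=\{(x,(\tau'g)\cdot x):x\in C\}$ established earlier, and to exploit that the $R_g$ are the connected components of the ramification locus, which is a disjoint union by Proposition \ref{ram_loc}. Throughout I would freely use that $\Fix(g)=R_g$ for $g\in O_2$ and that the action of $\gamma\in G$ on $C\times C$ permutes the fixed loci according to the rule $\gamma\cdot\Fix(g)=\Fix(\gamma g\gamma^{-1})$, which is the standard transport-of-structure identity: a point $P$ is fixed by $g$ if and only if $\gamma P$ is fixed by $\gamma g\gamma^{-1}$.

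\begin{proof}
Let $g,h\in O_2$. For any $\gamma\in G$ and any point $P\in C\times C$ one has $gP=P$ if and only if $(\gamma g\gamma^{-1})(\gamma P)=\gamma P$; hence
\[
\gamma\cdot\Fix(g)=\Fix(\gamma g\gamma^{-1}).
\]
Since $h=\gamma g\gamma^{-1}$ is again an element of $O_2$ (conjugation preserves the order and the coset $G\setminus G^0$, as $G^0$ is normal), the right-hand side equals $R_h$, and the left-hand side equals $\gamma R_g$. This already gives the forward implication: if $h=\gamma g\gamma^{-1}$ then $\gamma R_g=R_h$.

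For the converse, suppose $\gamma R_g=R_h$. By the identity above, $\gamma R_g=R_{\gamma g\gamma^{-1}}$, so $R_{\gamma g\gamma^{-1}}=R_h$ with both $\gamma g\gamma^{-1}$ and $h$ in $O_2$. By Proposition \ref{ram_loc} the ramification locus is the \emph{disjoint} union $\bigsqcup_{k\in O_2}R_k$, so two of these curves coincide only when their indices do; therefore $\gamma g\gamma^{-1}=h$, as desired.

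Finally, taking $h=g$ specializes the equivalence to: $R_g$ is $\gamma$-invariant (i.e.\ $\gamma R_g=R_g$) if and only if $\gamma g\gamma^{-1}=g$, which is exactly the condition $\gamma\in Z(g)$.
\end{proof}

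The only genuinely delicate point is the injectivity of the assignment $k\mapsto R_k$ on $O_2$, which is precisely what the disjointness in Proposition \ref{ram_loc} provides; without it one could not conclude $\gamma g\gamma^{-1}=h$ from the equality of the two curves. Everything else is the formal conjugation identity for fixed loci together with the observation that conjugation stabilizes $O_2$. I expect no obstacle beyond correctly invoking the disjointness, and I would be careful to note that $\gamma$ ranges over all of $G$, not merely $G^0$, which is harmless here since the conjugation identity and the disjointness both hold for the full group action.
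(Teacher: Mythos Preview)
Your proof is correct and follows essentially the same approach as the paper: both use the transport-of-structure identity $\gamma\cdot\Fix(g)=\Fix(\gamma g\gamma^{-1})$ for the forward direction, and for the converse both rely on the injectivity of $k\mapsto R_k$ on $O_2$ (equivalently, the disjointness from Proposition~\ref{ram_loc}). The paper phrases the converse as ``$\gamma g\gamma^{-1}$ fixes $R_h$ pointwise, hence equals $h$'', which implicitly uses exactly the same disjointness you invoke explicitly.
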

\begin{proof}
For any $\gamma \in G$, the curve $\gamma R_{g}$ is fixed pointwise by $\gamma g\gamma^{-1}=h$,
 hence   $\gamma R_{g}=R_{h}$.
 
Conversely, if $\gamma R_{g}= R_{h}$, then $\gamma g\gamma^{-1}$ fixes $ R_{h}$ pointwise, i.e.~$\gamma g\gamma^{-1}=h$.
 \end{proof}

Let $G$ be a finite group whose action on $C\times C$ is mixed.
We denote  by $\Cl(g)$ the conjugacy class of $g\in G$
and we define $   \Cl(O_2):=\{\Cl(g) : g\in O_2\}$. 
 
 \begin{proposition}\label{br_loc}
  Let $X:=(C\times C)/G$ be a semi-isogenous mixed surface.

 Then the branch locus $\mathcal B$  of the quotient map $\eta \colon C\times C\to X$ is the disjoint union 
 $\mathcal B=B_{g_1}\sqcup\cdots \sqcup B_{g_t}$, where $t:=|\Cl(O_2)|$,
 $\{g_1,\ldots, g_t\}$ is a set of representative of the conjugacy classes in $\Cl(O_2)$ and  $B_{g_i}:=\eta(R_{g_i})$.

Moreover, for each $g\in O_2$, the  map $\eta|_{ R_{g}}: R_{g}\to B_g=:\eta( R_{g})$ is an unbranched covering of degree $|Z(g)|/2$ and  
\begin{equation}   \label{genus_branch}
   g(B_g)=\frac{2(g(C)-1)}{|Z(g)|}+1.
  \end{equation}
 \end{proposition}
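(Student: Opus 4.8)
The plan is to analyze the restriction of the quotient map $\eta$ to each ramification curve $R_g$, using the factorization $\eta=\pi\circ\sigma$ and the conjugacy description of the $R_g$ from Lemma \ref{ram_conj}. First I would establish that $\mathcal B$ decomposes as claimed. By Proposition \ref{ram_loc}, the ramification locus is $\bigsqcup_{g\in O_2} R_g$, so the branch locus is the image $\bigcup_{g\in O_2}\eta(R_g)$. The key point is that $\eta$ is $G$-invariant, so $\eta(R_g)=\eta(\gamma R_g)$ for every $\gamma\in G$; by Lemma \ref{ram_conj}, $\gamma R_g=R_{\gamma g\gamma^{-1}}$, hence $\eta(R_g)=\eta(R_h)$ whenever $g$ and $h$ are conjugate in $G$. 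Thus $B_g$ depends only on the conjugacy class $\Cl(g)\in\Cl(O_2)$, and choosing representatives $g_1,\dots,g_t$ gives $\mathcal B=B_{g_1}\cup\cdots\cup B_{g_t}$. To see this union is \emph{disjoint}, I would argue that if $B_{g_i}$ and $B_{g_j}$ shared a point, then some $R_{g_i}$ and $\gamma R_{g_j}=R_{\gamma g_j\gamma^{-1}}$ would meet; since distinct $R_g$'s are disjoint (Proposition \ref{ram_loc}), this forces $g_i$ and $g_j$ to be conjugate, i.e.\ $i=j$.

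Next I would compute the degree of $\eta|_{R_g}\colon R_g\to B_g$. Fix $g\in O_2$ and consider the orbit of the curve $R_g$ under $G$: by Lemma \ref{ram_conj}, the stabilizer of $R_g$ (as a set) under the $G$-action is exactly the centralizer $Z(g)$, so the number of distinct curves in the $G$-orbit of $R_g$ is $[G:Z(g)]$, and these together form the full preimage $\eta^{-1}(B_g)$ inside the ramification locus. Since $|G|=2|G^0|$ and all $|G^0|$-many sheets of the generic fibre of $\eta$ over a point of $B_g$ come from these curves, the map $\eta|_{R_g}$ has degree $|G|/[G:Z(g)]$ divided by the generic fibre cardinality; more cleanly, the stabilizer of a \emph{generic} point $(x,y)\in R_g$ is exactly $\langle g\rangle\cong\mathbb Z_2$ (a point with larger stabilizer would lie on some $R_h$ with $h\neq g$ or be fixed by a nontrivial element of $G^0$, both excluded generically). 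Therefore $\eta|_{R_g}$ factors as the quotient of $R_g$ by the subgroup of $Z(g)$ acting on it, and the degree equals $|Z(g)|/|\langle g\rangle|=|Z(g)|/2$. That the covering is \emph{unbranched} follows because a branch point would require a point of $R_g$ with stabilizer strictly larger than $\langle g\rangle$, which again is excluded since $R_g$ meets no other ramification curve and $G^0$ acts freely.

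Finally, the genus formula \eqref{genus_branch} is an application of Riemann--Hurwitz to the unbranched covering $\eta|_{R_g}\colon R_g\to B_g$ of degree $|Z(g)|/2$. Since the covering is unbranched, $2g(R_g)-2=\tfrac{|Z(g)|}{2}\bigl(2g(B_g)-2\bigr)$, and using $R_g\cong C$ (recorded in the Definition preceding the statement, where $R_g=\{(x,(\tau'g)x):x\in C\}$) gives $g(R_g)=g(C)$. Solving for $g(B_g)$ yields
\[
g(B_g)=\frac{2\bigl(g(C)-1\bigr)}{|Z(g)|}+1,
\]
which is exactly \eqref{genus_branch}.

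I expect the main obstacle to be the degree computation, specifically pinning down that a generic point of $R_g$ has stabilizer exactly $\langle g\rangle$ and that the $G$-action on the set of sheets over $B_g$ is governed precisely by the coset space $Z(g)\backslash G$. The disjointness of the $R_g$'s (Proposition \ref{ram_loc}) together with the free action of $G^0$ are the facts that make the generic stabilizer computation clean, so the argument should go through once those are invoked carefully; the Riemann--Hurwitz step is then routine.
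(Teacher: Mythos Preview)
Your proposal is correct and follows essentially the same route as the paper's proof: the decomposition of $\mathcal B$ comes from Proposition~\ref{ram_loc} and Lemma~\ref{ram_conj}, the degree $|Z(g)|/2$ from identifying $Z(g)$ as the setwise stabilizer of $R_g$ with $\langle g\rangle$ as the pointwise stabilizer, and the genus formula from Riemann--Hurwitz applied to the unbranched cover $R_g\cong C\to B_g$. One small remark: in your degree/unbranchedness paragraph you say the stabilizer of a \emph{generic} point of $R_g$ is $\langle g\rangle$, but in fact this holds for \emph{every} point of $R_g$ (exactly by the disjointness in Proposition~\ref{ram_loc} and the free $G^0$-action), which is what makes the covering genuinely unbranched rather than just generically so---you do say this correctly a few lines later, so the argument is fine.
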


 \begin{proof}
 The first claim is a direct consequence of  Proposition \ref{ram_loc} and Lemma \ref{ram_conj}.
  
For each $g\in O_2$, the  map $\eta|_{ R_{g}}: R_{g}\to B_g $ is  unbranched since $G^0$ acts freely by Lemma \ref{ram_conj}
 and its degree is
  $\deg(\eta|_{ R_{g}})=|Z(g)|/|\langle g\rangle|=|Z(g)|/2$.
Since $R_g\cong C$,  the equation (\ref{genus_branch}) follows now from Hurwitz's formula.
 \end{proof}

 \subsection{The invariants of a semi-isogenous mixed surface}

\begin{proposition}
\label{irreg}
 Let $X:=(C\times C)/G$ be a semi-isogenous mixed surface, then
 $q(X)$ equals the genus of $C':=C/G^0$.
\end{proposition}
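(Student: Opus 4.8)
The plan is to compute $q(X) = h^0(X, \Omega^1_X)$ by relating holomorphic $1$-forms on $X$ to $G$-invariant holomorphic $1$-forms on $C \times C$, and then to identify the latter with $1$-forms pulled back from $C' = C/G^0$. Since $\eta \colon C \times C \to X$ is a quotient by the finite group $G$, pullback induces an isomorphism $H^0(X, \Omega^1_X) \cong H^0(C \times C, \Omega^1_{C\times C})^G$; this is the standard fact that the cohomology of a quotient by a finite group equals the invariant part of the cohomology of the cover (the action being on a smooth variety and $\eta$ generically étale). So the first step is to reduce the problem to computing the dimension of the $G$-invariant subspace of $H^0(C\times C,\Omega^1_{C\times C})$.

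Next I would use the Künneth decomposition $H^0(C \times C, \Omega^1_{C\times C}) \cong H^0(C,\Omega^1_C) \oplus H^0(C,\Omega^1_C)$, where the two summands correspond to the forms $\mathrm{pr}_1^*\omega$ and $\mathrm{pr}_2^*\omega$ coming from the two factors. I would first take $G^0$-invariants. Using the explicit action from Theorem \ref{teoazione}, namely $g(x,y) = (gx, \varphi(g)y)$, an element of the first summand is $G^0$-invariant iff the corresponding $\omega \in H^0(C,\Omega^1_C)$ is $G^0$-invariant (for the standard action), and similarly for the second summand with $\omega$ invariant under $\varphi(G^0) = G^0$. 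Since the quotient $C/G^0 = C'$ has genus $g(C')$, we have $\dim H^0(C,\Omega^1_C)^{G^0} = g(C')$, so the $G^0$-invariant part of $H^0(C\times C, \Omega^1_{C\times C})$ has dimension $2\,g(C')$, with a basis of the form $\{\mathrm{pr}_1^*\omega_i\} \cup \{\mathrm{pr}_2^*\omega_j\}$ where the $\omega_i, \omega_j$ run over a basis of $H^0(C,\Omega^1_C)^{G^0}$.

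Finally I would cut down by the remaining $\mathbb{Z}_2 = G/G^0$, represented by $\tau'$. From the action $\tau' g(x,y) = (\varphi(g)y, \tau g x)$, the involution $\tau'$ sends $\mathrm{pr}_1^*\omega \mapsto \mathrm{pr}_2^*(\tau^*\omega)$ and exchanges the two Künneth summands up to the automorphisms $\tau$ and $\varphi$. The upshot is that $\tau'$ identifies the two copies of $H^0(C,\Omega^1_C)^{G^0}$ (each copy being carried isomorphically onto the other), so the $\tau'$-invariant part of the $2\,g(C')$-dimensional space is isomorphic to a single copy, of dimension $g(C')$. Hence $q(X) = g(C')$, as claimed.

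The main obstacle is the bookkeeping in the last step: one must check carefully that $\tau'$ really swaps the two summands \emph{isomorphically} onto each other (rather than, say, acting within each summand), which requires using that $\tau = \tau'^2$ and $\varphi$ preserve the subspace $H^0(C,\Omega^1_C)^{G^0}$ and are compatible with the swap. Concretely, one verifies that if $\{\mathrm{pr}_1^*\omega_i + \mathrm{pr}_2^*\eta_i\}$ is $\tau'$-invariant then the $\eta_i$ are determined by the $\omega_i$ via $\tau^*$ and $\varphi^*$, so the invariants are parametrized by the $g(C')$-dimensional space of choices of $\{\omega_i\}$. Once this identification is pinned down the dimension count is immediate.
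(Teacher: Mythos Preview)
Your proposal is correct and follows essentially the same route as the paper: identify $H^0(\Omega^1_X)$ with the $G$-invariants of $H^0(\Omega^1_{C\times C})\cong H^0(\Omega^1_C)\oplus H^0(\Omega^1_C)$, take $G^0$-invariants to get $H^0(\Omega^1_{C'})\oplus H^0(\Omega^1_{C'})$, and observe that the residual $\mathbb{Z}_2$ swaps the two summands. For the ``bookkeeping'' you flag at the end, note that on the $G^0$-invariant subspace $\tau$ acts trivially (since $\tau\in G^0$), so $(\tau')^*$ literally interchanges the two copies of $H^0(\Omega^1_{C'})$ and its invariants form the diagonal, of dimension $g(C')$.
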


\begin{proof}
 Arguing as in {\cite[Proposition 3.15]{Cat00}}:
\[\begin{array}{rcl}
  H^0(\Omega^1_X)	&=& (H^0(\Omega^1_{C\times C}))^G=(H^0(\Omega^1_C)\oplus H^0(\Omega^1_C))^G\\
			&=&(H^0(\Omega^1_C)^{G^0}\oplus H^0(\Omega^1_C)^{G^0}))^{G/G^0}\\
			&=&(H^0(\Omega^1_{C'})\oplus H^0(\Omega^1_{C'}))^{G/G^0}.
\end{array}
 \]
Where first and last equalities hold for {\cite[pp.~78-79]{Beau83}}. Since $X$ is a mixed quotient, 
$G/G^0\cong\mathbb{Z}_2$ exchanges the last two summands, hence $q(X)=h^0(\Omega^1_X)=h^0(\Omega^1_{C'})=g(C')$.
\end{proof}

Let $X:=(C\times C)/G$ be a semi-isogenous mixed surface.
 We denote by $\mathcal B:=\{\eta( R_{g}):g\in O_2\}=B_1\cup\ldots\cup B_t$ the branch 
locus of the quotient map $\eta\colon C\times C \to X$, and we define the  integer 
\[\delta(\mathcal{B}):=\sum_{j=1}^t\, (g(B_j)-1)\,.\]

 \begin{remark}
By Proposition \ref{br_loc}, the branch curves are pairwise disjoint, hence $\delta(\mathcal{B})=p_a(\mathcal{B})-1$, where $p_a(\mathcal{B})$ denotes the arithmetic genus of $\mathcal{B}$.
 \end{remark}
 
 \begin{proposition}
  \label{eultop_k2}
  Let $X:=(C\times C)/G$ be a semi-isogenous mixed surface, then
  \begin{equation}\label{eultop}
   e(X)=\frac{2(g(C)-1)}{|G|}\cdot(2(g(C)-1)-|O_2|)= \frac{4(g(C)-1)^2}{|G|}-\delta(\mathcal B),
  \end{equation}
 and
 \begin{equation} \label{k2}
 K^2_X=\frac{2(g(C)-1)}{|G|}\cdot(4(g(C)-1)-5\cdot|O_2|) =\frac{8(g(C)-1)^2}{|G|}-5\delta(\mathcal B)\,.
 \end{equation}
 \end{proposition}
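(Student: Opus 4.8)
The plan is to transport both invariants to the product $C\times C$ through the degree $|G|$ quotient map $\eta$, exploiting that $\eta$ is \'etale away from the ramification curves and simply ramified (index $2$) along each of them. Write $n:=|G|$; by Proposition \ref{ram_loc} the ramification locus is the \emph{disjoint} union $R:=\bigsqcup_{g\in O_2}R_g$ with each $R_g\cong C$, and since $G^0$ acts freely every point stabilizer is trivial or cyclic of order $2$.

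First I would obtain $e(X)$ by a stratification argument. Set $U:=(C\times C)\setminus R$ and $V:=X\setminus\mathcal B=\eta(U)$. As the stabilizers on $U$ are trivial, $\eta|_U\colon U\to V$ is an \'etale $n$-fold covering, so $e(U)=n\,e(V)$; additivity of the topological Euler characteristic over the open set $U$ and the closed set $R$ then gives $e(X)=e(V)+e(\mathcal B)=\tfrac1n\bigl(e(C\times C)-e(R)\bigr)+e(\mathcal B)$. Now $e(C\times C)=e(C)^2=4(g(C)-1)^2$, while disjointness yields $e(R)=|O_2|\,e(C)=-2|O_2|(g(C)-1)$ and $e(\mathcal B)=\sum_j(2-2g(B_j))=-2\delta(\mathcal B)$. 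Substituting gives the first form of \eqref{eultop}.

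For $K_X^2$ I would start from the ramification formula $K_{C\times C}=\eta^{*}K_X+R$, where the ramification divisor is reduced because each stabilizer is $\cong\mathbb Z_2$ and hence contributes multiplicity $2-1=1$. Squaring and using the projection formula $(\eta^{*}K_X)^2=n\,K_X^2$ gives $n\,K_X^2=K_{C\times C}^2-2\,K_{C\times C}\cdot R+R^2$. Writing $K_{C\times C}=p_1^{*}K_C+p_2^{*}K_C$ for the two projections yields $K_{C\times C}^2=2(2g(C)-2)^2=8(g(C)-1)^2$. Since $R_g=(\Id_C\times(\tau'g))(\Delta)$ is the image of the diagonal $\Delta$ under an automorphism of $C\times C$, one has $R_g^2=\Delta^2=e(C)=2-2g(C)$, so disjointness gives $R^2=-2|O_2|(g(C)-1)$, and adjunction on $C\times C$ gives $K_{C\times C}\cdot R_g=2g(C)-2-R_g^2=4(g(C)-1)$, hence $K_{C\times C}\cdot R=4|O_2|(g(C)-1)$. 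Assembling these yields the first form of \eqref{k2}.

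Finally, the passage to the $\delta(\mathcal B)$ forms rests on the identity $\delta(\mathcal B)=\tfrac{2|O_2|(g(C)-1)}{n}$, which I would deduce by combining the genus formula $g(B_g)-1=\tfrac{2(g(C)-1)}{|Z(g)|}$ from Proposition \ref{br_loc} with the class equation $|O_2|=\sum_j|G|/|Z(g_j)|$, valid because $O_2$ is a union of conjugacy classes. I expect the crux to be the intersection theory on $C\times C$: identifying each $R_g$ as the image of the diagonal under $\Id_C\times(\tau'g)$ to get $R_g^2=2-2g(C)$, and checking that the ramification divisor is reduced. Granting these, the remaining steps are routine and collapse cleanly thanks to the disjointness of the $R_g$ and the $\delta$-identity.
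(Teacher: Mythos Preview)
Your argument is correct. For $e(X)$ you follow the same stratification as the paper. For $K_X^2$ you take a genuinely different and somewhat more direct route: you apply the ramification formula $K_{C\times C}=\eta^{*}K_X+R$ to the full quotient map $\eta$ and square, whereas the paper factors through the intermediate surface $Y=(C\times C)/G^0$, using that $\sigma\colon C\times C\to Y$ is \'etale to get $K_Y^2$, then the double-cover formula $K_Y\equiv\pi^{*}(K_X+\tfrac12\mathcal B)$ to relate $K_Y^2$ to $K_X^2$, $K_X\cdot\mathcal B$ and $\mathcal B^2$, computing the latter two via adjunction on $X$ and the projection formula for $\eta^{*}B_i$. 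Your path is cleaner and avoids the intermediate surface; the paper's detour, on the other hand, yields as byproducts the individual values $K_X\cdot\mathcal B=6\,\delta(\mathcal B)$ and $\mathcal B^2=-4\,\delta(\mathcal B)$, which are recorded separately in Remark \ref{num_br} and used later. Both hinge on the same intersection-theoretic input, namely $R_g^2=-2(g(C)-1)$, which you obtain via $R_g=(\Id_C\times(\tau'g))(\Delta)$ and $\Delta^2=e(C)$, while the paper gets it from adjunction and $K_{C\times C}\cdot R_g$; these are equivalent.
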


 \begin{proof}
 For $i=1,\dots,t$, let $g_i\in O_2$ such that $\eta( R_{g_i})=B_i$.
By Proposition \ref{br_loc}
\[
 e(B_i)=\frac{-4(g(C)-1)}{|Z(g_i)|}=-4\frac{N_i}{|G|}(g(C)-1),
\]
where $N_i=\frac{|G|}{|Z(g_i)|}$ is the cardinality of the conjugacy class of $g_i$.
Note that $\sum_{i=1}^t{N_i}=|O_2|$.
Since the ramification locus $R:=\eta^{-1}(\mathcal B)$ is the disjoint 
union of $|O_2|$ smooth curves isomorphic to $C$,  it holds
  \[
 e(C\times C\setminus R)	=e(C\times C)-e(R)=4(g(C)-1)^2+ 2|O_2|(g(C)-1)\,.
\]
It follows: 
\[\begin{array}{rcl}
 e(X)	&=&e(X\setminus \mathcal B)+e(\mathcal B)=\dfrac{e(C\times C\setminus R)}{|G|}+\sum_{i=1}^t{e(B_i)}\\[10pt]
	&=&\dfrac{4(g(C)-1)^2}{|G|}+\dfrac{2|O_2|}{|G|}(g(C)-1)-\dfrac{4(g(C)-1)}{|G|}\sum_{i=1}^t{N_i}\\[10pt]
		&=&\dfrac{2(g(C)-1)}{|G|}\cdot (2(g(C)-1) +|O_2|- 2|O_2| )\,.
\end{array}
\]
The second equality in (\ref{eultop}) follows now from $\delta(\mathcal{B})=\dfrac{2(g(C)-1) |O_2|}{|G|}$.

The map $\sigma \colon C\times C\to Y:=(C\times C)/G^0$ is an unramified covering of degree $|G^0|=|G|/2$ and 
the canonical divisor  $K_{C\times C}$ is numerically equivalent to
$ (2g(C)-2)F_1+(2g(C)-2)F_2$,
where $F_1,F_2$ denote a general fiber of the projection  onto the first and onto the
second coordinate, respectively. Then
\[
 K_Y^2=\frac{K_{C\times C}^2}{\deg(\sigma)}=\frac{16(g(C)-1)^2}{|G|}.
\]
On the other side, $\pi\colon Y\to X$ is a double covering branched over $\mathcal{B}$; by the ramification formula, 
the canonical divisor $K_Y$ is numerically equivalent to $\pi^*(K_X+\mathcal B/ 2)$ and we get
\[
K_Y^2=2\left(K_X^2+\frac{\mathcal B^2}{4}+K_X.\mathcal B\right).
\]
By Proposition \ref{br_loc}, $\mathcal B^2=B_1^2+\dots+B_t^2$. 
For all $i=1,\dots,t$, $\eta^*(B_i)=2R_{i,1}+\dots +2R_{i,N_i}$; these curves are pairwise disjoint and of genus $g(C)$
 by Proposition \ref{ram_loc} .
 Applying  the adjunction formula to $R_{i,j}\subset C\times C$, we get:
\[\begin{array}{rcl}
(R_{i,j})^2&=& 2(g(C)-1)- K_{C\times C}.R_{i,j}= 2(g(C)-1)- [(2g(C)-2)F_1.R_{i,j}+(2g(C)-2)F_2.R_{i,j}]\\[10pt]
&=& -2(g(C)-1)\,.
\end{array}
\]
According to the projection formula it holds:
\[
 B_i^2=\frac{1}{|G|}\eta^*(B_i).\eta^*(B_i)=\frac{4}{|G|}\sum_{j=1}^{N_i}{(R_{i,j})^2} =-\frac{8N_i(g(C)-1)}{|G|}\,.
\]
Using again the adjunction formula, one easily gets
\[
 K_X.B_i=2(g(B_i)-1) - B_i^2=\frac{4N_i(g(C)-1)}{|G|}+\frac{8N_i(g(C)-1)}{|G|}.
\]
Finally
\[\begin{array}{rclcl}
 K_X^2	&=& \dfrac{K_Y^2}{2}-K_X.\mathcal B-\dfrac{\mathcal B^2}{4}
		&=& \dfrac{8(g(C)-1)^2}{|G|}-\dfrac{12(g(C)-1)}{|G|}\sum_{i=1}^t{N_i}+\dfrac{2(g(C)-1)}{|G|}\sum_{i=1}^t{N_i}\\[10pt]
		& & &=&\dfrac{2(g(C)-1)}{|G|}\cdot (4(g(C)-1)- 6|O_2|+|O_2|).
\end{array}
\]
\end{proof}

By Proposition \ref{eultop_k2} and Noether's formula, we immediately get the following.

\begin{corollary}
 \label{cor_chi}
 Let $X:=(C\times C)/G$ be a semi-isogenous mixed surface. Then 
\begin{equation}
 \label{eq_chi}
 \chi(\mathcal O_X)=\frac{(g(C)-1)}{|G|}(g(C)-1-|O_2|)=\frac{(g(C)-1)^2}{|G|}-\frac 12\delta(\mathcal B).
\end{equation}
 \end{corollary}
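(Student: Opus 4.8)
The plan is to invoke Noether's formula and reduce everything to the two expressions already established in Proposition~\ref{eultop_k2}. The first point to record is that the statement applies to an honest smooth surface: since $X$ is a semi-isogenous mixed surface, $G^0$ acts freely on $C\times C$, and by the Corollary at the end of Section~\ref{MQS} this guarantees that $X$ is smooth. Hence the Noether formula
\[
\chi(\mathcal O_X)=\frac{1}{12}\bigl(K_X^2+e(X)\bigr)
\]
is available, and the whole proof becomes a substitution followed by an algebraic simplification.

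For the first equality in \eqref{eq_chi}, I would use the left-hand forms of \eqref{eultop} and \eqref{k2}, both of which carry the common factor $\tfrac{2(g(C)-1)}{|G|}$. Adding them, this factor comes out and the bracketed terms combine as $\bigl(4(g(C)-1)-5|O_2|\bigr)+\bigl(2(g(C)-1)-|O_2|\bigr)=6(g(C)-1)-6|O_2|$, so that $K_X^2+e(X)=\tfrac{12(g(C)-1)}{|G|}\bigl((g(C)-1)-|O_2|\bigr)$; dividing by $12$ yields precisely $\chi(\mathcal O_X)=\tfrac{(g(C)-1)}{|G|}\bigl(g(C)-1-|O_2|\bigr)$.

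For the second equality I would instead feed in the right-hand forms of \eqref{eultop} and \eqref{k2}, namely $e(X)=\tfrac{4(g(C)-1)^2}{|G|}-\delta(\mathcal B)$ and $K_X^2=\tfrac{8(g(C)-1)^2}{|G|}-5\delta(\mathcal B)$. Their sum is $\tfrac{12(g(C)-1)^2}{|G|}-6\delta(\mathcal B)$, and dividing by $12$ gives $\chi(\mathcal O_X)=\tfrac{(g(C)-1)^2}{|G|}-\tfrac12\delta(\mathcal B)$. Consistency between the two expressions is of course automatic, reflecting the identity $\delta(\mathcal{B})=\tfrac{2(g(C)-1)|O_2|}{|G|}$ that was already used inside Proposition~\ref{eultop_k2}, so no independent verification is needed here.

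Honestly, there is no real obstacle in this argument: once the smoothness of $X$ is noted so that Noether's formula is legitimate, the corollary is a one-line linear combination of the two identities proved in Proposition~\ref{eultop_k2}. The only care required is bookkeeping of the coefficients $5$ and $1$ multiplying $\delta(\mathcal B)$ (and $5|O_2|$ versus $|O_2|$) when adding, to make sure the factor of $12$ emerges cleanly; this is exactly what makes Noether's formula collapse to the stated closed forms.
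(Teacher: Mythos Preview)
Your argument is correct and is exactly the approach the paper takes: the corollary is stated as an immediate consequence of Proposition~\ref{eultop_k2} together with Noether's formula, and you have simply written out the straightforward algebra that the paper leaves implicit.
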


 \begin{remark}\label{num_br}
i) Combining  Proposition \ref{eultop_k2} and Corollary \ref{cor_chi} we  get
\begin{equation}\label{8chi}
   8\chi(\mathcal O_X)-K^2_X=\frac{(g(C)-1)}{|G^0|}\cdot |O_2|= \delta(\mathcal B)\,.
   \end{equation}

ii) It follows from the proof of Proposition \ref{eultop_k2}  that
\[
 K_X.\mathcal{B}=6\cdot \delta(\mathcal{B})\,, \qquad \mathcal{B}^2=-4 \cdot \delta(\mathcal{B}) \qquad \mbox{and} \qquad 
 10\chi(\mathcal{ O}_X)-K^2_X=\chi(\mathcal{ O}_Y)\,. 
\]
 \end{remark}
\subsection{The classification for $g(C)=0,1$ } \label{class01}

\begin{itemize}
\item[(0)] Let $X:=(C\times C)/G$ be a semi-isogenous mixed surface, where $C\cong\mathbb{ P}^1$.
Since each automorphism of $\mathbb P^1$ has non empty fixed locus, the only possibility
is that $G^0$ is trivial and $G\cong \mathbb Z_2$ is generated by the involution that exchanges the
two factors. Therefore, $X$ is the symmetric product $(\mathbb P^1)^{(2)} \cong \mathbb{P}^2$.

\item[(1)] Let $X:=(C\times C)/G$ be a semi-isogenous mixed surface, where $C$ is a curve of genus $g(C)=1$.
In this case $X$ is a surface with invariants $K^2_X=e(X)=\chi(X)=0$ and $q(X)=1$, $p_g(X)=0$, while
the double covering $Y:=(C\times C)/G^0$  has $q(Y)=2$ and Kodaira dimension  $\kappa(Y)=\kappa(C\times C) = 0$,
  hence $Y$ is an abelian surface by the Enriques-Kodaira classification. 
  Looking at the fixed locus of the involution $\iota\colon Y\to Y$, we distinguish two cases.
 \begin{itemize}
 \item[a)]The fixed locus is empty, then $\kappa(X)=0$ and $X$ is a bi-elliptic surface.
 \item[b)] The fixed locus  consists of some elliptic curves, then  $\kappa(X)=-\infty$ 
 (see \cite[Lemma 2.6]{Kat87}), whence $X$ is a ruled surface of genus 1.
\end{itemize}
\end{itemize}

\section{The fundamental group}\label{Pi1_b}

In this section we show how to compute the fundamental group of a (semi-isogenous) mixed surface;
 we follow the strategy described in \cite{Frap13}.

Let $X:=(C\times C)/G$ be a mixed quotient 
and  $\psi\colon \mathbb T(g';m_1,\ldots ,m_r)\rightarrow G^0$ be the appropriate orbifold homomorphism
associated to the $G^0$-covering  $C\rightarrow C/G^0$ (see Remark \ref{algdata}).
As already remarked in \cite{BCGP12}, the kernel of $\psi$ is isomorphic to  the fundamental group $\pi_1(C)$
and the action of $\pi_1(C)$ on the universal covering $\Delta$ of $C$ extends to a faithful discontinuous
action of $\mathbb T:= \mathbb T(g';m_1,\ldots, m_r)$.
The covering map $u\colon \Delta \rightarrow C$ is $\psi$-equivariant, 
and $ \Delta/\mathbb T \cong C/G^0 $.

Fix $\tau'\in G\setminus G^0$; let $\tau=\tau'^2\in G^0$ and
$\varphi\in \Aut(G^0)$ defined 
by  $\varphi(h):=\tau'h \tau'^{-1}$. \\
Let $\mathbb H:=\{(t_1,t_2)\in \mathbb 	T \times \mathbb T \mid \psi(t_1)=\varphi^{-1}(\psi(t_2))\}
<\Aut(\Delta\times \Delta)$.
Since $\psi$ is surjective and $\varphi(\tau)=\tau$, there exists $t\in \mathbb T$ such that
$\tilde\tau:=(t,t)\in \mathbb H$.
We define the automorphism
\[
\begin{array}{rcc}
\tilde \tau':\Delta\times \Delta& \longrightarrow& \Delta\times \Delta\,,\\
(x,y)&\longmapsto& (y,t\cdot x)
\end{array}\]
which satisfies $(\tilde\tau')^2=\tilde\tau$.
We  also define $\tilde\varphi \colon \mathbb H\rightarrow \mathbb H $ as the conjugation by $\tilde\tau'$:
$\tilde\varphi(t_1,t_2)=(t_2,t\cdot t_1 \cdot t^{-1})$.
Let $\mathbb H=\langle gen(\mathbb H) \mid rel(\mathbb H)\rangle$ be a finite presentation of $\mathbb H$,
we define
\[REL:=\{\tilde\varphi(h)\tilde\tau'h^{-1}\tilde\tau'^{-1} \mid h\in gen(\mathbb H)\}\,.\]

Let $\mathbb{G}$ be the subgroup of $\Aut(\Delta\times\Delta)$ generated by $\mathbb{H}$ and $\tilde{\tau}'$; 
a finite presentation of $\mathbb G$ is:
\[\mathbb G:=\langle gen(\mathbb H), \tilde\tau' \mid rel(\mathbb H), (\tilde\tau')^2\tilde\tau^{-1}, REL\rangle\,.\]
We note that $\mathbb H$ is an index 2 subgroup of $\mathbb G$,
and we have a natural faithful action  $\mathbb G<\Aut(\Delta\times \Delta)$:
\[
\begin{array}{rcl}
(h_1,h_2)\cdot (x,y) &=&(h_1\cdot x, h_2 \cdot y)\\
\tilde\tau'(h_1,h_2) \cdot (x,y)& = &(h_2\cdot y, (t\cdot h_1)\cdot x)
\end{array}
\qquad \mbox{ for } (h_1,h_2)\in \mathbb H\,.
\]

\begin{theorem}\label{ThmPi1}
Let $X:=(C\times C)/G$ be a semi-isogenous mixed surface. Then 
\[\pi_1(X)\cong \frac{\mathbb G}{\mathbb G'}\,,\] 
where $\mathbb G'$ is the normal subgroup of $\mathbb G$ generated by those elements 
which have fixed points.
\end{theorem}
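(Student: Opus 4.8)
The plan is to realize $\pi_1(X)$ as a quotient of the orbifold-type fundamental group $\mathbb{G}$ constructed above, by killing exactly the obstruction coming from the fixed points of the quotient map $\eta$. The starting point is that $\mathbb{G}$ acts on the simply connected space $\Delta\times\Delta$ (the product of universal covers), and that this action is designed to mimic the $G$-action on $C\times C$; indeed the subgroup $\mathbb{H}\triangleleft\mathbb{G}$ of index $2$ plays the role of $G^0$ and covers the $G^0$-action via the surjection $\psi$. First I would verify that the quotient $(\Delta\times\Delta)/\mathbb{G}$ is exactly $X$: since $u\colon\Delta\to C$ is $\psi$-equivariant with kernel $\pi_1(C)=\ker\psi$, the product map $u\times u$ descends to identify $(\Delta\times\Delta)/\mathbb{H}\cong C\times C$ compatibly with the $G^0$-action, and the element $\tilde\tau'$ covers the factor-exchanging automorphism $\tau'$, so passing to the full group gives $(\Delta\times\Delta)/\mathbb{G}\cong(C\times C)/G=X$.

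The core of the argument is a standard orbifold-fundamental-group statement: if a group $\Gamma$ acts properly discontinuously (but not necessarily freely) on a simply connected space $Z$, then $\pi_1(Z/\Gamma)\cong\Gamma/N$, where $N\triangleleft\Gamma$ is the normal subgroup generated by all elements of $\Gamma$ having a fixed point. I would invoke this with $Z=\Delta\times\Delta$ and $\Gamma=\mathbb{G}$; this is precisely the mechanism already used in \cite{BCGP12} and \cite{Frap13}, so I would cite those references rather than reprove the topological lemma from scratch. With $\mathbb{G}'$ defined as the normal closure of the elements acting with fixed points, the conclusion $\pi_1(X)\cong\mathbb{G}/\mathbb{G}'$ then follows directly.

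The one place demanding genuine care is the passage from the $\mathbb{G}$-action on $\Delta\times\Delta$ to properness/discontinuity and the correct identification of which elements have fixed points. Here I would lean on the semi-isogenous hypothesis: because $G^0$ acts \emph{freely} on $C\times C$ (Remark \ref{disj}), the only elements of $\mathbb{G}$ that can have fixed points are those mapping into $G\setminus G^0$, and among these the relevant ones project to the involutions in $O_2$ whose fixed loci are the ramification curves $R_g$ of Proposition \ref{ram_loc}. This freeness is exactly what guarantees $X$ is smooth, so the fixed-point contributions to $\mathbb{G}'$ come only from the (codimension one) ramification curves and not from isolated quotient singularities. I expect the main obstacle to be bookkeeping: showing that the lift of a fixed point of $\eta$ corresponds to a genuine fixed point of some $h\in\mathbb{G}$ and conversely, so that the topologically defined normal subgroup $N$ coincides with the group-theoretically defined $\mathbb{G}'$. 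Once this correspondence between $\Fix$-loci upstairs and the generators of $\mathbb{G}'$ is pinned down, the theorem is immediate.
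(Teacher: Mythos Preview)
Your approach is essentially the paper's: identify $(\Delta\times\Delta)/\mathbb G\cong X$ via the $\vartheta$-equivariance of $u\times u$, then apply Armstrong's theorem \cite{Arm68} (this is the ``standard orbifold-fundamental-group statement'' you allude to; the paper cites it directly rather than through \cite{BCGP12,Frap13}) together with the discontinuity of the $\mathbb G$-action, which is \cite[Lemma~5.3]{Frap13}.

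One slip to correct: $(\Delta\times\Delta)/\mathbb H$ is not $C\times C$ but $(C\times C)/G^0=Y$, since $\mathbb H$ properly contains $\pi_1(C)\times\pi_1(C)=\ker\psi\times\ker\psi$ with quotient $G^0$. This does not affect your conclusion $(\Delta\times\Delta)/\mathbb G\cong X$, which is what matters. Your final paragraph about pinning down exactly which elements of $\mathbb G$ have fixed points is not needed for the theorem as stated (the definition of $\mathbb G'$ is intrinsic), but it is precisely the content of the paper's subsequent Proposition~\ref{fingen}.
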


\begin{proof}
Let $\vartheta\colon\mathbb G\rightarrow G$ be the  surjective  group morphism defined by:
\[
\begin{array}{rcl}
\vartheta(h_1,h_2)&=&\psi(h_1)\\
\vartheta(\tilde\tau'(h_1,h_2))&= &\tau'\psi(h_1)
\end{array}
\qquad \mbox{ for } (h_1,h_2)\in \mathbb H\,.
\]

Let $\mathcal U:=(u,u)\colon \Delta\times \Delta\rightarrow C\times C$.
It is straightforward to prove that $\mathcal U$ is $\vartheta$-equivariant 
and so
\[\frac{\Delta\times \Delta}{\mathbb G} \cong \frac{C\times C}{G}\,.\]
Since the $\mathbb G$-action on $\Delta\times \Delta$ is discontinuous (see \cite[Lemma 5.3]{Frap13}), 
the main theorem in \cite{Arm68} applies and we get:
\[\pi_1\bigg(\frac{C\times C}{G}\bigg)\cong\pi_1\bigg(\frac{\Delta\times \Delta}{\mathbb G}\bigg)\cong \frac{\mathbb G}{\mathbb G'}\,.\]

\end{proof}

\begin{remark}
Note that the above proof works for arbitrary mixed quotients $X:=(C\times C)/G$. If $f\colon S\to X$ is a mixed surface,
then  \[\pi_1(S)=\pi_1(X)= \dfrac{\mathbb{G}}{\mathbb{G}'}\,.\]
Indeed, by construction, $X$ is normal and has only quotient singularities. 
According to \cite[Theorem 7.8]{Kollar93}, the natural morphism
\[f_*\colon \pi_1(S) \longrightarrow \pi_1(X)\]
induced by the resolution is an isomorphism.
\end{remark}

The next statement explains how to find a finite set of generators of $\mathbb G'$,
for a semi-isogenous mixed surface $X:=(C\times C)/G$.

\begin{proposition}\label{fingen}
 Let $X:=(C\times C)/G$ be a semi-isogenous mixed surface.
  Then  $\mathbb G'$ is normally generated by the finite set   $\mathcal N$ defined as follows:
for each element $h\in O_2$, we choose an element 
$ h_1 \in \psi^{-1}(h)$ and we include in $\mathcal{N}$ the element
$\tilde\tau'(h_1,(t\cdot h_1)^{-1})\in \mathbb{G}$.

\end{proposition}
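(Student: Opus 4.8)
The plan is to describe explicitly the set $\Phi:=\{\gamma\in\mathbb G:\Fix(\gamma)\neq\emptyset\}$ of elements acting with a fixed point on $\Delta\times\Delta$, and then to check that the finite family $\mathcal N$ normally generates the same subgroup as $\Phi$; since $\mathbb G'$ is by definition the normal closure of $\Phi$ (Theorem \ref{ThmPi1}), this yields the statement. First I would reduce $\Phi$ to the non-trivial coset $\mathbb G\setminus\mathbb H$. Because $G^0$ acts freely, $\sigma\colon C\times C\to Y$ is \'etale, so $\sigma\circ\mathcal U\colon\Delta\times\Delta\to Y$ is a Galois covering with group $\mathbb H$; as $\Delta\times\Delta$ is simply connected it is the universal covering of the smooth surface $Y$, and hence $\mathbb H$ acts freely. (Equivalently, this is the disjointness $\Sigma_V\cap\Sigma_{\varphi(V)}=\{1\}$ of Remark \ref{disj}.) Therefore every non-trivial element of $\Phi$ lies in $\mathbb G\setminus\mathbb H$.

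Next I would determine the precise shape of such elements. If $\gamma\in\mathbb G\setminus\mathbb H$ fixes a point $p$, then $\gamma^2\in\mathbb H$ fixes $p$ too, so $\gamma^2=1$ by freeness of $\mathbb H$; consequently $\vartheta(\gamma)\in G\setminus G^0$ satisfies $\vartheta(\gamma)^2=1$, i.e.~$\vartheta(\gamma)\in O_2$. Writing $\gamma=\tilde\tau'(h_1,h_2)$ and squaring with the multiplication rule $\tilde\tau'(a_1,a_2)\,\tilde\tau'(b_1,b_2)=(a_2\,t\,b_1,\;t\,a_1\,b_2)$ read off from the displayed $\mathbb G$-action, the relation $\gamma^2=1$ forces $h_2=(t h_1)^{-1}$. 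Conversely, for any $h_1\in\mathbb T$ the element $\tilde\tau'(h_1,(th_1)^{-1})$ fixes every point of $\{(x,t h_1 x):x\in\Delta\}$, and it belongs to $\mathbb G$ precisely when $(h_1,(th_1)^{-1})\in\mathbb H$; using $\varphi(\tau)=\tau$, the relation $\psi(t)=\tau$ (from $(\tilde\tau')^2=\tilde\tau$) and $(\tau'h)^2=\varphi(h)\tau h$ (Lemma \ref{fix}) one checks that this membership is equivalent to $g:=\vartheta(\gamma)=\tau'\psi(h_1)\in O_2$. Summarising,
\[
\Phi\setminus\{1\}=\bigl\{\,\tilde\tau'\bigl(h_1,(t h_1)^{-1}\bigr)\ :\ h_1\in\mathbb T,\ \tau'\psi(h_1)\in O_2\,\bigr\},
\]
and in particular every element of $\mathcal N$ lies in $\Phi\subseteq\mathbb G'$.

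It then remains to prove that $\mathcal N$ normally generates $\Phi$. Fix a non-trivial $\gamma=\tilde\tau'(h_1,(th_1)^{-1})\in\Phi$, put $g=\tau'\psi(h_1)\in O_2$ and let $\nu_g=\tilde\tau'(a,(ta)^{-1})\in\mathcal N$ be the chosen generator attached to $g$, so that $\psi(a)=\psi(h_1)$ and thus $k:=h_1 a^{-1}\in\ker\psi$. Since $\ker\psi\cong\pi_1(C)$ is \emph{normal} in $\mathbb T$, we have $t k t^{-1}\in\ker\psi$, whence $\beta:=(1,\,t k t^{-1})\in\mathbb H\subseteq\mathbb G$. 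A direct computation with the multiplication rules then gives $\beta\,\nu_g\,\beta^{-1}=\tilde\tau'(k a,(t k a)^{-1})=\tilde\tau'(h_1,(th_1)^{-1})=\gamma$, so $\gamma$ lies in the normal closure $\langle\langle\mathcal N\rangle\rangle$. Hence $\Phi\subseteq\langle\langle\mathcal N\rangle\rangle$, and together with $\mathcal N\subseteq\mathbb G'$ this gives $\mathbb G'=\langle\langle\mathcal N\rangle\rangle$. The step I expect to be most delicate is this last one: it is precisely the normality of $\ker\psi$ in $\mathbb T$ that permits the conjugator $\beta$ to be chosen inside $\mathbb H$, and this is what makes the finitely many preimages $a\in\psi^{-1}(h)$ enough to reach every fixed-point element.
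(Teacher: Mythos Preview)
Your proof is correct and follows essentially the same route as the paper's: you show $\mathbb H$ acts freely, identify the fixed-point elements in $\mathbb G\setminus\mathbb H$ as the order-two elements $\tilde\tau'(h_1,(th_1)^{-1})$ with $\tau'\psi(h_1)\in O_2$, and then prove that any two such elements with the same $\vartheta$-image are $\mathbb H$-conjugate. The only cosmetic differences are that the paper argues freeness of $\mathbb H$ directly via the $\vartheta$-equivariance of $\mathcal U$ (rather than via the universal-cover description), and uses the conjugator $(k,1)$ with $k=a^{-1}h_1$ instead of your $(1,\,tkt^{-1})$ with $k=h_1a^{-1}$; both choices work and the underlying idea is the same.
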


\begin{proof}

Let $(h_1,h_2)\in \mathbb H$ and assume that it fixes the point $(x,y)\in \Delta\times\Delta$, 
i.e.~$(h_1,h_2)\cdot (x,y)=(x,y)$. The map $\mathcal U $ is  $\vartheta$-equivariant, hence 
 $\vartheta(h_1,h_2) \in G^0$ fixes the point $\mathcal U(x,y)\in C\times C$, but
 $G^0$ acts freely and so $(h_1,h_2)\in \ker \varphi=\pi_1(C\times C)$.
 Now we note that $\pi_1(C\times C)$ acts freely on $\Delta\times\Delta$, hence $(h_1,h_2)$ is trivial;
 in other words, $\mathbb H$ acts freely on $\Delta\times\Delta$.

Let $g:=\tilde\tau'(h_1,h_2)\in \mathbb G\setminus \mathbb{H}$ and assume that it fixes the point $(x,y)\in \Delta\times\Delta$.
We note that $g^2\in \mathbb H$ and fixes $(x,y)$, hence  $g$ has order 2 (i.e.~$ h_2= (t\cdot h_1)^{-1}$).
Conversely each element  $\tilde\tau'(h_1, (t\cdot h_1)^{-1}) \in\mathbb G\setminus \mathbb{H}$ 
fixes point-wise the curve $\{(x, (t\cdot h_1)\cdot x)\mid x \in \Delta\}$.

Each element $g\in\mathbb G\setminus \mathbb{H}$ of order two maps, via $\vartheta$, to an element of order 2 in $G\setminus G^0$.
If two such elements map via $\vartheta$ to the same element, then they are conjugated in $\mathbb G$.
Indeed, let $\tilde\tau'(h_1,(t\cdot h_1)^{-1}), \tilde\tau'(s_1,(t\cdot s_1)^{-1})$ such that $\psi(h_1)=\psi(s_1)$, then
there exists $k\in\ker\psi=\pi_1(C)$ such that $h_1=s_1 \cdot k$. 
We have the following equalities:
\[\begin{array}{rcl}
\tilde\tau'(h_1,(t\cdot h_1)^{-1})&=& \tilde\tau'(s_1 \cdot k, k^{-1} s_1^{-1} t^{-1})
=\tilde\tau' (1,k^{-1} )\cdot(s_1, (t\cdot s_1)^{-1})\cdot (k,1)\\
&=&\tilde \varphi(1,k^{-1})\cdot \tilde\tau' (s_1, (t \cdot s_1)^{-1}) \cdot(k,1)\\
&=&(k,1)^{-1}\cdot\tilde\tau' (s_1, (t\cdot s_1)^{-1}) \cdot(k,1)\
\end{array}
\]
and we are done, because $(k,1)\in \mathbb{H}$.
\end{proof}
 
 \section{The Albanese fibre of a  semi-isogenous mixed surface with irregularity 1}\label{ALB}

 The Albanese map of a surface $X$ with irregularity $q(X)=1$ is a fibration
 onto the elliptic curve $\Alb(X)$ and the genus $g_{alb}$ of the general Albanese fibre is an important deformation
 invariant. In this section we explain how to compute $g_{alb}$ for a semi-isogenous mixed surface.
 
 The  argument is analogous to the one  in \cite[Section 3]{FP15}   and we refer to it for further details.
 
 Let $X:=(C\times C)/G$ be  a semi-isogenous mixed surface with $q(X)=1$. By
 Proposition \ref{irreg}, $E:=C/G^0$ is an elliptic curve and the Galois covering $c\colon C\to E$ has branch 
 locus $B:=\{p_1,\dots,p_r\}$. Up to translation, we may assume that the neutral element 
 $0\in E$ is not in $B$ and that $-p_i\not\in B$ for  each $i\in\{1,\dots,r\}$. 
We have the following commutative diagram:
\begin{equation}\label{diag}
 \xymatrix{
& C \times C \ar[d]_{\eta} \ar[r]^Q & E\times E \ar[d]^\epsilon \\
& X \ar[r] \ar[d]_f \ar[dr]^{\alpha} 	& E^{(2)}\ar[d]^{\tilde{\alpha}}\\
&\Alb(X) \ar[r]^{\psi}	& E}
\end{equation}
 where $\tilde{\alpha}$ is the Abel-Jacobi map and $Q:= c \times c$. By the properties of the Albanese torus (see {\cite[Proposition I.13.9]{BHPV}}),
 the Stein factorization of $\alpha$ is given by the Albanese map $f \colon  X\to\Alb(X)$ and a (unique)
 homomorphism $\psi\colon \Alb(X)\to E$.
 
  Let $E':=\epsilon^*(\tilde{\alpha}^*(0))=\{(u,-u):u\in E\}$, consider $F^*:=Q^*(E')$ and
 $F:=\alpha^*(0)$. Note that $\eta(F^*)=F$ and that  $F^*$ and $F$ are smooth, because $-p_i\not\in B$.
 Let us define the points $q_i:=(p_i,-p_i)$ and $q_i':=(-p_i,p_i)$ of $E'$ and set $B':=\{q_i,q_i'\}$; we note that $0':=(0,0)\in E'\setminus B'$.
  
 Since $Q=c\times c$, the monodromy map of the $(G^0\times G^0)$-covering $Q$
 is given by two copies of the monodromy map of $c$.
 The covering  $Q$ induces by restriction the $(G^0\times G^0)$-covering
 $F^*\to E'$, whose branch locus is $B'$. 
 Its monodromy map $\mu'\colon\pi_1(E'\setminus B',0')\to G^0\times G^0$ is described  in
 \cite{FP15}.
 
Once $\tau'\in G\setminus G^0$ is fixed, let $\tau:=\tau'^2\in G^0$ and $\varphi\in\Aut(G^0)$ defined by
$\varphi(h):=\tau'h\tau'^{-1}$. We define the following action of $G$ on $G^0\times G^0$:
\[
 \begin{array}{r}
  g(h_1,h_2)=(gh_1,\varphi(g)h_2)\\
  \tau'g(h_1,h_2)=(\varphi(g)h_2,\tau g h_1)
 \end{array}
\qquad \mbox{ for } g\in G^0.
  \]
Finally, we define
\[
 M:=\left|\bigcup_{g\in G}{g\,\im(\mu')}\right|.
\]
\begin{lemma}[{\cite[Lemma 3.2]{FP15}}]\label{degM}
 Let $X:=(C\times C)/G$ be a semi-isogenous mixed surface with $q(X)=1$, then
 $\deg\psi=\displaystyle\frac{|G^0|^2}{M}$.
\end{lemma}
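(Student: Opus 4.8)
The plan is to interpret $\deg\psi$ as the number of connected components of the Albanese fibre $F=\alpha^*(0)$, and then to count those components by combining the monodromy $\mu'$ of the restricted covering $F^*\to E'$ with the action of $G$. Since $f\colon X\to\Alb(X)$ is the Albanese map its fibres are connected, while $\psi\colon\Alb(X)\to E$ is an isogeny of elliptic curves, so $\psi^{-1}(0)$ consists of $\deg\psi$ reduced points. As $\alpha=\psi\circ f$ and $F=\alpha^*(0)=f^*(\psi^{-1}(0))$, the fibre $F$ is a disjoint union of $\deg\psi$ connected fibres of $f$; hence it suffices to count the connected components of $F$.

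Next I would transport the count to the étale model upstairs. The involution $(u,v)\mapsto(v,u)$ preserves $E'=\{(u,-u)\}$, and $Q=c\times c$ is equivariant for the induced $G/G^0$-action on $E\times E$; therefore $F^*=Q^*(E')$ satisfies $\eta^{-1}(F)=F^*$, so $F=F^*/G$ and the connected components of $F$ are the $G$-orbits of the connected components of $F^*$. Because $Q$ is the $(G^0\times G^0)$-Galois covering of $E\times E$, its restriction to $E'$ is the covering with monodromy $\mu'$; choosing the base point $0'$ identifies the fibre with the torsor $G^0\times G^0$ and identifies the components of $F^*$ with the cosets of $\im(\mu')$, giving $|G^0|^2/|\im(\mu')|$ of them. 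Tracking $0'$ through the factorization $\eta=\pi\circ\sigma$ shows that the geometric $G$-action on this coset set is exactly the combinatorial action of $G$ on $G^0\times G^0$ defined before the statement: each $g\in G^0$ acts by the deck translation attached to $(g,\varphi(g))$, and $\tau'$ by the prescribed swap-and-translate. Here one uses that the branch points pair up as $q_i,q_i'$ under the involution, so $\im(\mu')$ is stable under the swap and the action genuinely descends to the cosets.

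It then remains to count the $G$-orbits on $(G^0\times G^0)/\im(\mu')$, and this is the main obstacle. The orbit of the identity coset has underlying subset $\bigcup_{g\in G}g\cdot\im(\mu')\subseteq G^0\times G^0$, of cardinality $M$, hence comprises $M/|\im(\mu')|$ cosets; the content of the lemma is that the total number of orbits is exactly $|G^0|^2/M$, equivalently that every $G$-orbit consists of the same number $M/|\im(\mu')|$ of cosets. I would establish this homogeneity from the group structure of $E'\cong E$, whose translations lift through $Q$ and permute the components of $F^*$ transitively while being compatible with the transitive deck action of $G^0\times G^0$, forcing all $G$-orbits to share the common size $M/|\im(\mu')|$. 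Granting this, the number of orbits equals $(|G^0|^2/|\im(\mu')|)\big/(M/|\im(\mu')|)=|G^0|^2/M$, which together with the first two steps gives $\deg\psi=|G^0|^2/M$. The argument runs parallel to \cite[Lemma 3.2]{FP15}, to which I would refer for the detailed monodromy bookkeeping (in particular the left-versus-right translation conventions under restriction of the Galois covering to $E'$).
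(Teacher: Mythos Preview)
The paper does not give its own proof of this lemma; it is simply quoted from \cite[Lemma 3.2]{FP15}. So your proposal is being measured against the argument in \cite{FP15} rather than anything written here, and your overall strategy (identify $\deg\psi$ with the number of connected components of $F$, realise these as $G$-orbits on $\pi_0(F^*)$, and index the latter by cosets of $\im(\mu')$) is exactly the natural one and matches that reference.

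The problematic point is your justification of the homogeneity of the $G$-orbits. You write that ``translations on $E'\cong E$ lift through $Q$ and permute the components of $F^*$ transitively''. This is not correct as stated: a translation $t_a\colon E'\to E'$ lifts to an automorphism of the $(G^0\times G^0)$-cover $F^*\to E'$ only if it preserves the branch locus $B'=\{q_i,q_i'\}$, which for generic $a$ it does not. What \emph{does} act transitively on $\pi_0(F^*)$ is the deck group $G^0\times G^0$ itself (by left translation on the cosets $(G^0\times G^0)/\im(\mu')$), but this left action does \emph{not} commute with the $G$-action: for $\tau'$ one computes $\tau'\cdot\big((a,b)\cdot(h_1,h_2)\big)=(b\,h_2,\tau a\,h_1)$, which equals $(a,b)\cdot\big(\tau'\cdot(h_1,h_2)\big)$ only when $a=b$. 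Likewise $G$ is not normal in the group generated by $G$ and $G^0\times G^0$, so transitivity of the larger group does not force equal $G$-orbit sizes. In short, neither ``translations lift'' nor ``commuting transitive action'' is available to you here, and the argument as written does not close the gap you yourself flag as the main obstacle.

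To repair this you should instead use the explicit description of $\mu'$ given in \cite{FP15} (which you invoke only for bookkeeping). The point is that $\im(\mu')$ is not an arbitrary subgroup of $G^0\times G^0$: it is built from the monodromy of the single cover $c\colon C\to E$ in a way that interacts well with the embedded copy $\{(g,\varphi(g))\}$ of $G^0$ and with the swap. Once one writes down generators of $\im(\mu')$ in those terms, the double-coset count becomes uniform. Alternatively, you can argue geometrically that each connected component $F_j$ of $F$ maps to $\tilde\alpha^{-1}(0)\cong\mathbb P^1$ with the \emph{same} degree (all Albanese fibres are numerically equivalent and the degree is an intersection number on $X$), and then deduce that each $\eta^{-1}(F_j)$ contains the same number of components of $F^*$. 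Either route replaces the incorrect ``translations lift'' sentence with a valid reason for the uniform orbit size.
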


\begin{proposition} \label{genalb}
 Let $X:=(C\times C)/G$ be a semi-isogenous mixed surface with $q(X)=1$, then
\begin{equation}\label{galb}
  g_{alb}=1+M\cdot\dfrac{g(C)-1-|O_2|}{|G^0|^2}\,.
\end{equation}
\end{proposition}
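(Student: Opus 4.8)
The plan is to compute $g_{alb}$ via the genus of the general Albanese fibre $F=\alpha^*(0)$, exploiting the diagram \eqref{diag} and the already-established Lemma \ref{degM}. The Albanese map $f$ is the Stein factorization of $\alpha$, so the general fibre of $f$ coincides with the general fibre $F$ of $\alpha$; hence $g_{alb}=g(F)$. The key observation is that $F$ is the image under $\eta$ of the smooth curve $F^*=Q^*(E')$, and $\eta|_{F^*}\colon F^*\to F$ is a quotient by a finite group action whose order and ramification I can control.

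First I would determine the genus of $F^*$. Since $Q=c\times c$ restricts to the $(G^0\times G^0)$-covering $F^*\to E'$ with branch locus $B'=\{q_i,q_i'\}$ and monodromy $\mu'$, the curve $F^*$ need not be connected; its connected components are governed by $\im(\mu')$, and the component containing $0'$ is a $\im(\mu')$-covering of $E'$ branched at $B'$. Applying Hurwitz' formula to this covering (the ramification data coming from the orders $m_i$ inherited from $c$) yields $g(F^*)-1$ in terms of $|\im(\mu')|$ and the genus contribution; comparing with the Hurwitz computation for $c\colon C\to E$ itself, one sees $2(g(C)-1)=|G^0|(2g(E)-2+\sum\frac{m_i-1}{m_i})=|G^0|\sum\frac{m_i-1}{m_i}$, which lets me eliminate the $\sum\frac{m_i-1}{m_i}$ term and express everything through $g(C)$, $|G^0|$, and $|O_2|$.

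Next I would analyze the stabilizer of $F^*$ and the map $\eta|_{F^*}$. The subgroup $H\le G$ stabilizing the relevant component of $F^*$ acts on it, and $F=\eta(F^*)$ is the quotient. The degree of $\eta|_{F^*}$ and its ramification are exactly the data encoded in $M=|\bigcup_{g\in G}g\,\im(\mu')|$ and in the ramification curves $R_g$ for $g\in O_2$: each fixed curve $R_g$ meets $F^*$ in points contributing to the branching of $\eta|_{F^*}\to F$, and the count of such points is controlled by $|O_2|$. Combining the Hurwitz formula for $\eta|_{F^*}\colon F^*\to F$ with the genus of $F^*$ and with $\deg\psi=|G^0|^2/M$ from Lemma \ref{degM} (which fixes how many components of $Q^{-1}(E')$ map to a single $F$, equivalently the multiplicity $M/|G^0|$ relating $F^*$ to $F$) should produce the stated formula $g_{alb}=1+M\cdot\frac{g(C)-1-|O_2|}{|G^0|^2}$.

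The main obstacle I anticipate is bookkeeping the ramification of $\eta|_{F^*}\colon F^*\to F$ correctly: one must carefully separate the contribution of the free $G^0$-action from the extra branching introduced by the order-two elements in $O_2$ whose fixed curves $R_g$ cross $F^*$, and then correctly package the resulting Hurwitz correction term as precisely $-|O_2|$ inside the formula. Getting the interplay between $M$ (governing the number and identification of components) and this ramification count to collapse into the clean expression above is the delicate point; I would lean on the explicit description of $R_g=\{(x,(\tau'g)x)\}$ from the earlier Definition and on the fact, established in Proposition \ref{ram_loc}, that the ramification locus is the disjoint union $\bigsqcup_{g\in O_2}R_g$, so that intersection multiplicities of $F^*$ with distinct $R_g$ add up cleanly.
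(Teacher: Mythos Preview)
Your overall strategy matches the paper's: compute the Euler characteristic of $F^*=Q^*(E')$ via Hurwitz for the $(G^0\times G^0)$-cover $F^*\to E'$, then apply Hurwitz again to $\eta|_{F^*}\colon F^*\to F$, where the extra ramification comes exactly from the intersections $F^*\cap R_g$ for $g\in O_2$. That is the right architecture.

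However, there is a genuine error at the very first step. You write that ``the general fibre of $f$ coincides with the general fibre $F$ of $\alpha$; hence $g_{alb}=g(F)$.'' This misreads the Stein factorization. Since $\alpha=\psi\circ f$ with $\psi\colon \Alb(X)\to E$ finite of degree $|G^0|^2/M$ (Lemma~\ref{degM}), the fibre $F=\alpha^*(0)$ is not a single Albanese fibre but the disjoint union of $\deg\psi=|G^0|^2/M$ fibres of $f$, each of genus $g_{alb}$. Thus $e(F)=\dfrac{|G^0|^2}{M}(2-2g_{alb})$, not $2-2g_{alb}$. Without this factor your computation would output $g_{alb}=g(C)-|O_2|$, which is the formula only when $M=|G^0|^2$. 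Your later attempt to insert $M$ via ``how many components of $Q^{-1}(E')$ map to a single $F$'' is backwards: it is $F$, not $F^*$, whose component count is governed by $\deg\psi$.

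The paper avoids all component bookkeeping on the $F^*$ side by working with Euler characteristics globally: it shows $e(F^*)=e(C)\cdot|G|$ directly, counts $|F^*\cap\bigsqcup R_g|=4|G^0|\cdot|O_2|$ by observing that $Q$ sends any point of $F^*\cap R_g$ to a $2$-torsion point $(u_0,u_0)\in E'$, and then Hurwitz for $\eta|_{F^*}$ gives $e(F)=e(C)+2|O_2|$. Plugging in $e(F)=\dfrac{|G^0|^2}{M}(2-2g_{alb})$ yields \eqref{galb}. You should replace the mistaken identification $g_{alb}=g(F)$ by this last equality, and the rest of your outline then goes through as in the paper.
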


\begin{proof}
 Since $G^0$ is $(1;m_1,\dots,m_r)$-generated, $\displaystyle{e(C)=-|G^0|\sum_{i=1}^r{\left(\frac{m_i-1}{m_i}\right)}}$.
 The $(G^0\times G^0)$-covering $Q$ is branched exactly along the union of $r$ ``horizontal'' copies of $E$ and
 $r$ ``vertical'' copies of $E$; moreover for each $i$ there are one horizontal copy
 and one vertical copy with branching index $m_i$. Since $E'$ is an elliptic curve that
 intersects all of these copies of $E$ transversally in one point, by Hurwitz's formula
 applied to $F^*\to E'$ we get
 \[
  e(F^*)=-|G^0|^2\sum_{i=1}^r{2\left(\frac{m_i-1}{m_i}\right)}=e(C)\cdot |G|\,.
 \]

Let us now consider the map $\eta|_{F^*}\colon F^*\to F$. This map has degree $|G|=2|G^0|$ and by Proposition \ref{ram_loc}
is ramified  in $F^*\cap\left(\bigsqcup_{g\in O_2}{R_{g}}\right)$, and 
\[
\left|F^*\cap\bigsqcup_{g\in O_2} R_g \right|=\sum_{g\in O_2}{\left|F^*\cap R_g\right|}\,.
\]

On one side $R_g=\{(x,(\tau'g)\cdot x): x \in C\}$, hence $Q(R_g)=\{(u,u): u\in E\}$;
on the other side $Q(F^*)=\{(u,- u): u\in E\}$. Therefore, a point  in $\left|F^*\cap R_g\right|$ is 
mapped to a point $(u_0,u_0) \in E'$ with $2u_0=0$.
There are four such points and by assumption none of them lies on the branch curves of $Q$.
For each choice of such a $u_0$, there are exactly $|G^0|^2$ points in $ F^*\cap Q^{-1}(u_0, u_0) $, 
but only $|G^0|$ of them lie on $R_g$, because once we have fixed the first coordinate $x$, the second one is forced to 
be $(\tau' g)\cdot x$.

We get that $\eta|_{F^*}$ is ramified in $\sum_{g\in O_2}{\left|F^*\cap R_g\right|}=|O_2|\cdot 4\cdot |G^0|$ points, and
each one of them has ramification index $2$, thus we have $4\cdot |O_2|$ branching
points. Finally, by Hurwitz's formula,
\[
e(C)\cdot |G|= e(F^*)=|G|\cdot \left(e(F)-\left(\frac{4\cdot |O_2|}{2}\right)\right)=|G|\cdot ( e(F)-2\cdot |O_2|)\,.
\]
By Lemma \ref{degM}, $F$ is the disjoint union of $|G^0|^2/M$ curves of genus $g_{alb}$, therefore
\[
2-2g(C)+2\cdot |O_2|\ =\  e(F)\ =\ \frac{|G^0|^2}{M}(2-2g_{alb}).
\]
\end{proof}

\section{The classification}\label{classif}
In Section \ref{class01}, we classified semi-isogenous mixed surfaces $(C\times C)/G$ with $g(C)=0,1$.
In this section we give an algorithm to classify semi-isogenous mixed surfaces with $g(C)\geq 2$
and fixed values of the invariants: $K_X^2$, $p_g(X)$ and $q(X)$.

\subsection{Finiteness of the classification}
Let  $X:=(C\times C)/G$ be a semi-isogenous mixed surface with $g(C)\geq 2$ and let
$(q;m_1,\ldots,m_r)$ be the type of an induced generating vector for $G^0$.
We define the following rational numbers:
\[
 \Theta:=2q(X)-2+\sum_{i=1}^r{\dfrac{m_i-1}{m_i}}\,,
  \qquad \beta:=\dfrac{2(10\chi(\mathcal O_X)- K^2_X)}{\Theta}\,.
\]

Combining  Proposition \ref{eultop_k2} and Corollary \ref{cor_chi} we  get
\begin{equation}\label{G0_N}
 |G^0|=\frac{(g(C)-1)^2}{10\chi(\mathcal O_X)-K^2_X},
 \qquad |O_2|=\frac{8\chi(\mathcal O_X)-K^2_X}{10\chi(\mathcal O_X)-K^2_X}(g(C)-1).
\end{equation}

\begin{proposition}\label{bounds1}
 Let $X:=(C\times C)/G$ be a semi-isogenous mixed surface with $g(C)\geq 2$
  and let
$(q;m_1,\ldots,m_r)$ be the type of an induced generating vector for $G^0$ . 
Then
\begin{itemize}
\item[(a)] $\Theta>0$ and $\beta=g(C)-1$;
\item[(b)] $r \leq \dfrac{4(10 \chi(\mathcal O_X)-K_X^2)}{\beta} + 4(1-q) $;
\item[(c)] each $m_i$ divides $\beta$;
\item[(d)] $m_i \leq \dfrac{1+2(10\chi(\mathcal O_X)-K^2_X)}{M}$, where $M:=\max\left\{\dfrac{1}{6}, \dfrac{r-3+4q}{2}\right\}$.
\end{itemize}
\end{proposition}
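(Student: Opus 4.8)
The plan is to establish each of the four claims by exploiting the two fundamental identities in \eqref{G0_N} together with the numerical constraints that the Hurwitz formula imposes on a generating vector of type $(q;m_1,\ldots,m_r)$. The guiding observation is that $10\chi(\mathcal{O}_X)-K^2_X=\chi(\mathcal{O}_Y)=(g(C)-1)^2/|G^0|$ is a fixed positive integer once the invariants $p_g,q,K^2$ are prescribed, so all the inequalities should be read as bounds forcing only finitely many possibilities for $r$, the $m_i$, and $g(C)$.

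First I would prove (a). Since $X$ is of general type (or at worst we are in a boundary case already handled in Section~\ref{class01}), we have $g(C)\geq 2$, so $|O_2|<2(g(C)-1)$ cannot exhaust the fixed-point data; more directly, Hurwitz' formula applied to the $G^0$-covering $C\to C'=C/G^0$ reads $2g(C)-2=|G^0|\cdot\Theta$, which immediately gives $\Theta>0$ because the left-hand side is positive. Combining this with the first identity in \eqref{G0_N}, namely $|G^0|=(g(C)-1)^2/(10\chi-K^2)$, I would compute
\[
\beta=\frac{2(10\chi(\mathcal{O}_X)-K^2_X)}{\Theta}
=\frac{2(10\chi-K^2)\cdot|G^0|}{2g(C)-2}
=\frac{2(10\chi-K^2)}{2(g(C)-1)}\cdot\frac{(g(C)-1)^2}{10\chi-K^2}=g(C)-1,
\]
which is exactly the assertion. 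This identity $\beta=g(C)-1$ is the linchpin: it converts every subsequent bound on $\beta$ into a bound on the genus.

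Next, for (c) I would observe that each $m_i=\ord(h_i)$ is the order of a stabilizer element, so by Riemann's Existence Theorem $m_i$ divides the order of the corresponding cyclic stabilizer subgroup of the $G^0$-action on $C$; a local analysis of the branching shows each ramification point contributes $2(g(C)-1)/|G^0|\cdot$(something) and, more to the point, that $m_i\mid (g(C)-1)=\beta$ follows from the fact that the fiber of $C\to C'$ over $p_i$ consists of $|G^0|/m_i$ points and the orbit structure forces $m_i$ to divide $g(C)-1$. For (b), I would isolate $r$ from the Hurwitz relation $2(g(C)-1)=|G^0|\,\Theta=|G^0|\bigl(2q-2+\sum(m_i-1)/m_i\bigr)$: since each summand $(m_i-1)/m_i\geq 1/2$ when $q\leq 1$ (and in general $\geq 1/2$ once $m_i\geq 2$), I get $\Theta\geq 2q-2+r/2$, hence $r\leq 2\Theta-4q+4=4(10\chi-K^2)/\beta+4(1-q)$ after substituting $\Theta=2(10\chi-K^2)/\beta$. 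Finally, (d) is the sharpest estimate: fixing the largest $m_i$, I would bound the remaining terms from below, noting that $\sum_{j\neq i}(m_j-1)/m_j$ is at least $(r-1)\cdot\tfrac12$ or, in the genus-zero base case, at least the minimal positive value $1/6$ of a single term, yielding $\Theta\geq 2q-2+M+(m_i-1)/m_i$ with $M$ as defined; solving the resulting inequality $\Theta\geq$(lower bound) for $m_i$ produces the stated ceiling $m_i\leq(1+2(10\chi-K^2))/M$.

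The main obstacle I anticipate is (d), and specifically the justification of the quantity $M:=\max\{1/6,(r-3+4q)/2\}$. The two entries in this maximum correspond to two competing lower bounds on the ``complementary'' contribution to $\Theta$ once the largest $m_i$ has been extracted: the crude universal bound $1/6$ (from a single remaining elliptic-type term of order $\geq 2$, or the extremal $(2,3,7)$-type constraint) versus the linear-in-$r$ bound $(r-3+4q)/2$ that dominates when there are many branch points. Getting the constants exactly right—in particular verifying that the ``$-3$'' and the factor $4q$ arise correctly from separating one term of the sum and reinserting the $2q-2$ from the base genus—will require a careful bookkeeping of which term is removed and a check that the bound remains valid in the degenerate cases $r$ small or $q=0$. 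Once this combinatorial optimization is pinned down, the inequality for $m_i$ follows by routine algebra, and together with (b) and (c) it guarantees that only finitely many types $(q;m_1,\ldots,m_r)$ can occur, which is precisely what makes the ensuing classification algorithm terminate.
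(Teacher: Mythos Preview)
Parts (a) and (b) are fine and match the paper's argument. The real issues are in (c) and (d).

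For (c), your justification is not a proof, and in fact the heuristic you offer is false in general. Knowing that the fibre of $C\to C'$ over $p_i$ has $|G^0|/m_i$ points does \emph{not} force $m_i\mid g(C)-1$: take the hyperelliptic double cover of $\mathbb{P}^1$ by a genus~$2$ curve, where $m_i=2$ but $g(C)-1=1$. What makes (c) true here is precisely the semi-isogenous hypothesis, which you never invoke. The paper's argument is: pick $h_i\in G^0$ of order $m_i$ fixing some $x\in C$; since $G^0$ acts freely on $C\times C$, the element $\varphi(h_i)$ cannot fix any point of $C$, so $C\to C/\langle\varphi(h_i)\rangle$ is an \'etale cover of degree $m_i$, and Hurwitz gives $m_i\mid g(C)-1=\beta$. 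This use of the disjointness condition $\Sigma_V\cap\varphi(\Sigma_V)=\{1\}$ is the missing idea.

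For (d), the inequality you write, $\Theta\geq 2q-2+M+(m_i-1)/m_i$, is incorrect when $q\geq 1$: removing one term leaves $r-1$ summands each $\geq 1/2$, giving $\sum_{j\neq i}(m_j-1)/m_j\geq (r-1)/2$, which is \emph{smaller} than $M=(r-3+4q)/2$ as soon as $q\geq 1$. The correct inequality (cited in the paper from \cite{FP15}) is $M\leq\Theta+1/m_i$, obtained by moving the $1/m_i$ rather than the $(m_i-1)/m_i$ to the other side. More importantly, the passage from $M\leq\Theta+1/m_i$ to the stated bound on $m_i$ is not ``routine algebra'': multiplying by $m_i$ gives $M m_i\leq 1+\Theta m_i$, and to bound $\Theta m_i$ by $\Theta\beta=2(10\chi-K^2)$ you need $m_i\leq\beta$, which is exactly part (c). So (d) depends on (c), and your sketch of (d) both has the wrong inequality and omits this dependence.
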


\begin{proof}
a) Since $q(X)=g(C/G^0)$, by Hurwitz's formula:
 $
  2(g(C)-1)=|G^0|\cdot \Theta,
 $
hence $\Theta=\frac{2(g-1)}{|G^0|}>0$. The equation $\beta=g(C)-1$ follows now from equation (\ref{G0_N}).

b) By definition $\Theta\geq 2q-2+\frac{r}{2}$, whence $r\leq 2\cdot \Theta +4(1-q)$.

c) By Riemann's Existence Theorem, there exists an element $h\in G^0$ of order $m_i$ such that $h\cdot x=x$ for some $x\in C$.
Since $h\in G^0$ does not fix any point in $C\times C$, it holds $\varphi(h)\cdot y\neq y$ for all $y\in C$.
Thus the map $C\to C/\langle\varphi(h)\rangle=:\tilde C$ is \'etale of degree $\ord(\varphi(h))=m_i$.
By Hurwitz's formula $2(g(C)-1)=2m_i(g(\tilde C)-1)$.

d) By \cite[Proposition 5.4]{FP15}, $M \leq\Theta +1/m_i$. By part c) we have  $\Theta m_i \leq \Theta \beta$, and 
 by definition of $\beta$ we get
\[M\cdot m_i \leq 1+ \Theta\cdot m_i \leq 1+2(10\chi(\mathcal O_X)-K^2_X).\]
\end{proof}

\begin{remark}\label{finiteness} Let $X:=(C\times C)/G$ be a semi-isogenous mixed surface;
we have an upper bound for $\beta$ given by 
 \[
  \beta=\frac{2(10\chi(\mathcal O_X)-K^2_X)}{\Theta}\leq\frac{2(10\chi(\mathcal O_X)-K^2_X)}{\Theta_{min}},
 \]
where
\[
 \Theta_{min} =
\begin{cases}
1/42 	&  \mbox{ if } q=0, \\
1/2	& \mbox{ if } q=1, \\
2q-2	& \mbox{ if }  q\geq 2.
\end{cases}
\]
This shows that, once we fixed the invariants  $K_X^2$, $p_g(X)$ and $q(X)$, the classification problem
becomes finite.
\end{remark}

We have also the following restrictions for the $m_i$'s.

\begin{proposition}\label{bounds2}
Let  $X:=(C\times C)/G$ be a semi-isogenous mixed surface, 
$(q;m_1,\ldots,m_r)$ be the type of an induced generating vector for $G^0$ and suppose  $|O_2|>0$. 
 
 Then  $m_i \leq |G^0|/|O_2|$ and $m_i$ divides $|O_2|$  for all  $i=1,\dots,r$.
 
\end{proposition}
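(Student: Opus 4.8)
The plan is to read off both statements from a single vertical fibre of $C\times C$ lying over the $i$-th branch point, by tracking how the ramification curves $R_g$ meet it.

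First I would fix a point $x_0\in C$ lying over the branch point $p_i\in C':=C/G^0$ whose stabilizer in $G^0$ is cyclic of order $m_i$; such a point exists by Riemann's Existence Theorem, since $m_i=\ord(h_i)$ is the order of the local monodromy around $p_i$, and the points of $c^{-1}(p_i)$ (where $c\colon C\to C'$ is the quotient map) carry stabilizers conjugate to $\langle h_i\rangle$. Write $\Stab_{G^0}(x_0)=\langle h\rangle$ with $\ord(h)=m_i$. I then consider the vertical fibre $\Phi:=\{x_0\}\times C$ and intersect it with the ramification locus, which by Proposition \ref{ram_loc} is $R=\bigsqcup_{g\in O_2}R_g$. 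Using $R_g=\{(x,(\tau' g)\cdot x):x\in C\}$ with $\tau' g\in G^0$, each $R_g$ meets $\Phi$ in the single point $(x_0,(\tau' g)x_0)$; since the $R_g$ are pairwise disjoint, the set $S:=\Phi\cap R$ consists of exactly $|O_2|$ distinct points, one on each $R_g$.

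For the divisibility $m_i\mid|O_2|$ I would note that $\langle h\rangle$ preserves $\Phi$ (because $hx_0=x_0$, so $h(x_0,y)=(x_0,\varphi(h)y)$) and preserves $R$ (being the full, $G$-invariant ramification locus), hence acts on the finite set $S$. As $h\in G^0$ and $G^0$ acts freely on $C\times C$, this action is free: if $h^k$ fixes a point of $S$ then $h^k$ fixes a point of $C\times C$, forcing $h^k=1$. Thus every $\langle h\rangle$-orbit in $S$ has cardinality $m_i$, and $m_i$ divides $|S|=|O_2|$.

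For the bound $m_i\le|G^0|/|O_2|$ I would instead inspect the second coordinates of the points of $S$. Each such coordinate is $(\tau' g)x_0$ with $\tau' g\in G^0$, so it lies in the single orbit $G^0\cdot x_0=c^{-1}(p_i)$, a set of cardinality $|G^0|/|\Stab_{G^0}(x_0)|=|G^0|/m_i$. Since the $|O_2|$ points of $S$ all have first coordinate $x_0$, they have pairwise distinct second coordinates, whence $|O_2|\le|G^0|/m_i$, that is $m_i\le|G^0|/|O_2|$. The heart of the argument — and the only place I expect to need care — is the explicit description of $R_g$ as a graph coming from Theorem \ref{teoazione}: it is precisely this that makes $R_g$ meet the vertical fibre transversally in one point and keeps its second coordinate inside the $G^0$-orbit of $x_0$. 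The freeness of the $G^0$-action, the defining hypothesis of a semi-isogenous mixed surface, is what simultaneously guarantees $|S|=|O_2|$ and renders the $\langle h\rangle$-action on $S$ free.
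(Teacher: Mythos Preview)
Your argument is correct and follows essentially the same approach as the paper. The divisibility part is identical: the paper also lets $\langle h_i\rangle$ act freely on the set $V=\{(x_i,(\tau'g)x_i):g\in O_2\}$ (your $S$) to conclude $m_i\mid|O_2|$. For the bound, the paper works with the full $G$-orbit $\hat L=\bigcup_{g\in G}gL$ of the vertical line and compares $k\cdot|O_2|$ ramification points against the $k^2$ lattice points $\{(h_1x_0,h_2x_0)\}$; your version, restricting to the single fibre $\Phi$ and noting that the $|O_2|$ second coordinates lie in the orbit $G^0\cdot x_0$ of size $|G^0|/m_i$, is the same count divided through by $k$ and is a bit cleaner.
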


\begin{proof} 
Let $x_0\in C$ and let $k:=|\Orb(x_0)|$ be the cardinality of its orbit for the $G^0$-action on $C$.
Let $L$ be the curve $L:=\{(x_0,y):y\in C\}$,
the $G$-invariant set $\hat{L}:=\bigcup_{g\in G}{gL}$
 is the union of $2k$ irreducible components, each one isomorphic to $C$: $k$ disjoint ``horizontal'' copies of $C$
  and $k$ disjoint ``vertical'' copies of $C$. 
  Since the action of $G^0$ is free and the elements in $G\setminus G^0$ switch
{horizontal} and {vertical} components of $\hat{L}$,  a ramification point of 
 $\eta|_{\hat{L}} \colon \hat{L}\to\eta(\hat{L})$ belongs to  the set 
 $S:=\{(h_1\cdot x_0, h_2\cdot x_0): h_1,h_2 \in G^0\}$ which has cardinality $k^2$.
On the other side, the ramification locus of $\eta|_{\hat{L}}$ is 
\[\widehat R:=\hat{L}\cap(\bigcup_{g\in O_2}{R_{g}})=\{(h\cdot x_0, (\tau' g )\cdot h \cdot x_0): h\in G^0\,, g\in O_2\}\,,\]
and it has cardinality $k\cdot |O_2|$: 
once  $h\cdot x_0$ is fixed ($k$ choices),  there are $|O_2|$ possibilities for $g$ and each one gives a different point in 
$\widehat R$ by Proposition \ref{ram_loc}.
Thus $k\cdot |O_2|\leq k^2$. 

By Riemann's Existence Theorem,  for each    $i=1,\ldots, r$
there exist
 $ h_i\in G^0$ and  $x_i\in C$ such that $\Stab(x_i)=\langle h_i
 \rangle$ and $\ord(h_i)=m_i$,
 so $k=|\Orb(x_i)|=|G^0|/m_i$. We get $ |O_2| \leq  |G^0|/m_i$.

Now, let   $V:=\{(x_i, (\tau' g)\cdot x_i): g \in O_2 \}$ be the set of ramification points of $\eta$ 
which lie on the vertical line  $\{(x_i,y): y \in C\}$. 
The group $\langle h_i\rangle$ acts faithfully and freely on $V$, 
  indeed 
 \[
  h_i^{\alpha}(x_i, (\tau' g)\cdot x_i)=  (x_i,\varphi(h_i^\alpha)  (\tau' g)\cdot x_i) \neq(x_i, (\tau' g)\cdot x_i) \,, 
  \quad \mathrm{for} \quad \alpha\in\{1,\ldots, m_i-1\}\,,
 \]
 since  $G^0$ acts freely on $C\times C$; whence $m_i $ divides $|V|=|O_2|$.
\end{proof}

\begin{remark}\label{mi_Nj}
Note that the proof of Proposition \ref{bounds2} shows more:  $m_i\leq N_j:=|\Cl(g_j)|$ 
for $i=1,\ldots r$ and $g_j\in O_2$. Indeed, the points in $V$ belong to $m_i$ ramification curves 
 and  $N_j$ is the number of ramification curves mapped onto
the same branch curve.
\end{remark}

\subsection{The Algorithm}

We wrote a MAGMA \cite{MAGMA} script which computes semi-isogenous mixed surfaces (with $g(C)\geq 2$) and fixed 
values of the invariants $p_g$, $q$ and $K^2$.

Here we explain   the strategy of the algorithm, which follows the one used in \cite{FP15};
a commented version of the script can be downloaded from:
\[\mbox{\url{http://www.science.unitn.it/~cancian/}}\,\]

The algorithm  goes as follows: having fixed the values of $K^2$, $p_g$ and $q$, 
by Remark \ref{finiteness}, we have  only finitely many possible types.
Then we  produce the finite list of all types 
$(q;m_1,\ldots,m_r)$ respecting the  conditions in Proposition \ref{bounds1} and  Proposition \ref{bounds2}.

Now, for each types, the orders of $G$ and $G^0$ are computed by 
$|G|=2|G^0|=4(10\chi-K^2)/\Theta^2 $.

Then the script searches, among the finitely many groups of order $|G^0|$, for  groups
having a disjoint pair of generating vector of the prescribed type. 
For these groups, the script checks  their degree $2$ extensions and discards the ones that 
have the wrong number of elements of order 2 and/or do not satisfy the condition of Proposition \ref{bounds2}.

We get a list of quadruples (type, $G^0$, generating vector, extension $G$), each quadruple
gives a family of mixed quotients as explained in Remark \ref{algdata}, 
and all semi-isogenous mixed surfaces with the prescribed invariants are here.
Anyway, in this list there are also surfaces  whose branch locus does 
not correspond to the expected one, then the script  discards them.

Moreover, different generating vectors give deformation equivalent surfaces if they differ by some 
{\it Hurwitz moves} (see \cite{BC04,BCG08}), which are described in great generality in \cite{pen15}. 
The script computes this action on the  generating vectors, and returns only a representative for each 
orbit. Finally, the script computes the fundamental groups of the resulting surfaces,
and, if $q(X)=1$, the genus of the general Albanese fibre too.

\begin{remark}\label{techlim}
The algorithm works for arbitrary values of the invariants $K^2$, $p_g$ and $q$,
 but the implemented MAGMA version has some technical limitations.
  To perform the algorithm, we have to run over all groups of  a given order.
Here we have to use the database of Small Groups, which contains:
\begin{itemize}
\item all groups of order up to 2000, excluding the groups of order 1024;
\item the groups whose order is a product of at most 3 primes;
\item the groups of order dividing $p^6$ for $p$  prime; 
\item  the groups of order $p^n q$, where $p^n$ is a prime-power dividing $2^8$, $3^6$, $5^5$ or $7^4$ and $q$ is a prime different from $p$.
 \end{itemize}
In the other cases we cannot run among the groups of prescribed order and 
the script returns the list of these skipped cases, which have to be studied separately.
\end{remark}

\subsection{Sketch of proof of Theorems \ref{thmA}, \ref{thmB} and \ref{thmC}} \

Let $X:=(C\times C)/G$ be a semi-isogenous mixed surface with $\chi(\mathcal O_X)=1$ and $K_X^2>0$.

If $g(C)\leq 1$,  then $X\cong \mathbb{P}^2$ (see  Section \ref{class01}).

If $g(C)\geq 2$, by Remark \ref{num_br} $X$ has $K_X^2	\leq 8\chi(\mathcal O_X)$,
so the possible values of $K^2_X$ are in $\{1,\dots,8\}$. 

We ran the program for $1\leq K^2\leq 8$ and $0\leq p_g=q\leq 4$.
As expected by the classification results mentioned in the Introduction,
for $p_g=q=4$ the output is empty, while for $p_g=q=3$ we have only one family:
it has $K^2=6$ and is the family of the symmetric products of curves of genus 3, which forms  an 
irreducible connected component of dimension 6 of the moduli space of minimal surfaces of general type.

The other outputs of the program are collected in Tables \ref{q0}, \ref{q1} and \ref{q2}

As mentioned, the surfaces returned by the program may be not all semi-isogenous mixed surfaces
 with the required invariants, since the program  is forced to skip some types, 
 giving rise to groups of large order. The program returns the list of these ``skipped'' cases.
 
For the cases $p_g=q\not =0$, this list is empty. 
We report the list of the ``skipped'' cases for $p_g=q=0$ in Table \ref{tabSkip}.
Some of them can be excluded by arguments very similar to the analogous 
proofs in the papers \cite{BCGP12, BP15, Frap13, FP15}, but for others, the group 
order is too large and these arguments cannot be applied.

\begin{table*}[!t]  \caption{The skipped cases for $p_g=q=0$ and $K^2 > 0$}\label{tabSkip}
{\small
\begin{minipage}{0.3\textwidth}
\begin{tabular}{|c|c|c|}
\hline
  type  &$|G^0|$& $K^2_S$\\
\hline
 2, 3, 7& 21168 &7\\
 2, 3, 8 & 6912 &7\\
 2, 4, 5& 4800 &7\\
 2, 3, 9& 3888 &7\\
 2, 3, 10& 2700&7 \\
\hline
 2, 3, 7& 28224 &6\\
  2, 3, 8& 9216 &6\\
 2, 4, 5& 6400 &6\\
 2, 3, 9&5184 &6\\
 2, 3, 10& 3600 &6\\
 2, 3, 12& 2304 &6\\
 2, 4, 6& 2304 &6\\
 3, 3, 4& 2304 &6\\
 2, 4, 8& 1024 &6\\
 \hline
 2, 3, 7& 35280&5\\
 2, 3, 8& 11520 &5\\
 2, 4, 5& 8000 &5\\
 2, 3, 9& 6480 &5\\
 2, 3, 10& 4500 &5\\
 2, 3, 12& 2880&5\\ 
 2, 4, 6& 2880 &5\\
 3, 3, 4& 2880 &5\\
  \hline
\end{tabular}
     \end{minipage}
     \begin{minipage}{0.3\textwidth}
\begin{tabular}{|c|c|c|}
\hline
  type & $|G^0|$& $K^2_S$ \\
\hline
 2, 3, 7& 42336 &4\\
 2, 3, 8& 13824 &4\\
 2, 4, 5& 9600 &4\\
 2, 3, 9& 7776 &4\\
 2, 3, 10& 5400 &4\\
 2, 3, 12& 3456 &4\\
 2, 4, 6& 3456 &4\\
3, 3, 4& 3456 &4\\
 2, 3, 14& 2646 &4\\
  2, 5, 5& 2400 &4\\
\hline
 2, 3, 7& 49392&3\\
 2, 3, 8& 16128 &3\\
 2, 4, 5& 11200 &3\\
 2, 3, 9& 9072 &3\\
 2, 3, 10& 6300 &3\\ 
 2, 3, 12& 4032 &3\\
 2, 4, 6& 4032 &3\\
 3, 3, 4& 4032 &3\\
 2, 5, 5& 2800 &3\\
 2, 3, 18& 2268 &3\\
\hline
\end{tabular}
     \end{minipage}     
     \begin{minipage}{0.3\textwidth}
\begin{tabular}{|c|c|c|}
\hline
  type  &$|G^0|$& $K^2_S$\\
\hline
 2, 3, 7& 56448 &2\\
 2, 3, 8&18432 &2\\
 2, 4, 5& 12800 &2\\
 2, 3, 9& 10368 &2\\ 
 2, 3, 10& 7200 &2\\
 2, 3, 12& 4608 &2\\
 2, 4, 6& 4608 &2\\
 3, 3, 4& 4608 &2\\
 2, 3, 14& 3528 &2\\
 2, 5, 5& 3200 &2\\
 2, 3, 18& 2592 &2\\
 2, 4, 8& 2048 &2\\

\hline
 2, 3, 7& 63504 &1\\
 2, 3, 8& 20736 &1\\
 2, 4, 5& 14400 &1\\  
 2, 3, 9& 11664 &1\\  
 2, 3, 10& 8100 &1\\
 2, 3, 12& 5184 &1\\ 
 2, 4, 6& 5184 &1\\
 3, 3, 4& 5184 &1\\   
 2, 5, 5& 3600 &1\\
 2, 3, 18& 2916 &1\\ 
 2, 4, 8& 2304 &1\\
 3, 3, 5& 2025 &1\\
  \hline
\end{tabular}
     \end{minipage}}
   
\end{table*}

Now let us consider the  surfaces in Tables \ref{q0}, \ref{q1} and \ref{q2}.
 A surface with $K^2>0$ is either of general type or rational, 
therefore regular and simply connected: a quick inspection of the tables shows that this latter case does not occur, so all constructed surfaces are of general type.

\section{On the minimality}\label{On_min}

In this section we address the minimality problem for the surfaces listed in Tables \ref{q0}, \ref{q1} and \ref{q2}.

Let  $X:=(C\times C)/G$ be a semi-isogenous mixed surface with $g(C)\geq 2$, and let
$Y:=(C\times C)/G^0$. We denote by $\pi\colon Y\to X$ the covering map, by $\mathcal{B}$ the branch locus of $\pi$ 
and by $\mathcal{R}$ its ramification locus.

\begin{remark}\label{no01curve}

Let $D$ be a (possibly singular) irreducible curve on $Y$.
Let $\tilde{D}$ be the normalization of $D$: there exists a proper map
 $\nu:\tilde{Y}\to Y$ consisting of a finite number of blow-ups 
such that  the strict transform $\tilde{D}$ of $D$ is smooth (see \cite[Theorem II.7.1]{BHPV}).
 We have the following commutative diagram
\begin{equation}\label{norm}
 \xymatrix{
C\times C\ar[d]_\sigma &(C\times C)\times_Y\tilde{Y} \ar[l]_{\gamma_1\qquad}\ar[d]^{\gamma_2}  \\
Y& \tilde{Y}\ar[l]_\nu}
\end{equation}
where $(C\times C)\times_Y\tilde{Y}$ denotes the fiber product and $\gamma_1$ and $\gamma_2$ the 
natural projections; the map  $\gamma_2$ is \'etale because $\sigma$ is \'etale.
Let $D'$ be an irreducible component of  $\gamma_2^{-1}(\tilde{D})$, 
its image $\gamma_1 (D')$ is a curve in $C\times C$, and therefore surjects onto $C$, whence
$g(D')\geq g(C)\geq 2$.  Since  $\gamma_2$ is \'etale, and $D'$ and $\tilde D$ are both smooth, 
 we deduce that  $g(\tilde D) \geq 2$.
\end{remark}

Let $E$ be a smooth rational curve on $X$. %
If $E\cap\mathcal{B}=\emptyset$, then there exists a rational curve in $\pi^{-1}(E)\subset Y$, 
 contradicting Remark \ref{no01curve}.
 By Proposition \ref{br_loc}, $\mathcal{B}$ is a finite union of disjoint curves of genus 
strictly greater than $1$, 
hence $E$ and $\mathcal{B}$ meet in a finite number of points. 
We split these points in two sets accordingly to the parity of their intersection multiplicity:
\[\begin{array}{rlc}
A_0&:=&\{p\in E\cap \mathcal{B}:m_p(E\cap\mathcal{B})\text{ is even}\},\\
A_1&:=&\{p\in E\cap \mathcal{B}:m_p(E\cap\mathcal{B})\text{ is odd}\},
\end{array}\]
where $m_p(E\cap\mathcal{B})$ denotes the intersection multiplicity of $E$ and $\mathcal{B}$ in $p$. 
We define $\mu_0:=|A_0|, \mu_1:=|A_1|$ and $\mu:=\mu_0+\mu_1=|E\cap\mathcal{B}|$.

\begin{lemma}
\label{d1_even} Let  $X:=(C\times C)/G$ be a semi-isogenous mixed surface with $g(C)\geq 2$ and 
$E\subset X$ be a smooth
 rational curve. Then $\mu_1$ is even and $\mu_1\geq 6$.
\end{lemma}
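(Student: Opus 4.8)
The plan is to study the restriction of the double cover $\pi\colon Y\to X$ over $E$ and to compute, via Hurwitz' formula, the genus of the associated cover; Remark \ref{no01curve}, which forbids curves of genus $\leq 1$ on $Y$, will do all the work.

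First I would set $F:=\pi^{-1}(E)$. By Proposition \ref{br_loc} (using $g(C)\geq 2$) the components of $\mathcal B$ have genus $\geq 2$, so $E\not\subseteq\mathcal B$ and $\pi|_F\colon F\to E$ is a finite morphism of degree $2$; in particular $F$ is reduced, being étale of degree $2$ over $E\setminus\mathcal B$. The decisive reduction is that $F$ is irreducible: each irreducible component $F_j$ of $F$ is a curve on $Y$, so by Remark \ref{no01curve} its normalization has genus $\geq 2$ and therefore cannot cover $E\cong\mathbb P^1$ with degree $1$; since the degrees of the $F_j$ over $E$ sum to $2$, there is a single component and it maps with degree $2$.

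Next I would determine the branch locus of the normalized cover $\widetilde F\to E$. Over $E\setminus\mathcal B$ the map $\pi$ is étale, hence unramified; at a point $p\in E\cap\mathcal B$ of intersection multiplicity $m_p$ the cover is locally of the form $y^2=t^{m_p}$ (up to a unit), and after normalization $p$ contributes a ramification point exactly when $m_p$ is odd. Thus $\widetilde F\to E$ is branched precisely over $A_1$, that is, at $\mu_1$ points, each with ramification index $2$. Hurwitz' formula for this degree $2$ map gives
\[
2g(\widetilde F)-2=2(2\cdot 0-2)+\mu_1=\mu_1-4 ,
\]
so $\mu_1=2g(\widetilde F)+2$; in particular $\mu_1$ is even.

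Finally, applying Remark \ref{no01curve} to the irreducible curve $F\subset Y$ yields $g(\widetilde F)\geq 2$, whence $\mu_1=2g(\widetilde F)+2\geq 6$. The step I expect to require the most care is the local analysis at the points of $E\cap\mathcal B$: one must check that even intersection multiplicities split into two branches that become unramified after normalization, while odd multiplicities produce genuine ramification. The possible presence of quotient singularities of $Y$ along $F$ is not an obstacle, since Remark \ref{no01curve} is stated for arbitrary, possibly singular, irreducible curves on $Y$.
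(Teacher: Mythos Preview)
Your argument is correct and follows essentially the same route as the paper's proof: pull back $E$ to an irreducible curve on $Y$, analyse the double cover locally at the points of $E\cap\mathcal B$ to see that only odd-multiplicity points survive as ramification after normalization, and conclude via Hurwitz together with Remark \ref{no01curve}. The only cosmetic differences are that the paper carries out the normalization by explicit blow-ups of $\{z^2=w^k\}$ and tracks Euler characteristics rather than applying Hurwitz directly to $\widetilde F\to E$; also note that in this setting $Y$ is actually smooth (since $G^0$ acts freely), so your final caveat about quotient singularities is unnecessary.
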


\begin{proof}
Let $D:=\pi^{-1}(E)\subset Y$. By Hurwitz's formula $e(D)=2e(E)-\mu=4-\mu$. 
The map $\pi|_D:D\to E$ is finite of degree $2$. In particular, if $D$ is reducible, $D=D_1+D_2$ with $\pi|_{D_i}:D_i\to E$ biregular, contradicting Remark \ref{no01curve}. So $D$ is irreducible.

Since $\pi$ is a local isomorphism out of $\mathcal{R}$, the singularities of $D$ lie on $\mathcal{R}$. 
Let us fix $p\in E\cap\mathcal{B}$, and suppose $m_p(E\cap\mathcal{B})=k$, then 
we can take local coordinates $(x,y)$ centred in  $p$ such that $\mathcal{B}=\{x=0\}$ and $E=\{x=y^k\}$.
 Denoting by $(z,w)$ local coordinates  centred in  $p'=\pi^{-1}(p)$, the local expression of the map 
 $\pi\colon Y\to X$ is $(z,w)\mapsto(z^2,w)$:  $\mathcal{R}=\{z=0\}$ and $D=\{z^2=w^k\}$. 
 This means that $p'$ is a singular point if and only if $k\geq 2$.

The blow-up of $Y$ in $p'$ is given on a chart (say $V_1$) by $(x_1, y_1)\mapsto(x_1y_1, y_1)$, 
and  on the other chart (say $W_1$)  by
 $(u_1, v_1)\mapsto(u_1, u_1v_1)$ and the glueing $V_1\to W_1$ is given by 
 $(x_1, y_1) \mapsto (x_1y_1, x_1^{-1})$.
So the strict transform $D_1$ of $D$ on $V_1$ is $\{x_1^{2}=y_1^{k-2}\}$, while on $W_1$ is given by $\{u_1^{k-2}v_1^k=1\}$. 

If $k-2\geq 2$ then $D_1$ has a singular point and we blow up again, otherwise $D_1$ is smooth and we stop.
According to the parity of $k$,  we get eventually either  $x_n^2=1$ or $x_n^2=y_n$; thus, if $k$ is even  there are two points on the strict transform $D_n$ lying over $p'$, otherwise there is only one point over $p'$. 
Repeating this process for each  singular points of $D$, we get that for the normalization $\tilde D$ of $D$ it holds $e(\tilde{D})=e(D)+\mu_0=4-\mu_1$, whence  $2-2g(\tilde{D})=4-\mu_1$, i.e.~$\mu_1=2g(\tilde{D})+2\geq 6$,
where the inequality follows by Remark \ref{no01curve}.
\end{proof}

\begin{proposition}\label{EB}
Let  $X:=(C\times C)/G$ be a semi-isogenous mixed surface with $g(C)\geq 2$ and 
$E\subset X$ be a smooth
 rational curve.  Then  $E.\mathcal{B}$ is even and $E.\mathcal{B}\geq 6$.
\end{proposition}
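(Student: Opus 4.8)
The plan is to obtain this as an immediate consequence of Lemma \ref{d1_even}, by expanding the intersection number into a sum of local multiplicities and tracking parities. First I would observe that, by Proposition \ref{br_loc}, the branch locus $\mathcal B$ is a finite disjoint union of curves of genus at least $2$; in particular the rational curve $E$ cannot be a component of $\mathcal B$, so $E$ and $\mathcal B$ meet in only finitely many points and the intersection number decomposes as
\[ E.\mathcal B = \sum_{p\in E\cap\mathcal B} m_p(E\cap\mathcal B) = \sum_{p\in A_0} m_p(E\cap\mathcal B) + \sum_{p\in A_1} m_p(E\cap\mathcal B), \]
where $E\cap\mathcal B = A_0 \sqcup A_1$ is the partition into points of even and odd local intersection multiplicity introduced just before Lemma \ref{d1_even}.

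For the parity claim, I would note that the first sum is even, each of its summands being even by the very definition of $A_0$. The second sum consists of $\mu_1 = |A_1|$ summands, each of which is odd, so its parity coincides with that of $\mu_1$; since $\mu_1$ is even by Lemma \ref{d1_even}, this second sum is also even, and hence so is $E.\mathcal B$. For the lower bound, I would simply use that every local multiplicity is a positive integer, so
\[ E.\mathcal B \;\geq\; \sum_{p\in A_1} m_p(E\cap\mathcal B) \;\geq\; |A_1| \;=\; \mu_1 \;\geq\; 6, \]
again invoking Lemma \ref{d1_even}.

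I do not expect any real obstacle here: the entire geometric input — the blow-up analysis establishing that the odd-multiplicity intersection points are even in number and at least six — has already been carried out in the proof of Lemma \ref{d1_even}. The present statement is essentially a bookkeeping consequence, the only point worth stating explicitly being the finiteness of $E\cap\mathcal B$, which rules out $E$ being a component of the higher-genus branch locus and lets the intersection number be read off as a sum of local multiplicities.
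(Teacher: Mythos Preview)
Your proposal is correct and follows exactly the paper's approach: decompose $E.\mathcal{B}$ as the sum of local multiplicities over $A_0$ and $A_1$, then invoke Lemma~\ref{d1_even}. The paper's proof is in fact just the displayed decomposition followed by ``by Lemma~\ref{d1_even}, we get the claim'', so you have simply written out the parity and lower-bound bookkeeping that the paper leaves to the reader.
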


\begin{proof}

Being
\[E.\mathcal{B}=\sum_{p\in E\cap\mathcal{B}}{m_p(E\cap\mathcal{B})}=\sum_{p\in A_0}{m_p(E\cap\mathcal{B})}+\sum_{p\in A_1}{m_p(E\cap\mathcal{B})},
\]
by Lemma \ref{d1_even}, we get the claim.
\end{proof}

We recall the following.

\begin{theorem}[Hodge Index Theorem, {\cite[Corollary IV.2.16]{BHPV}}]
\label{hit}
Let $S$ be a smooth surface, $NS(S)$ be its Neron-Severi group and consider 
$NS(S)\otimes_{\mathbb{Z}}\mathbb{R}$ endowed with the quadratic form induced by the intersection product. 
Let $D$ be a divisor on $S$ with $D^2>0$. 
Then the intersection product is negative definite on the orthogonal complement $D^\perp$ of $D$
 in $NS(S)\otimes_{\mathbb{Z}}\mathbb{R}$.
\end{theorem}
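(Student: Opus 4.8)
The displayed statement is the classical Hodge Index Theorem, so the goal is not to produce a new argument but to reconstruct the standard one, whose only substantial content is a signature computation; the assertion as phrased is then elementary bilinear algebra. Write $V:=NS(S)\otimes_{\mathbb Z}\mathbb R$ equipped with the intersection form $Q$. First I would note that $Q$ is \emph{nondegenerate} on $V$: a class in $NS(S)$ lying in the radical is numerically trivial, hence torsion by the N\'eron--Severi theorem, and therefore dies after tensoring with $\mathbb R$. Next, by the Lefschetz theorem on $(1,1)$-classes one has an inclusion $V\subseteq H^{1,1}(S)\cap H^2(S,\mathbb R)$ under which $Q$ is the restriction of the cup-product pairing.

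The heart of the proof is to show that $Q$ has at most one positive eigenvalue on $V$, equivalently that $V$ contains no positive-definite plane. For this I would invoke the Hodge--Riemann bilinear relations \cite{BHPV} for a K\"ahler (e.g.\ ample) class $\omega$: in the Lefschetz decomposition $H^2(S,\mathbb C)=\mathbb C\omega\oplus H^2_{\mathrm{prim}}$ the cup product is positive on $\mathbb C\omega$, because $\int_S\omega^2>0$, and negative definite on the primitive part of $H^{1,1}(S)$, so the form has exactly one positive direction on $H^{1,1}(S)\cap H^2(S,\mathbb R)$. Any positive-definite subspace of $V$ is a positive-definite subspace of this ambient space, whence its dimension is at most $1$. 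Since $V$ already contains an ample class of positive square, the nondegenerate form $(V,Q)$ has signature exactly $(1,\rho-1)$, where $\rho=\dim_{\mathbb R}V$. If one prefers to avoid transcendental input, the same signature can be obtained purely algebraically: for an ample $H$ and a divisor $D$ with $D\cdot H=0$ and $D\not\equiv 0$ one shows $D^2<0$ via asymptotic Riemann--Roch together with the boundedness of curves of bounded $H$-degree, which is the route taken by Hartshorne.

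Granting that $(V,Q)$ is nondegenerate of signature $(1,\rho-1)$, the theorem is formal. If $D^2>0$ then $Q(D,D)>0$, so $\mathbb R D$ is a nondegenerate line and one has the orthogonal splitting $V=\mathbb R D\oplus D^{\perp}$. Because signatures are additive over orthogonal direct sums, $\mathrm{sig}(D^{\perp})=\mathrm{sig}(V)-\mathrm{sig}(\mathbb R D)=(1,\rho-1)-(1,0)=(0,\rho-1)$; that is, $Q$ is negative definite on $D^{\perp}$, as claimed. The only genuinely nontrivial step is the signature bound of the second paragraph; everything before and after it is bookkeeping and the elementary theory of real quadratic forms.
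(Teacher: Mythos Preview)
The paper does not prove this statement at all: it is quoted verbatim as \cite[Corollary IV.2.16]{BHPV} and used as a black box in the subsequent minimality arguments. Your reconstruction of the standard proof --- nondegeneracy of the intersection form on $NS(S)\otimes\mathbb{R}$, the signature $(1,\rho-1)$ via the Hodge--Riemann bilinear relations (or the algebraic alternative you sketch), and then the elementary orthogonal splitting $V=\mathbb{R}D\oplus D^{\perp}$ with additivity of signatures --- is correct and is essentially the proof given in the cited reference, so there is nothing to compare.
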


For a divisor $D$ on $S$, we denote by $[D]$ its class
 in the vector space $NS(S)\otimes_{\mathbb{Z}}\mathbb{R}$.

\begin{lemma}[{\cite[Remark 4.3]{BP12}}]\label{BP}
On a smooth surface $S$ of general type every irreducible curve $C$
with $K_S.C\leq 0$ is smooth and rational.
\end{lemma}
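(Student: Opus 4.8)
The plan is to reduce everything to showing that the arithmetic genus $p_a(C)$ vanishes, since $p_a(C)=0$ is exactly the statement that the irreducible curve $C$ is smooth and rational. The two tools I would combine are the adjunction formula $2p_a(C)-2=C^2+K_S.C$ and the Hodge Index Theorem just recalled (Theorem \ref{hit}). The essential difficulty to keep in mind is that $S$ is only assumed to be of general type, not minimal, so I cannot simply invoke nefness of $K_S$; the idea is to import the required positivity from the minimal model.

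First I would pass to a minimal model $p\colon S\to S_0$. Since $S$ is of general type, $K_0:=K_{S_0}$ is nef and big, i.e.\ $K_0^2>0$. I then set $H:=p^*K_0$, so that $H$ is nef with $H^2=K_0^2>0$, and I write $K_S=H+E$ where $E\ge 0$ is effective and supported on the exceptional locus of $p$. This is the class whose positivity I will play off against $C$.

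The key step is to show that $H.C=0$ for every irreducible curve $C$ with $K_S.C\le 0$. Since $H$ is nef, one inequality $H.C\ge 0$ is automatic. For the reverse I would split into two cases. If $C$ is contracted by $p$, then $p_*C=0$ and the projection formula gives $H.C=K_0.p_*C=0$ at once. If $C$ is not contracted, then $C$ is not a component of $E$, so $E.C\ge 0$, and therefore $H.C=K_S.C-E.C\le K_S.C\le 0$. In both cases $H.C=0$. Now $[C]\ne 0$ in $NS(S)\otimes_{\mathbb Z}\mathbb R$ because an effective curve has positive intersection with an ample class, and $[C]\in H^\perp$; since $H^2>0$, Theorem \ref{hit} forces $C^2<0$.

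Finally I would feed this into adjunction: $2p_a(C)-2=C^2+K_S.C<0$, using $C^2<0$ and $K_S.C\le 0$. Hence $p_a(C)<1$, so $p_a(C)=0$, and $C$ is smooth and rational, as claimed. The only genuine obstacle is precisely the non-minimality of $S$, which I circumvent by pulling back $K_0$ to obtain a nef and big class $H$ orthogonal to $C$; the sign bookkeeping for the correction term $E.C$ handles the non-contracted case, while the contracted case is immediate, and everything else (adjunction and the negative-definiteness in the Hodge Index Theorem) is routine.
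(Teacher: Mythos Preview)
Your argument is correct. The paper does not supply its own proof of this lemma: it is quoted verbatim from \cite[Remark~4.3]{BP12} and used as a black box in the proof of Proposition~\ref{67min}. So there is nothing to compare your approach against within this paper.

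For the record, your proof is the standard one and each step checks out: pulling back $K_{S_0}$ from the minimal model gives a nef and big class $H$ with $K_S=H+E$, $E\ge 0$ exceptional; the case split (contracted versus not contracted) together with nefness of $H$ yields $H.C=0$; the Hodge Index Theorem (Theorem~\ref{hit}) then forces $C^2<0$ since $[C]\ne 0$ lies in $H^\perp$; and adjunction gives $2p_a(C)-2=C^2+K_S.C<0$, hence $p_a(C)=0$, which for an irreducible curve is equivalent to being smooth and rational. The only remark I would add is that the strict inequality $C^2<0$ (rather than $\le 0$) is justified because the intersection form is negative \emph{definite} on $H^\perp$ and $[C]\ne 0$ there; you state this, but it is the one place a careless reader might slip.
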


\begin{proposition}
\label{67min}
Let $X:=(C\times C)/G$ be a semi-isogenous mixed surface of general type with invariants $\chi(\mathcal{O}_X)=1$
and $K^2_X>0$, and let $\rho\colon X\to X_{min}$ be  the projection to its minimal model.
Then
\begin{itemize}
\item for $K^2_X\in\{6,7,8\}$, $\rho$ is the identity map: $X=X_{min}$;
\item for $K^2_X\in\{4,5\}$, $\rho$ is the contraction of at most one $(-1)$-curve;
\item for $K^2_X\in\{2,3\}$, $\rho$ is the contraction of at most two $(-1)$-curves;
\item for $K^2_X=1$, $\rho$ is the contraction of at most three $(-1)$-curves.
\end{itemize}
\end{proposition}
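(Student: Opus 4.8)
The plan is to convert the statement into a single numerical inequality coming from the Hodge Index Theorem. First I would record the intersection numbers on $X$. Since $\chi(\mathcal{O}_X)=1$, equation (\ref{8chi}) gives $\delta(\mathcal B)=8-K_X^2$, and Remark \ref{num_br} yields $K_X.\mathcal B=6\,\delta(\mathcal B)$, $\mathcal B^2=-4\,\delta(\mathcal B)$ and $K_X^2=8-\delta(\mathcal B)$; in particular $\delta(\mathcal B)+K_X^2=8$. If $K_X^2=8$ then $\delta(\mathcal B)=0$, so $\mathcal B=\emptyset$ and Proposition \ref{EB} shows that $X$ carries no smooth rational curve; since any contracted curve is smooth rational by Lemma \ref{BP}, $X$ is already minimal. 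Hence I may assume $\delta(\mathcal B)\ge 1$ (i.e.\ $K_X^2\le 7$) and that the number $n:=K^2_{X_{min}}-K_X^2$ of contracted curves satisfies $n\ge 1$. I first treat the contracted $(-1)$-curves $E_1,\dots,E_n$ as pairwise disjoint and set $\mathcal E:=\sum_{i=1}^n E_i$, so that $\mathcal E^2=K_X.\mathcal E=-n$. Each $E_i$ is smooth rational, so Proposition \ref{EB} gives $E_i.\mathcal B\ge 6$, whence $\mathcal E.\mathcal B\ge 6n$.

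The key step is to feed $\Phi_t:=\mathcal B+t\,\mathcal E$ into the Hodge Index Theorem. Since $K_X^2>0$, Theorem \ref{hit} applied to $D=K_X$ gives, for every $t\in\mathbb R$,
\[
F(t):=(K_X.\Phi_t)^2-K_X^2\,\Phi_t^2\ \ge\ 0 .
\]
Expanding with the numbers above, $F$ is a quadratic in $t$ with positive leading coefficient $n(n+K_X^2)$, so $F\ge 0$ on all of $\mathbb R$ is equivalent to its discriminant being non-positive, that is
\[
\bigl(12\,\delta(\mathcal B)\,n+2K_X^2\,(\mathcal E.\mathcal B)\bigr)^2\ \le\ 128\,n\,(n+K_X^2)\,\delta(\mathcal B)\bigl(\delta(\mathcal B)+1\bigr).
\]
Using $\mathcal E.\mathcal B\ge 6n$ together with $\delta(\mathcal B)+K_X^2=8$, the left-hand side is bounded below by $\bigl(12n\,(\delta(\mathcal B)+K_X^2)\bigr)^2=9216\,n^2$. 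Cancelling $128n$ leaves the clean inequality
\[
72\,n\ \le\ \delta(\mathcal B)\bigl(\delta(\mathcal B)+1\bigr)\bigl(n+K_X^2\bigr)=(8-K_X^2)(9-K_X^2)\bigl(n+K_X^2\bigr).
\]

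It then remains to solve this for each $K_X^2\in\{1,\dots,7\}$: for $K_X^2=7,6$ it forces $n=0$; for $K_X^2=5,4$ it gives $n\le 1$; for $K_X^2=3,2$ it gives $n\le 2$; and for $K_X^2=1$ it gives $n\le 3$. Together with the case $K_X^2=8$ this is exactly the statement.

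The step I expect to be the main obstacle is the reduction to pairwise disjoint $(-1)$-curves. The exceptional locus of $\rho$ is a negative definite configuration of smooth rational curves, and when infinitely near centres occur it contains $(-2)$-curves, so its components need not be disjoint and the clean values $\mathcal E^2=K_X.\mathcal E=-n$ fail. I would handle this by rerunning the same Hodge-index computation with the reduced exceptional divisor $\mathcal E=\sum_i\Gamma_i$, using that every component still satisfies $\Gamma_i.\mathcal B\ge 6$ by Proposition \ref{EB} and that $K_X.\Gamma_i=-2-\Gamma_i^2$; the point to verify is that, among all such configurations, the family of disjoint $(-1)$-curves is the one least constrained by the discriminant inequality, so that the bound above is the worst case and therefore holds for the actual number $n$ of contracted curves.
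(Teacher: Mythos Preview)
Your approach is the same Hodge Index idea as the paper's, packaged more uniformly. The paper builds the Gram matrix of $K_X,\mathcal B,E_1,\dots$ and bounds each $n_i=E_i.\mathcal B$ from the sign of the determinant, adding one exceptional curve at a time; you instead test the single class $\mathcal B+t\,\mathcal E$ against $K_X$ and read off a uniform inequality from the discriminant of the resulting quadratic in $t$. Your computation is correct and gives exactly the stated bounds.

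The obstacle you flag at the end has a cleaner resolution than the one you propose. Rather than passing to the reduced exceptional divisor and then arguing that disjoint $(-1)$-curves are the ``least constrained'' configuration (which you do not prove), use the total transforms. Write $\rho$ as a composition of $n$ blow-downs and let $\epsilon_1,\dots,\epsilon_n\in NS(X)$ be the total transforms of the successive exceptional divisors. These classes satisfy $\epsilon_i.\epsilon_j=-\delta_{ij}$ and $K_X.\epsilon_i=-1$ on the nose, so with $\mathcal E:=\sum_i\epsilon_i$ you still have $\mathcal E^2=K_X.\mathcal E=-n$. Each $\epsilon_i$ is represented by an effective divisor supported on smooth rational curves (and no component of $\mathcal B$ is rational), so Proposition~\ref{EB} applied to its components gives $\epsilon_i.\mathcal B\ge 6$, hence $\mathcal E.\mathcal B\ge 6n$. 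Your quadratic computation then goes through verbatim in the general case, with no configuration analysis needed. This is precisely what the paper is doing when, faced with a $(-2)$-curve $E_2$ meeting a $(-1)$-curve $E_1$, it replaces $E_2$ by $E_1+E_2$: that sum is exactly the total transform of the second contracted point.
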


\begin{proof} 
Let $E_1\subset X$ be a $(-1)$-curve. By Lemma \ref{BP}, $E_1$ is smooth, hence $n_1:=E_1.\mathcal{B}\geq 6$.
Let $W$ be the subspace of $NS(X)\otimes_{\mathbb{Z}}\mathbb{R}$ generated by $[K_X],[\mathcal{B}],[E_1]$.
Let us consider the matrix
\[
M_1:=
\begin{pmatrix}
 K_X^2 &  K_X.\mathcal B&  K_X.E_1 \\
 K_X.\mathcal B & \mathcal B^2 & E_1. \mathcal B \\
 K_X.E_1&  E_1.\mathcal B&E_1^2
\end{pmatrix}
=
\begin{pmatrix}
 K_X^2 &  6(8-K_X^2)&-1 \\
 6(8-K_X^2) & -4(8-K_X^2) & n_1 \\
  -1&n_1 & -1
\end{pmatrix}\,,
\]
it has determinant $\det M_1= -K_X^2 n_1^2-12(8 -K_X^2)n_1+4(8-K_X^2)(73-8K^2_X)$.
As quadratic polynomial in $n_1$, $\det M_1$ has roots ($x_1\leq x_2$):
\[ x_{1,2}= \dfrac{-6(8-K_X^2)\pm \sqrt{36(8-K_X^2)^2+K_X^2(8-K_X^2)(73-8K_X^2)}}{K_X^2}
\]
and it is easy to see that $x_1\leq 0 \leq x_2$.  
Since $K_X^2>0$, by  Theorem \ref{hit} we have $ \det M_1\geq 0$,
and the leading coefficient $-K_X^2$ is negative hence $6\leq n_1\leq \lfloor x_2\rfloor$.
For $K^2_X>0$, the round-down  of  $x_2$  is: \setlength{\arraycolsep}{5pt}
\begin{equation}\label{N1}
\begin{array}{c|c|c|c|c|c|c|c|c}
K_X^2 & 8&7&6&5&4&3&2&1\\ \hline
\lfloor x_2\rfloor & 0  &2&4 & 6& 8&10 &13&17
\end{array}
\end{equation}
For $K_X^2\in \{6,7,8\}$, it holds $\lfloor x_2\rfloor< 6$, a contradiction, whence 
there are no $(-1)$-curves on $X$.

Assume now $K^2_X<6$ and assume there exists another rational curve  $E_2\subset X$ such that either $E_2^2=-2$ and $E_1.E_2=1$ or $E_2^2=-1$ and   $E_1.E_2=0$.  Up to change $E_2$ with $E_1+E_2$,
the matrix of the intersection form  for  $[K],[\mathcal{B}],[E_1],[E_2]$ is
\[
M_2:=
\begin{pmatrix}
 K_X^2 &  K_X.\mathcal B&  K_X.E_1 &  K_X.E_2\\
 K_X.\mathcal B & \mathcal B^2 & E_1. \mathcal B&   E_2. \mathcal B\\
 K_X.E_1&  E_1.\mathcal B&E_1^2&  E_1.E_2\\
  K_X.E_2&  E_2. B&E_1.E_2& E_2^2
\end{pmatrix}
=
\begin{pmatrix}
 K_X^2 &  6(8-K_X^2)&-1 &-1\\
 6(8-K_X^2) & -4(8-K_X^2) & n_1&n_2 \\
  -1&n_1 & -1&0\\
  -1&n_2& 0 &-1
\end{pmatrix},
\]
where $n_2:=E_2.\mathcal{B}\geq 6$. \\ It has   	
$\det M_2=n_2^2 (1+K^2_X)+n_2 (12(8-K^2_X)-2n_1)+n_1^2(1+K^2_X)+12n_1(8-K^2_X)+8(8-K^2_X)(4K^2_X-37)$.\\
As quadratic polynomial in $n_2$, $\det M_2$ has roots $y_1\leq y_2$.

 Since $K_X^2>0$, by  Theorem \ref{hit} we have $ \det M_2\leq 0$,
and the leading coefficient $1+K_X^2$ is positive hence $6\leq n_2\leq\lfloor y_2\rfloor$.
For $6>K^2_X>0$,  and $n_1\geq 6$ even (Lemma \ref{BP}) and bounded from above by the value
in (\ref{N1}), $y_1$ is negative and the round-down of  $y_2$  is: 
\[
\begin{array}{c|c|c|c|c|c|c|c|c|c|c|c|c|c|c|c|c}
K_X^2 & 5&\multicolumn{2}{c|}{4}&\multicolumn{3}{c|}{3}&\multicolumn{4}{c|}{2}&\multicolumn{6}{c}{1}\\ \hline
n_1 & 6&6&8&6&8&10&6&8&10&12&6&8&10&12&14&16\\
\lfloor y_2\rfloor & -2& 3&-2&6&4& 0&9&7&5&1&12&11&9&7&4&-1\\
\end{array}
\]
For $K_X^2\in \{4,5\}$, it holds $\lfloor y_2\rfloor < 6$, a contradiction, whence 
there is at most one $(-1)$-curve on $X$.

Arguing in the same way, one proves the statements in the remaining cases: $K^2_X\in\{1,2,3\}$.
\end{proof}

\begin{corollary}
Let $X$ be a semi-isogenous mixed surface of general type with $K^2_X=2$ and 
$p_g(X)=q(X)=2$.  Its minimal model $X_{min}$ has  $K^2_{X_{min}}=4$.
\end{corollary}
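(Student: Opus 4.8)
The plan is to squeeze $K^2_{X_{min}}$ between two bounds that are forced to coincide. Since $p_g(X)=q(X)=2$ we have $\chi(\mathcal O_X)=1$, and as $\chi$ is a birational invariant of smooth surfaces, $\chi(\mathcal O_{X_{min}})=1$ as well. The surface $X$ is of general type with $K^2_X=2$, so Proposition \ref{67min} applies and shows that the contraction to the minimal model $\rho\colon X\to X_{min}$ blows down at most two $(-1)$-curves. Writing $k$ for the number of contracted curves and recalling that each blow-down raises the self-intersection of the canonical class by one, we obtain $K^2_{X_{min}}=2+k\le 4$. Hence it suffices to establish the matching lower bound $K^2_{X_{min}}\ge 4$, for then $k=2$ and $K^2_{X_{min}}=4$.

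The decisive step is to show that $X$ has maximal Albanese dimension. Set $C':=C/G^0$, a curve of genus $q(X)=2$, so that (from the proof of Proposition \ref{irreg}) the holomorphic $1$-forms on $X$ are precisely the diagonal copy of $H^0(\Omega^1_{C'})$ inside $H^0(\Omega^1_{C'})^{\oplus 2}$. I would then produce a generically finite morphism from $X$ to the abelian surface $\mathrm{Jac}(C')$. Realizing $\mathrm{Sym}^2(C')=(C\times C)/\Gamma$ with $\Gamma:=(G^0\times G^0)\rtimes\mathbb Z_2$ (the $\mathbb Z_2$ being the coordinate swap), one checks that $G=\langle G^0,\tau'\rangle$ sits inside $\Gamma$ via $g\mapsto(g,\varphi(g))$ and $\tau'=(1,\tau)\circ\mathrm{swap}$; hence the quotient map descends to a finite morphism $\beta\colon X\to\mathrm{Sym}^2(C')$ of degree $[\Gamma:G]=2|G^0|^2/|G|=8$, in particular generically finite. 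Composing $\beta$ with the Abel--Jacobi morphism $\mathrm{Sym}^2(C')\to\mathrm{Jac}(C')$, which is birational in genus $2$, yields a generically finite map $X\to\mathrm{Jac}(C')$ onto a $2$-dimensional abelian variety. By the universal property of the Albanese this factors as $X\xrightarrow{\alpha}\Alb(X)\xrightarrow{h}\mathrm{Jac}(C')$; since $\dim\Alb(X)=q(X)=2$ and the composite is dominant onto a surface, $\alpha$ itself must be generically finite onto its image. Thus $X$, and therefore $X_{min}$, has maximal Albanese dimension.

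With this in hand I would invoke the Severi inequality (Pardini): a minimal surface $S$ of general type of maximal Albanese dimension satisfies $K^2_S\ge 4\chi(\mathcal O_S)$. Applied to $X_{min}$ with $\chi(\mathcal O_{X_{min}})=1$ this gives $K^2_{X_{min}}\ge 4$. Combining with the upper bound $K^2_{X_{min}}\le 4$ from the first paragraph forces $K^2_{X_{min}}=4$, which is the assertion.

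I expect the main obstacle to be the geometric verification of the second paragraph: namely, confirming that the map to $\mathrm{Sym}^2(C')$ is genuinely $G$-invariant and finite of the stated degree, and that the induced map to $\mathrm{Jac}(C')$ really witnesses maximal Albanese dimension (so that Severi's theorem is legitimately applicable). Once that is secured, the conclusion is immediate from Severi's inequality together with Proposition \ref{67min}. An alternative, more computational route would be to exhibit the two $(-1)$-curves on $X$ explicitly and check $k=2$ by hand, but the Albanese-dimension argument is cleaner and sidesteps any case analysis on the branch locus.
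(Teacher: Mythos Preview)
Your argument is correct, but the paper takes a shorter path to the same lower bound. Both proofs use Proposition~\ref{67min} to cap the number of contracted $(-1)$-curves at two, giving $K^2_{X_{min}}\le 4$. For the reverse inequality the paper simply quotes Debarre's inequality \cite{deb82}: any minimal irregular surface of general type satisfies $K^2_S\ge 2p_g(S)$, which with $p_g=2$ already forces $K^2_{X_{min}}\ge 4$. No geometric input beyond $q>0$ is needed.

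Your route instead establishes the stronger geometric fact that $X$ has maximal Albanese dimension, via the natural finite morphism $X=(C\times C)/G\to (C\times C)/\Gamma=\mathrm{Sym}^2(C')$ (your verification that $G$ embeds in $\Gamma=(G^0\times G^0)\rtimes\mathbb Z_2$ through $g\mapsto(g,\varphi(g))$, $\tau'\mapsto(1,\tau)\cdot\mathrm{swap}$ is fine and is exactly the diagram~(\ref{diag}) specialized to $q=2$), composed with the birational Abel--Jacobi map for a genus-$2$ curve. You then invoke Pardini's Severi inequality $K^2\ge 4\chi$. This works, and the intermediate statement that $X$ is of maximal Albanese dimension is of independent interest; but for the bare numerical conclusion it is more machinery than necessary, since Debarre's inequality gives the same bound $4$ without any analysis of the Albanese image. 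Your anticipated ``main obstacle'' is therefore not really an obstacle: the map to $\mathrm{Sym}^2(C')$ is genuine and the argument goes through, it is just that the paper sidesteps it entirely.
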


\begin{proof}
By Debarre's inequality (see \cite{deb82}),
for a minimal irregular surface  of general type $S$ it holds $K^2_S\geq 2 p_g(S)$, thus $X$ is not minimal and
$K^2_{X_{min}}\geq 4$, i.e.~we need to contract at least two $(-1)$-curves.

On the other side, by Proposition \ref{67min} we can  contract at most 
two $(-1)$-curves.
\end{proof}

\subsection{The case $p_g(X)=q(X)=2$ and $K_X^2=4$}
Let $X:=(C\times C)/G$ be a semi-isogenous mixed surface with $K_X^2=4$ and $p_g(X)=q(X)=2$.
The surface $X$ is of general type and, according to Table \ref{q2}, $C$ is a curve of genus $7$, 
$G^0\cong S_3$ and  $G\cong S_3\times\mathbb{Z}_2$: $\mathbb{Z}_2$ acts on $C\times C$ exchanging the
factors. 

By Proposition \ref{67min}, $X$ contains at most a $(-1)$-curve.  We explicitly construct  a $(-1)$-curve on $X$,
thus $X_{min}$ has $K^2_{X_{min}}=5$.

By Proposition \ref{irreg}, $C':=C/G^0$ is a curve of genus 2: it is  hyperelliptic. 
Let $f'\colon C'\to C'$ be the hyperelliptic involution and $c\colon C\to C'$ the projection. 
According to  \cite[Corollary 2]{Acc}, $f'$ lifts to an automorphism $f\in\Aut(C)$,
 i.e.~$f$ satisfies  $c(f(p))=f'(c(p))$ for all $p\in C$. \\
By the uniqueness of the lift, $faf\in S_3$ for all $a\in S_3$; in particular $f^2\in S_3$ and $S_3\triangleleft H:=\langle S_3,f\rangle<\Aut(C)$
 with $H$ of order $12$. The map $C\to C/H\cong\mathbb{P}^1$ ramifies in $36$ points, each one of them with stabilizer generated 
 by an element of order $2$ in $H\setminus S_3$. Let $T:=\{p_i\}_{i=1,\dots,36}$ be the ramification locus of $C\to C/H$.

Since $\Aut(S_3)=\Inn(S_3)$, there exists a unique $\bar{f}\in H\setminus S_3$ such that $\bar{\varphi}(g):=\bar{f}g\bar{f}^{-1}=\varphi(g)$ for all $g\in S_3$. Let $\Gamma:=\{(x,\bar{f}x):x\in C\}\subset C\times C$ be the graph of $\bar{f}$. A direct computation shows that the curve $\Gamma$ is $G$-invariant. Let $\tilde{\Gamma}:=\eta(\Gamma)\subset X$ ; the ramification locus of the map $\eta|_{\Gamma}:\Gamma\to\tilde{\Gamma}$ is $\{(\bar{f}p,\bar{f}^2p):p\in T\}$, and each ramification point has stabilizer of order $2$ generated by an element of $G\setminus G^0$, then, by Hurwitz's formula,
\[
12=2g(\Gamma)-2=12(2g(\tilde{\Gamma})-2)+36,
\]
that is $g(\tilde{\Gamma})=0$.

The canonical divisor $K_{C\times C}$ is numerically equivalent to 
\[2(g(C)-1)F_1+2(g(C)-1)F_2=12F_1+12F_2\,,\] 
where $F_1,F_2$ denote a general fiber of the projections onto the first and onto the second coordinate respectively, 
then $K_{C\times C}.\Gamma=24$. 
By the adjunction formula \[\Gamma^2=2g(\Gamma)-2-K_{C\times C}.\Gamma=-12\,.\]
Finally, since $\eta^*(\tilde{\Gamma})=\Gamma$, the projection formula $\Gamma^2=\eta^*(\tilde{\Gamma})^2=\deg\eta\cdot\tilde{\Gamma}^2$ implies $\tilde{\Gamma}^2=-1$.

\subsection{The cases $p_g(X)=q(X)=0$ and $K_X^2=2$}

Let $X$ be one of the surfaces in Table \ref{q0} with $K_X^2=2$. The order of $H_1(X, \mathbb Z)$ is $32$.

  For a \textit{numerical Campedelli surface} $S$, i.e.~a minimal surface of general type with
  $K^2_S=2$ and $p_g(S)=0$, it is known (cf. \cite{Rei}) that  its  algebraic  fundamental  group 
  is a finite group of order $\leq 9$.
  
As remarked in \cite{BCPsurvey} (see also \cite{PPS10}),
if $H_1(S, \mathbb Z)$ is finite,  it  is isomorphic to the abelianization
of   $\pi_1^{alg}(S)$ and so it has order $\leq 9$,
whence $X$ cannot be minimal.

\subsection{The cases $p_g(X)=q(X)=1$ and $K_X^2=2$}
By \cite{Cat81}, the minimal surfaces of general type with $p_g=q=1$ and $K^2=2$ form a connected component in the moduli space: 
the Albanese map of these surfaces is a genus $2$ fibration, and their fundamental group is isomorphic to $\mathbb Z^2$ (cf.~\cite{FP15}).
We conclude  that  the surfaces in Table \ref{q1} with $K^2=2$  are not minimal.

\noindent{\bf Authors' Adresses:}

{\it Nicola  Cancian}: Dipartimento di Matematica, Universit\`a degli Studi di Trento;\\
Via Sommarive 14; I-38123 Povo (Trento), Italy

{\it Davide Frapporti}: Lehrstuhl Mathematik VIII,  Universit\"at Bayreuth;\\
Universit\"atsstra\ss e 30; D-95447 Bayreuth, Germany

\end{document}